\newtheorem{thm}{Theorem}[section]
\newtheorem{cor}[thm]{Corollary}
\newtheorem{lem}[thm]{Lemma}
\newtheorem{prop}[thm]{Proposition}
\theoremstyle{definition}
\newtheorem{ass}[thm]{Assumption}
\theoremstyle{remark}
\newtheorem{rem}[thm]{Remark}
\newtheorem{exa}[thm]{Example}
\numberwithin{equation}{section}
\newcommand{\norm}[1]{\|#1\|}
\newcommand{\Real}{\mathbb R}
\newcommand{\Natural}{\mathbb N}
\newcommand{\F}{\mathcal{F}}
\newcommand{\cJ}{\mathcal{J}}
\newcommand{\prob}{\mathbb{P}}
\newcommand{\expec}{\mathbb{E}}
\newcommand{\bL}{\mathbb{L}}
\newcommand{\cL}{\mathcal{L}}
\newcommand{\fF}{\mathfrak{F}}
\newcommand{\barA}{\overline{A}}
\newcommand{\uk}{\underline{\kappa}}
\newcommand{\ok}{\overline{\kappa}}
\newcommand{\uc}{\underline{c}}
\newcommand{\oc}{\overline{c}}
\newcommand{\sd}{\mathbb{S}_{++}^d}
\newcommand{\sdall}{\mathbb{S}^d}
\newcommand{\tr}{\mbox{Tr}}
\newcommand{\td}{\tilde{d}}
\newcommand{\indic}{\mathbb{I}}
\newcommand{\pare}[1]{\left(#1\right)}
\newcommand{\bra}[1]{\left[#1\right]}
\newcommand{\cbra}[1]{\left\{#1\right\}}
\newcommand{\wh}[1]{\widehat{#1}}
\newcommand{\qprob}{\mathbb{Q}}
\newcommand{\nada}[1]{}
\newcommand{\espalt}[3]{\expec^{#1}_{#2}\bra{#3}}
\newcommand{\hespalt}[3]{\hat{\expec}^{#1}_{#2}\bra{#3}}
\newcommand{\dfn}{\, := \,}
\title[Large time behavior of solutions to semi-linear equations]{Large time
  behavior of solutions to semi-linear equations with quadratic growth in the gradient}
\thanks{The authors are grateful to Gechun Liang, the anonymous Associate Editor and two referees for their valuable comments which help us improving this paper. S. Robertson is supported in part by the National Science Foundation
  under grant number DMS-1312419.}
\author[]{Scott Robertson}
\address[Scott Robertson]{Department of Mathematical Sciences,
Wean Hall 6113,
Carnegie Mellon University,
Pittsburgh, PA 15213,
USA}
\email{scottrob@andrew.cmu.edu}
\author[]{Hao Xing}
\address[Hao Xing]{Department of Statistics,
London School of Economics and Political Science,
10 Houghton st,
London, WC2A 2AE,
UK}
\email{h.xing@lse.ac.uk}
\begin{document}

\begin{abstract}
 This paper studies the large time behavior of solutions to semi-linear Cauchy
 problems with quadratic nonlinearity in gradients. The Cauchy problem
 considered has a general state space and may degenerate on the boundary of
 the state space. Two types of large time behavior are obtained: i)  pointwise
 convergence of the solution and its gradient; ii) convergence of solutions to
 associated backward stochastic differential equations. When the state space
 is $\Real^d$ or the space of positive definite matrices, both types of
 convergence are obtained under growth conditions on  coefficients. These
 large time convergence results have direct applications in risk sensitive control and long term portfolio choice problems.
\end{abstract}

\keywords{Semilinear equation, quadratic growth gradient, large time behavior, ergodic equation}

\maketitle

\section{Introduction}\label{sec: intro}
Given an open domain $E\subseteq \Real^d$ and functions $A_{ij}$, $\barA_{ij}$, $B_i$, $V$, $i,j=1, \cdots, d$, from $E$ to $\Real$, define the differential operator
\begin{equation}\label{eq: F E}
 \mathfrak{F} \dfn \frac12 \sum_{i,j=1}^d A_{ij} D_{ij} + \frac12
 \sum_{i,j=1}^d \barA_{ij} D_i  D_j + \sum_{i=1}^d B_i D_i  + V,
\end{equation}
where $D_i = \partial_{x^i}$ and $D_{ij}= \partial^2_{x^ix^j}$. We consider the following Cauchy problem:
\begin{equation}\label{eq: pde E}
 \partial_t v = \mathfrak{F}[v], \quad (t,x) \in (0,\infty) \times E, \quad
 v(0,x) = v_0(x).
\end{equation}
Precise conditions on $E$, the coefficients, and the initial condition $v_0$ will be presented later. In particular, these conditions allow for general domains $E$ and for $A=(A_{ij})_{1\leq i,j\leq d}$ to be both unbounded and degenerate on the boundary of $E$. Our goal is to study the large time asymptotic behavior of solutions $v(t,\cdot)$ to \eqref{eq: pde E}.

 The asymptotic behavior of $v(t, \cdot)$ is closely related to the following ergodic analogue of \eqref{eq: pde E}:
\begin{equation}\label{eq: e-eqn E}
\lambda = \mathfrak{F}[v], \quad x\in E,
\end{equation}
whose solution is a pair $(v, \lambda)$ with $\lambda \in \Real$. In our main result, we prove the existence of $(\hat{v}, \hat{\lambda})$ solving \eqref{eq: e-eqn E} such that $h(t,x):= v(t,x) - \hat{\lambda} t - \hat{v}(x)$, $x\in E$, satisfies
\begin{equation}\label{eq: pointwise conv}
 h(t, \cdot) \rightarrow C \quad \text{ and } \quad \nabla h(t, \cdot) \rightarrow 0 \quad \text{ in } C(E) \quad \text{ as } t\rightarrow \infty.
\end{equation}
Here $C$ is a constant, $\nabla = (D_1, \dots, D_d)$ is the gradient, and
convergence in $C(E)$ stands for locally uniform convergence in  $E$. In addition to the previous pointwise convergence, we also obtain the
following probabilistic type of convergence: for any fixed $t\geq 0$, as functions of $x\in E$,
\begin{equation}\label{eq: L2 conv}
\begin{split}
 &\expec^{\prob^{\hat{v}, x}} \bra{\int_0^t \nabla h' \barA \nabla h (T-s, X_s) ds} \rightarrow 0 \quad \text{ and } \quad \expec^{\prob^{\hat{v}, x}} \bra{\sup_{0\leq s\leq t} |h(T,x)- h(T-s, X_s)|} \rightarrow 0,
\end{split}
\end{equation}
in $C(E)$ as $T\rightarrow \infty$.
Here, $\nabla h'$ is the transpose of $\nabla h$ and $(\prob^{\hat{v}, x})_{x\in E}$ are probability measures under which the coordinate process $X$ is ergodic (cf. Proposition \ref{prop: e-existence} below).

The Cauchy problem \eqref{eq: pde E} and its ergodic analog \eqref{eq: e-eqn
  E} are closely related to \emph{risk sensitive} control problems of both
finite and infinite horizon: see  \cite{MR1358100, Bensoussan-Frese-Nagai, Nagai-96, Kaise-Sheu} among
others. Indeed, consider
\begin{equation}\label{eq: finite_horizon_rsc}
\max_{z\in\mathcal{Z}}\frac{1}{\theta}\log\left(\espalt{}{}{\exp\left(\theta\left(v_0(X_T) + \int_0^T c(X_s,z_s)ds\right)\right)}\right),
\end{equation}
where $T>0$ represents the horizon, $\theta\in \Real\setminus\cbra{0}$ is the risk-sensitivity parameter, and $\mathcal{Z}$ is a set of acceptable control processes.  For a given $z\in\mathcal{Z}$, $X$ is an $E$-valued diffusion with dynamics $dX_t = b(X_t,z_t)dt + a(X_t)dW_t, X_0 = x$, where $W$ is a $d$-dimensional Brownian motion and $a$ is a matrix such that $aa' = A$.  With $v$ denoting the value function, the standard dynamical programming argument yields the following Hamilton-Jacobi-Bellman (HJB) equation for $v$:
\begin{equation}\label{eq: finite_horizon_rsc_hjb}
\partial_t v = \frac{1}{2}\sum_{i,j=1}^d A_{ij}(x)D_{ij}v + \sup_{z}\cbra{\frac{\theta}{2}\sum_{i,j=1}^d A_{ij}(x)D_i v D_j v + \sum_{i=1}^d b_i(x,z)D_iv + c(x,z)}.
\end{equation}
When $z\mapsto b(x,z)$ is linear and $z\mapsto c(x,z)$ is quadratic the risk-sensitive control problem is called the \emph{linear exponential quadratic problem} 
and the HJB equation reduces to a semilinear equation of type \eqref{eq: pde E}, where the pointwise optimizer $z$ in \eqref{eq: finite_horizon_rsc_hjb} is a linear function of $\nabla v$ and is expected to yield an optimal control.  The long-run analog to \eqref{eq: finite_horizon_rsc} is obtained by maximizing the growth rate:
\begin{equation}\label{eq: infinite_horizon_rsc}
\max_{z\in\mathcal{Z}}\liminf_{T\rightarrow\infty} \frac{1}{\theta T}\log\left(\espalt{}{}{\exp\left(\theta\int_0^T c(X_s,z_s)ds\right)}\right).
\end{equation}
Here, in the linear exponential quadratic case, the solution $(\hat{v},\hat{\lambda})$ from \eqref{eq: e-eqn E} governs both the long-run optimal control and maximal growth rate for \eqref{eq: infinite_horizon_rsc}, while the long-run optimal control is again a linear function of $\nabla \hat{v}$. Thus, the convergence in \eqref{eq: pointwise conv} implies that the optimal control for the finite horizon problem converges to its long-run analog as the horizon goes to infinity.

The convergence in \eqref{eq: pointwise conv} and \eqref{eq: L2 conv} also has direct applications to long-term portfolio choice problems from Mathematical Finance (cf. \cite{MR1675114, MR1790132, MR1882297, MR1802598, MR1910647, MR1932164, MR1995925, Nagai-03, MR2435642} amongst many others). In particular, solutions to \eqref{eq: pde E} and \eqref{eq: e-eqn E} are the value functions for the \emph{Merton} problem where the goal is to maximize expected utility from terminal wealth (finite horizon) or the expected utility growth rate (infinite horizon) for the constant relative risk aversion (CRRA) utility investor in a Markovian factor model.  As in the risk-sensitive control problem, optimal investment policies are governed by  $\nabla v$ and $\nabla \hat{v}$ respectively and hence \eqref{eq: pointwise conv} implies convergence of the optimal trading strategies as the horizon becomes large.  In fact, through the lens of \emph{portfolio turnpikes} (see \cite{guasoni.al.11} and references therein), which state that as the horizon $T$ becomes large, the optimal polices for a \emph{generic} utility function over any finite window $[0,t]$ converge to that of a CRRA utility, the convergence in \eqref{eq: pointwise conv} identifies optimal policies for a wide class of utilities in the presence of a long horizon.  Here, however,  the validity of turnpike results rely upon the convergence in \eqref{eq: L2 conv} instead of \eqref{eq: pointwise conv} (cf. \cite{guasoni.al.11}). As such, \eqref{eq: L2 conv} is essential for proving turnpike results.

In addition to portfolio turnpikes, the convergence in \eqref{eq: L2 conv} implies convergence of solutions to backwards
stochastic differential equations (BSDE) associated to \eqref{eq: pde E} and
\eqref{eq: e-eqn E}.  This connection is made precise in Remark \ref{rem: bsde}, but the
basic idea is that given solutions $v$ to \eqref{eq: pde E} and
$(\hat{v},\hat{\lambda})$ to \eqref{eq: e-eqn E}, for any $T>0$, one can construct BSDE solutions
$(Y^T,Z^T)$ and $(\hat{Y},\hat{Z})$ to \eqref{eq: bsde_fh} and \eqref{eq: bsde_erg} below, respectively. Then, with $\mathcal{Y}^T:= Y^T-\hat{Y}-\hat{\lambda}(T-\cdot)$ and $\mathcal{Z}^T:= Z^T -\hat{Z}$, \eqref{eq: L2 conv} implies
\[
  \lim_{T\rightarrow \infty} \expec^{\prob^{\hat{v}, x}} \bra{\int_0^t \norm{\mathcal{Z}^T_s}^2 ds} =0 \quad \text{and} \quad \lim_{T\rightarrow \infty} \expec^{\prob^{\hat{v}, x}} \bra{\sup_{0\leq s\leq t} \left|\mathcal{Y}^T_s - \mathcal{Y}^T_0\right|}=0, \quad \text{ for any } t>0.
\]


In the aforementioned applications, several models for $X$ are widely used. In particular, the \emph{Wishart process} (cf. \cite{Bru} and Example \ref{exa: Wishart} below) has been used for option pricing (cf. \cite{Gourieroux06, Gourieroux09, DaFonseca08, DaFonseca10}) and  portfolio optimization (cf. \cite{buraschi2010correlation, Hata-Sekine}) in multi-variate stochastic volatility models. Wishart processes, taking values in the space of positive definite matrices $\sd$, are multivariate generalizations of the square root Bessel diffusion. They offer modeling flexibility, by allowing stochastic correlations between factors, while still maintaining analytical tractability, by keeping the affine structure. However, the volatility of the Wishart process degenerates on the boundary of $\sd$. Therefore, to include this case, our convergence results need to treat domains other than $\Real^d$ and diffusions with coefficients degenerating on the boundary of the state space.

The convergence  \eqref{eq: pointwise conv} has been obtained via stochastic
analysis techniques. \cite{Nagai-96} and \cite{Nagai-03} study large time
asymptotics when the state space is $\Real^d$ and $A$ may degenerate for large
$|x|$, proving a weak form of the convergence in \eqref{eq: pointwise conv},
i.e., $\lim_{t\rightarrow \infty} h(t,\cdot)/t =0$. In \cite{Ichihara-Sheu}, the convergence in \eqref{eq: pointwise conv} has been obtained when the state space is $\Real^d$ and $A$ is the identity matrix. Even though \cite{Ichihara-Sheu} considers uniformly parabolic equations, by appropriately localizing their arguments, we are able to treat degeneracy on the boundary and replace $\Real^d$ by a general domain $E$. This allows us, in Section \ref{sec: abstract conv}, to develop a general framework to study the large time asymptotics in \eqref{eq: pointwise conv} and \eqref{eq: L2 conv}.  One crucial difference between our treatment and \cite{Ichihara-Sheu} lies in proving the comparison result for solutions to \eqref{eq: pde E}. The uniform parabolic assumption is explicitly used in \cite{Ichihara-Sheu}, and their arguments cannot be extended to the locally parabolic case. We replace the uniform parabolic assumption with an assumption on the Lyapunov function (cf. Assumption \ref{ass: long_run strong} below) used to construct solutions $\hat{v}$ to \eqref{eq: e-eqn E}. Additionally, while existing results focused on convergence \eqref{eq: pointwise conv}, the convergence of type \eqref{eq: L2 conv} was missing in the literature, and in general, does not follow from \eqref{eq: pointwise conv} directly without imposing cumbersome integrability assumptions which are hard to check in general settings.

The general framework presented in Section \ref{sec: abstract conv} gives conditions for convergence in terms of two functions $\phi_0$ and $\psi_0$. Once these two functions satisfy appropriate properties, convergence results in Theorems
\ref{thm: conv} and \ref{thm: pointwise conv} follow. When the state space is
specified, $\phi_0$ and $\psi_0$ provide a channel to explicit convergence
results with assumptions only depending upon the model coefficients. Indeed,
when the state space is $\Real^d$ or $\sd$, growth assumptions on model
coefficients are presented which imply the existence of $\phi_0$ and $\psi_0$, hence the main results (cf. Theorems
\ref{thm: conv Rd} and \ref{thm: conv sd}) readily follow. Though the choice of $\phi_0$ and $\psi_0$ depends upon the state space and model coefficients, the procedures to verify their properties are similar. Therefore the general framework developed in Section \ref{sec: abstract conv} could be applied to other domains as well.

In the rest of the paper, Section \ref{sec: proof sec.2} proves convergence results in Section \ref{sec: abstract conv}. Section \ref{sec: sd proof} verifies results specific to $\Real^d$ and $\sd$. Lastly, Appendix \ref{sec: identification} identifies $\sd$ as a subset of $\Real^{d(d+1)/2}$ which allows us to consider equations with $\sd$-valued spatial variables as special cases of \eqref{eq: pde E} and \eqref{eq: e-eqn E}.

Finally, we summarize several notations used throughout the paper:
\begin{itemize}
\item $\mathbb{M}^d$: the space of $d\times d$ real matrices. For $x\in
  \mathbb{M}^d$, let $x'$ be the transpose of $x$, $\tr(x)$ be the trace of
  $x$, and $\norm{x} = \sqrt{\tr(x'x)}$. For $M,N\in
  \mathbb{M}^d$, the Kronecker product of $M$ and $N$ is denoted by $M\otimes N\in \mathbb{M}^{d^2}$.
\item $\sdall$: the space of $d\times d$ symmetric matrices. $\sd$: the space of $d\times d$ strictly
positive definite symmetric matrices. For $M,N\in \sd$, $M\geq N$ when $M-N$ is positive semi-definite.  Given $M\in\sd$, denote by $\sqrt{M}$ the unique $m\in\sd$ such that $m^2=M$.
\item For regions $E\subseteq\Real^d$ and $F\subseteq\Real^k$ and $\gamma\in (0,1]$ denote by $C^{k,\gamma}(E,F)$ the space of $k$ times differentiable functions whose $k^{th}$ derivative is locally H\"{o}lder continuous with exponent $\gamma$.  Write $C^{k,\gamma}(E)$ for $C^{k,\gamma}(E;\Real)$.
\end{itemize}

\section{Main results}\label{sec: abstract conv}
\subsection{Setup}\label{subsec: setup}
We begin by precisely stating assumptions on the region $E$ as well as the
regularity of the coefficients in \eqref{eq: F E}. As for $E$, assume i)  $E\subseteq \Real^d$ is an open
connected domain star shaped with respect to some $x_0\in E$ \footnote{A domain $E\subset \Real^d$ is star shaped for some $x_0\in E$ if for each $x\in E$ the segment $\{\alpha x_0 + (1-\alpha) x; 0\leq \alpha \leq 1\}$ is contained in E. A convex set is star shaped with respect to any of its points.}; ii) there exist a sequence
$(E_n)_{n\in \Natural}$ of open, bounded, connected domains, each star shaped with
respect to $x_0$ and with $C^{2,\gamma}$ boundary for some $\gamma\in (0,1]$ such that $\bar{E}_n\subset
E_{n+1}$ for each $n$; and iii) $E = \cup_{n} E_n$.

Regarding regularity, for $A_{ij}, \bar{A}_{ij}, B_i,V$, $i,j=1,...,d$, in \eqref{eq: F E}, set $A\dfn (A_{ij})_{i,j=1,...,d},  \bar{A}\dfn(\bar{A}_{ij})_{i,j=1,...,d}$, and $B\dfn (B_i)_{i=1,...,d}$.
Assume $A,\bar{A}\in C^{2,\gamma}(E,\sdall)$, $B\in C^{1,\gamma}(E,\Real^d)$ and $V\in
C^{1,\gamma}(E)$ for some $\gamma\in (0,1]$.

A \emph{classical} solution to \eqref{eq: pde E} is a function $v\in
C^{1,2}((0,\infty) \times E)\cap C([0,\infty)\times E)$ which satisfies \eqref{eq: pde E}. A \emph{classical} solution to
\eqref{eq: e-eqn E} is a pair $(v, \lambda)$ such that $v\in C^2(E)$,
$\lambda\in \Real$, and \eqref{eq: e-eqn E} is satisfied.\footnote{Note that $\fF[\phi]
= \fF[\phi + C]$ for any constant $C$. Hence the first component in solutions
to \eqref{eq: e-eqn E} is only determined up to additive constants.}

The following \emph{local} ellipticity assumptions are imposed on \eqref{eq: pde E} and \eqref{eq: e-eqn E}:
\begin{ass}\label{ass: coeff}
 The functions $A$ and $\barA$ satisfy
 \begin{enumerate}
  \item[i)] For any $n \in \Natural$, $x\in E_n$, and $\xi\in \Real^d$, $\xi' A(x) \xi \geq c_n
    |\xi|^2$, for some constant $c_n>0$;
  \item[ii)] There exist constants $\ok\geq \uk >0$ such that
  \[
   \uk A(x)\leq \barA(x) \leq \ok A(x), \quad \mbox{for all} \quad x\in E.
  \]
 \end{enumerate}
\end{ass}
Let us introduce some more notation which will be used throughout the article.  For a fixed $\phi\in
C^{2,\gamma}(E)$, under the aforementioned domain, regularity and ellipticity assumptions, the
\emph{generalized} martingale problem (cf. \cite{Pinsky}) on $E$ for
\begin{equation}\label{eq: L_phi def}
\cL^\phi := \frac12 \sum_{i,j=1}^d A_{ij} D_{ij} + \sum_{i=1}^d\pare{B_i + \sum_{j=1}^d\barA_{ij} D_j \phi} D_i,
\end{equation}
has a unique solution, denoted by $(\prob^{\phi,x})_{x\in
  E}$.   Here, the probability space is the continuous path space $\Omega = C\left([0,\infty);E\right)$.  The coordinate process is denoted by $X$ so
that $X(\omega)_t = \omega_t$ for $\omega\in\Omega$.  The filtration
$\mathbb{F} = (\F_t)_{t\geq 0}$ is the right-continuous enlargement of the
filtration generated by $X$.
When $\phi\equiv 0$ denote $\cL$ for $\cL^0$. Additionally, as a slight abuse of notation, for a given function $v\in C^{1,2}((0,\infty)\times E)$ and $T>0$, define
\begin{equation}\label{eq: L_phi_def_T}
\cL^{v, T-t} := \frac12 \sum_{i,j=1}^d A_{ij} D_{ij} + \sum_{i=1}^d\pare{B_i + \sum_{j=1}^d\barA_{ij} D_j v(T-t,\cdot)} D_i,\quad 0\leq t \leq T.
\end{equation}
As with the time-homogeneous case,  there exists a
unique solution $(\prob^{v,x}_T)_{x\in E}$ on $(\Omega, \F_T)$ to the generalized martingale problem for
$\cL^{v, T-\cdot}$. Both $(\prob^{\phi,x})_{x\in E}$ and $(\prob^{v,x}_T)_{x\in E}$ satisfy the strong
Markov property.  The martingale problem for $\cL^\phi$ (resp. $\cL^{v, T-\cdot}$) is \emph{well-posed} if the coordinate process does not explode $\prob^{\phi, x}$ a.s. (resp. before $T$, $\prob^{v,x}_T$ a.s) for any $x\in E$.


In preparation for the convergence results, let us first establish existence and uniqueness
of classical solutions to \eqref{eq: pde E} and \eqref{eq: e-eqn E}.
For \eqref{eq: e-eqn E},
as in \cite{Kaise-Sheu, Ichihara, Guasoni-Robertson, Ichihara-Sheu}, the following assumption on the \emph{Lyapunov function} helps to construct its solution.

\begin{ass}\label{ass: long_run}

There exists a non-negative $\phi_0 \in C^3(E)$ such that
\begin{equation}\label{eq: barrier function}
\lim_{n\uparrow\infty}\sup_{x\in E\setminus E_n} \mathfrak{F}[\phi_0](x) = -\infty.
\end{equation}

\end{ass}

Given the Lyapunov function $\phi_0$,
the following proposition is a collection of results in
\cite{Kaise-Sheu,Ichihara,Ichihara-Sheu,Guasoni-Robertson}, whose proofs will be discussed briefly in Section \ref{sec: proof sec.2}.

\begin{prop}\label{prop: e-existence}
Let Assumption \ref{ass: coeff} and \ref{ass: long_run} hold. There exists a unique  $\hat{\lambda}\in \Real$ such that the following statements hold:
\begin{enumerate}[i)]
\item There exists a unique (up to an additive constant) $\hat{v}\in C^2(E)$ solving \eqref{eq:
  e-eqn E} with $\hat{\lambda}$ such that
$\left(\prob^{\hat{v},x}\right)_{x\in E}$ is ergodic with an invariant density $\hat{m}$;
\item $\sup_{x\in E}(\hat{v}-\phi_0)(x) < \infty$;
\item $e^{-\uk (\hat{v}-\phi)}\in \bL^1(E,\hat{m})$, for any $\phi\in C^2(E)$
  with $\lim_{n\uparrow \infty} \sup_{x\in E\setminus E_n} \fF[\phi](x)=-\infty$.
\end{enumerate}
\end{prop}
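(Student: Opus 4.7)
My plan is to assemble the proposition from the four cited works, each ingredient having a counterpart in at least one of them. I would proceed in stages, starting from bounded approximations. On each $E_n$ one solves a Dirichlet principal-eigenvalue-type problem: find $(v_n,\lambda_n) \in C^{2,\gamma}(\overline{E_n}) \times \Real$ with $\fF[v_n] = \lambda_n$ in $E_n$ and a convenient boundary condition, normalized by $v_n(x_0) = 0$. Existence follows from a Krein--Rutman argument applied after an exponential-type transformation, or by a fixed-point construction adapted to quadratic-gradient equations, combined with Schauder theory up to the $C^{2,\gamma}$ boundary of $E_n$.

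Passing to the limit $n\to\infty$ is the core analytic step. The Lyapunov hypothesis \eqref{eq: barrier function} furnishes, via a maximum-principle comparison against $\phi_0$, a locally uniform upper bound $v_n \le \phi_0 + c_K$ on each compact $K \subset E$. A Bernstein-type interior gradient estimate (equivalently, a Harnack inequality applied after a suitable exponential transform) then yields locally uniform control of $\nabla v_n$, so the quadratic term $\tfrac12 \bar{A}_{ij} D_i v_n D_j v_n$ is locally bounded. Interior Schauder estimates applied to the resulting linearized equation give $C^{2,\gamma}_{\mathrm{loc}}$ precompactness, and a diagonal subsequence delivers a limit $(\hat{v},\hat{\lambda})$ solving \eqref{eq: e-eqn E}. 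Property (ii) is inherited directly from the comparison bound above.

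Ergodicity of $(\prob^{\hat{v}, x})_{x \in E}$ follows by verifying that $\phi_0$ (or a mild modification) is a Lyapunov function for $\cL^{\hat{v}}$. Indeed, expanding $\cL^{\hat{v}}[\phi_0]$ and using $\fF[\hat{v}] = \hat{\lambda}$ together with \eqref{eq: barrier function} shows $\cL^{\hat{v}}[\phi_0] \to -\infty$ on $E\setminus E_n$ as $n\to\infty$ (after completing the square in $\nabla \hat{v}$ if needed, and exploiting $\uk A \le \bar{A} \le \ok A$). Has'minskii's criteria then give non-explosion, positive recurrence, and a unique invariant density $\hat{m}$, hence ergodicity. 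Uniqueness of $\hat{\lambda}$ is the standard ergodic identification: for two solutions $(\hat{v}_i, \hat{\lambda}_i)$ with ergodic laws, subtracting the equations, integrating against $\hat{m}_1$, and exploiting the convex structure of the quadratic term forces $\hat{\lambda}_1 = \hat{\lambda}_2$, after which uniqueness of $\hat{v}$ up to an additive constant follows by the same argument.

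For (iii) I would apply Ito's formula to $-\uk(\hat{v} - \phi)(X)$ under $\prob^{\hat{v}, x}$ and exploit $\fF[\hat{v}] = \hat{\lambda}$, $\bar{A} \le \ok A$, and $\sup_{E \setminus E_n} \fF[\phi] \to -\infty$: the drift of $\exp(-\uk(\hat{v} - \phi)(X_t))$ then becomes negative off a sufficiently large compact set, producing a supermartingale there and hence a uniform-in-$t$ bound on $\expec^{\prob^{\hat{v}, x}}\bra{\exp(-\uk(\hat{v} - \phi)(X_t))}$. The ergodic theorem converts this time-averaged bound into the stated $\bL^1(E, \hat{m})$ integrability. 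The hard part will be the limiting step for existence: the quadratic gradient nonlinearity requires a locally uniform $C^1$ bound (via a Bernstein estimate or a carefully chosen exponential transform) before Schauder theory applies, and this is precisely where Assumption \ref{ass: long_run} enters essentially. Property (iii), while technically delicate, goes through routinely once the Lyapunov apparatus of the previous stages is in hand.
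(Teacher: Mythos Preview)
Your overall architecture matches the paper's: part~(i) is assembled from the cited works by solving approximating problems on $E_n$, extracting compactness via interior gradient and Schauder estimates, and verifying ergodicity through a Has'minskii-type Lyapunov argument. Two points deserve correction or comparison.

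First, in your sketch of~(iii) you invoke the upper bound $\barA\le\ok A$; this is the wrong half of Assumption~\ref{ass: coeff}(ii). The differential inequality that drives the argument is, with $w=e^{-\uk(\hat v-\phi)}$,
\[
\cL^{\hat v}w \;\le\; \uk\,w\,\bigl(\fF[\phi]-\hat\lambda\bigr),
\]
and a direct computation shows the error term is $\tfrac{\uk}{2}\nabla(\hat v-\phi)'(\uk A-\barA)\nabla(\hat v-\phi)$, which is nonpositive precisely because $\barA\ge\uk A$. With the correct inequality in hand, the paper proceeds slightly differently from you: it only extracts the cruder bound $\cL^{\hat v}w\le M$ on all of $E$ (since $\fF[\phi]-\hat\lambda$ is bounded above), obtains the linear-in-$t$ estimate $\hat\expec^x[w(X_t)]\le C+Mt$, and then feeds this into the time-averaging argument of \cite[Proposition~2.4]{Ichihara-Sheu}. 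Your route---exploit that the drift is strictly negative off a compact set to get a genuinely uniform-in-$t$ bound, then apply the ergodic limit directly---also works and is arguably cleaner, but then no time-averaging is needed, so your final sentence conflates the two approaches.

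Second, for~(ii) you say the bound is ``inherited directly'' from the locally uniform comparison $v_n\le\phi_0+c_K$. As stated this gives only a local bound; statement~(ii) requires $\sup_E(\hat v-\phi_0)<\infty$, so the constant must be independent of the compact set. The paper does not attempt to extract this from the approximation at all: it instead applies the same differential inequality above (with $\phi=\phi_0$, $\psi=\hat v$) and invokes \cite[Theorem~2.2, (i)$\Rightarrow$(iv)]{Ichihara}, which yields the global bound a~posteriori from ergodicity. Your approach can be made to work if you argue the maximum-principle comparison globally on $E_n$ (using $\phi_0$ as a supersolution barrier with a constant independent of $n$), but this requires more than you have written.
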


The following assumption enables construction of both super and sub-solutions to \eqref{eq: pde E}, which in turn establishes existence of solutions to \eqref{eq: pde E}.

\begin{ass}\label{ass: wellpose Lphi0}

For $\phi_0$ as in Assumption \ref{ass: long_run}, the martingale problem for $\cL^{\phi_0}$ is well-posed.

\end{ass}

\begin{prop}\label{prop: pde existence}
 Let Assumptions \ref{ass: coeff}, \ref{ass: long_run}, and \ref{ass: wellpose
   Lphi0} hold. For any $v_0$ satisfying
\begin{equation}\label{eq: v0 bound}
\sup_{x\in E} (v_0-\phi_0)(x)<\infty,
\end{equation}
there exists at least one solution $v\in C^{1,2}((0,\infty) \times E) \cap C([0,\infty) \times E)$ solving \eqref{eq: pde E} such that
 \begin{equation}\label{eq: growth v}
 \sup_{(t, x)\in [0,T]\times E} (v(t,x) - \phi_0(x)) <\infty, \quad  \text{ for each } T\geq 0.
 \end{equation}
\end{prop}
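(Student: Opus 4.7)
My plan is to construct $v$ as the locally uniform $C^{1,2}$ limit of classical solutions to Cauchy--Dirichlet problems on the bounded approximating subdomains $E_n$. Two-sided pointwise bounds on the approximants are produced from barriers built out of the Lyapunov function $\phi_0$: the upper barrier directly from $\phi_0$, and the lower barrier via an exponential substitution combined with the Feynman--Kac formula under $\prob^{\phi_0,x}$, whose non-explosion is exactly Assumption \ref{ass: wellpose Lphi0}. Interior regularity for quasilinear parabolic equations with quadratic gradient then provides enough compactness to pass to the limit, and the growth bound \eqref{eq: growth v} is inherited from the upper barrier.

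For the upper barrier, set
\[
 \bar v(t,x) \dfn \phi_0(x) + K_0 + C_0 t,\qquad K_0 \dfn \sup_{x\in E}(v_0-\phi_0),\qquad C_0 \dfn \sup_{x\in E}\mathfrak{F}[\phi_0];
\]
$K_0$ is finite by \eqref{eq: v0 bound}, and $C_0$ is finite because $\mathfrak{F}[\phi_0]$ is continuous on each $\bar E_n$ and tends to $-\infty$ off $E_n$ by Assumption \ref{ass: long_run}. Then $\partial_t\bar v-\mathfrak{F}[\bar v]=C_0-\mathfrak{F}[\phi_0]\geq 0$ and $\bar v(0,\cdot)\geq v_0$, so $\bar v$ is a classical super-solution. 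Fix $T>0$ and, on each $E_n$, solve
\[
 \partial_t v_n = \mathfrak{F}[v_n]\ \text{in}\ (0,T)\times E_n,\quad v_n(0,\cdot)=v_{0,n},\quad v_n = \bar v\ \text{on}\ (0,T)\times\partial E_n,
\]
with $v_{0,n}$ a smoothing of $v_0$ agreeing with $\bar v(0,\cdot)$ in a collar of $\partial E_n$ to meet the first-order parabolic compatibility condition. On the bounded, smoothly bounded $E_n$, Assumption \ref{ass: coeff} i) gives uniform ellipticity, all coefficients are smooth and bounded, and the two-sided $L^\infty$ bracketing supplied by the barriers lets classical theory for semilinear parabolic equations with quadratic gradient (e.g.\ Ladyzhenskaya--Solonnikov--Ural'tseva, Ch.\ V) produce a unique $v_n\in C^{1,2}((0,T]\times E_n)\cap C([0,T]\times \bar E_n)$. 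Linearising $\mathfrak{F}$ along $\theta\bar v+(1-\theta)v_n$ reduces the inequality $v_n\leq \bar v$ to the linear parabolic maximum principle on $E_n$.

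For the lower bound, write $u_n\dfn v_n-\phi_0$, so that
\[
 \partial_t u_n = \mathfrak{F}[\phi_0] + \cL^{\phi_0} u_n + \tfrac{1}{2}\textstyle\sum_{i,j}\barA_{ij}\, D_i u_n\, D_j u_n \quad\text{on}\ (0,T)\times E_n,
\]
and set $w_n \dfn \exp(\uk u_n)$. A short calculation using $\barA\geq \uk A$ yields
\[
 \partial_t w_n - \cL^{\phi_0} w_n - \uk\, \mathfrak{F}[\phi_0]\, w_n \; =\; \tfrac{\uk}{2}\, w_n\, \textstyle\sum_{i,j}(\barA_{ij}-\uk A_{ij})\, D_i u_n\, D_j u_n \;\geq\; 0,
\]
so $w_n$ is a super-solution of a linear parabolic equation whose driving diffusion $\prob^{\phi_0,x}$ is non-explosive by Assumption \ref{ass: wellpose Lphi0}. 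Comparison with the Feynman--Kac representation of the linear equation, together with the boundary data $w_n|_{\partial E_n}=\exp(\uk(K_0+C_0 t))$ and the upper bound $\mathfrak{F}[\phi_0]\leq C_0$, yields a pointwise lower bound on $w_n$ that is positive and $n$-uniform on any compact subset of $E$. With two-sided $L^\infty$ bounds in hand on compact subsets of $(0,T)\times E$ and local ellipticity from Assumption \ref{ass: coeff} i), interior regularity for quasilinear parabolic equations with natural (quadratic) gradient growth (Lieberman, Ch.\ XII) provides $n$-uniform $C^{1+\gamma/2,\,2+\gamma}_{\mathrm{loc}}$ estimates on $v_n$; Arzel\`a--Ascoli extracts a $C^{1,2}_{\mathrm{loc}}$-convergent subsequence whose limit $v$ solves \eqref{eq: pde E} classically, inherits $v\leq\bar v$ and hence \eqref{eq: growth v}, and satisfies $v(0,\cdot)=v_0$ from the bracketing and continuity of $v_0$.

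The hard step is the uniform lower bound on the $v_n$. The super-solution $\phi_0$ is purpose-built to dominate the drift and zero-order parts of $\mathfrak{F}$, but the quadratic gradient term $\tfrac{1}{2}\barA_{ij}D_i v D_j v$ is non-negative and precludes a symmetric construction of a lower barrier from $-\phi_0$. The exponential substitution linearises the problem in only one direction, and the Feynman--Kac argument exploiting that linearisation needs both the non-explosion of $\prob^{\phi_0,x}$ supplied by Assumption \ref{ass: wellpose Lphi0} and enough integrability of $\mathfrak{F}[\phi_0]$ along that diffusion to produce a genuine positive lower bound; keeping this bound uniform in $n$ and stable under the limit is the main technical burden.
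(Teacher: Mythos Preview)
Your overall strategy---bracket by barriers built from $\phi_0$, solve localized Cauchy--Dirichlet problems on $E_n$, then pass to a $C^{1,2}_{\mathrm{loc}}$ limit via interior quasilinear estimates---is exactly the paper's route, and your exponential substitution $w_n = e^{\uk(v_n-\phi_0)}$ together with Feynman--Kac under $\prob^{\phi_0,x}$ is the right mechanism for the lower bound. The paper organizes the same ingredients more symmetrically: for $\zeta>0$ it sets
\[
\psi(t,x;\zeta) \dfn \phi_0(x) + \frac{1}{\zeta}\log\expec^{\prob^{\phi_0,x}}\!\left[\exp\!\Big(\zeta(v_0-\phi_0)(X_t)+\zeta\!\int_0^t\mathfrak{F}[\phi_0](X_s)\,ds\Big)\right],
\]
and checks (by the same Cole--Hopf computation you carried out) that $\partial_t\psi-\mathfrak{F}[\psi]=\tfrac{1}{2}\nabla u'(\zeta A-\barA)\nabla u$ with $u=\psi-\phi_0$; thus $\psi_1\dfn\psi(\cdot\,;\ok)$ is a super-solution, $\psi_2\dfn\psi(\cdot\,;\uk)$ a sub-solution, and H\"older's inequality gives $\psi_2\leq\psi_1$. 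Your crude upper barrier $\bar v=\phi_0+K_0+C_0 t$ is simply the trivial upper bound on $\psi_1$, and your Feynman--Kac lower bound on $w_n$ is, once $\tau_n\uparrow\infty$ by non-explosion, precisely $e^{\uk(\psi_2-\phi_0)}$.

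The one substantive thing the paper's packaging buys is the initial condition: both $\psi_1$ and $\psi_2$ satisfy $\psi_i(0,\cdot)=v_0$, so the bracket $\psi_2\leq v\leq\psi_1$ forces $v(0,\cdot)=v_0$ immediately. Your $\bar v(0,\cdot)=\phi_0+K_0$ does not pinch down to $v_0$, so the claim ``$v(0,\cdot)=v_0$ from the bracketing'' does not follow as written; you would need either a sharper upper barrier matching $v_0$ at $t=0$ (which is exactly $\psi_1$) or a separate modulus-of-continuity argument for the $v_n$ near $t=0$. Apart from this point---and the need to supply some $n$-dependent lower barrier (e.g.\ a large negative constant minus $t\sup_{E_n}|V|$) to get existence on each $E_n$ \emph{before} your $n$-uniform Feynman--Kac bound becomes available---your proposal is sound and essentially the paper's argument rearranged.
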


The uniqueness of classical solutions to \eqref{eq: pde E} in the class of
functions satisfying \eqref{eq: growth v} follows from the following
comparison result, which requires a strengthening of Assumption \ref{ass:
  long_run}.

\begin{ass}\label{ass: long_run strong}
For the $\phi_0$ as in Assumption \ref{ass: long_run},
\begin{equation}\label{eq: more phi_0 conditions}
\begin{split}
\lim_{n\uparrow\infty} \inf_{x\in E\setminus E_n} \phi_0(x) = \infty \quad \text{and} \quad \exists\ \delta > 1\textrm{ such that } \lim_{n\uparrow\infty} \sup_{x\in
  E\setminus E_n}\mathfrak{F}[\delta \phi_0](x) = -\infty.
\end{split}
\end{equation}
\end{ass}

\begin{prop}\label{prop: comparison}
Let Assumptions \ref{ass: coeff}, \ref{ass: long_run}, \ref{ass: wellpose
  Lphi0} and \ref{ass: long_run strong} hold.  Let $v_0,\tilde{v}_0$ satisfy
\eqref{eq: v0 bound} and denote by $v,\tilde{v}$ the respective solutions to
\eqref{eq: pde E} from Proposition \ref{prop: pde existence}. Then
$v_0\leq \tilde{v}_0$ on $E$ implies
\[v\leq \tilde{v}, \quad \text{ on } [0,\infty)\times E.\]
\end{prop}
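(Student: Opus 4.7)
\emph{Plan.} My approach is to linearize the difference of the two solutions. Setting $w\dfn v-\tilde v$, the factorization
\[
\tfrac{1}{2}\pare{\nabla v'\barA\nabla v-\nabla\tilde v'\barA\nabla\tilde v}=\tfrac{1}{2}(\nabla v+\nabla\tilde v)'\barA\nabla w
\]
shows that $w$ satisfies the linear, possibly degenerate parabolic equation
\[
\partial_t w=\tfrac{1}{2}\trace{AD^2w}+\bra{B+\tfrac{1}{2}\barA\pare{\nabla v+\nabla\tilde v}}'\nabla w,\qquad w(0,\cdot)=v_0-\tilde v_0\leq 0,
\]
which has no zeroth-order term. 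A Feynman--Kac representation should therefore force $w\leq 0$.

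\emph{Probabilistic representation.} For fixed $T>0$ and $x\in E$, I would set up the time-inhomogeneous martingale problem on $(\Omega,\F_T)$ associated with the operator
\[
\widetilde{\cL}_r\dfn\tfrac{1}{2}\trace{AD^2\cdot}+\bra{B+\tfrac{1}{2}\barA\pare{\nabla v(T-r,\cdot)+\nabla\tilde v(T-r,\cdot)}}'\nabla,\qquad r\in[0,T],
\]
in analogy with the construction of $\prob_T^{v,x}$ in \eqref{eq: L_phi_def_T}. Once it is known that this martingale problem is well-posed (i.e.\ the coordinate process $X$ does not explode on $[0,T]$), It\^o's formula applied to $w(T-r,X_r)$ kills the drift, precisely because $w$ solves the linearized PDE above; so $r\mapsto w(T-r,X_r)$ is a local martingale under the corresponding measure $\widetilde\prob^{T,x}$. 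If it is moreover a true martingale, taking expectations at $r=T$ yields
\[
w(T,x)=\widetilde\expec^{T,x}\bra{w(0,X_T)}\leq 0,
\]
so the conclusion follows by the arbitrariness of $T$ and $x$.

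\emph{Main obstacle: Lyapunov estimate.} The hard part --- and where Assumption \ref{ass: long_run strong} is used in an essential way beyond Assumption \ref{ass: long_run} --- is to verify both non-explosion of the martingale problem and the true-martingale property, since the drift of $\widetilde{\cL}_r$ contains the only-locally-controlled gradients $\nabla v,\nabla\tilde v$. The plan is to employ $\delta\phi_0$, with $\delta>1$ as in Assumption \ref{ass: long_run strong}, as a Lyapunov function. A short calculation based on the expression for $\fF[\delta\phi_0]$ gives
\[
\widetilde{\cL}_r(\delta\phi_0)=\fF[\delta\phi_0]-\tfrac{\delta^2}{2}\nabla\phi_0'\barA\nabla\phi_0-V+\tfrac{\delta}{2}(\nabla v+\nabla\tilde v)'\barA\nabla\phi_0,
\]
and splitting the cross-term by Young's inequality yields a portion of the form $C\,\nabla\phi_0'\barA\nabla\phi_0$ --- absorbed by the negative second term --- and a portion of the form $C\pare{\nabla v'\barA\nabla v+\nabla\tilde v'\barA\nabla\tilde v}$, to be handled via interior parabolic gradient estimates applied to \eqref{eq: pde E} together with the scalar bound \eqref{eq: growth v}. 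Combined with $\fF[\delta\phi_0]\to-\infty$ and $\phi_0\to\infty$ on $E\setminus E_n$ from Assumption \ref{ass: long_run strong}, this gives the Khasminskii-type estimate $\widetilde{\cL}_r(\delta\phi_0)\leq C(1+\delta\phi_0)$ outside a large $E_n$, which both rules out explosion and provides enough integrability of $\phi_0(X_r)$ to upgrade the It\^o local martingale to a true one through \eqref{eq: growth v}. Closing these Lyapunov estimates --- in particular balancing the cross-term absorption against the margin afforded by $\delta>1$ --- is what I expect to be the main technical step.
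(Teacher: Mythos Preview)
Your linearization via the averaged drift $B+\tfrac12\barA(\nabla v+\nabla\tilde v)$ is natural, but the Lyapunov step as written does not close. After Young's inequality you are left with a term of order $\nabla v'\barA\nabla v+\nabla\tilde v'\barA\nabla\tilde v$ that must be dominated by $C(1+\phi_0)$ \emph{uniformly as $x\to\partial E$}; interior parabolic gradient estimates together with \eqref{eq: growth v} only control $|\nabla v(t,\cdot)|$ on compact subsets of $E$, with constants that blow up near the boundary. Nothing in the standing assumptions bounds $\nabla v'\barA\nabla v$ in terms of $\phi_0$ near $\partial E$, so the Khasminskii estimate for $\widetilde{\cL}_r(\delta\phi_0)$ cannot be completed this way. (Your displayed identity for $\widetilde{\cL}_r(\delta\phi_0)$ also carries a bare $-V$ term; since $V$ may tend to $-\infty$ at $\partial E$ this is a further obstruction.) A second gap is the true-martingale upgrade: even granting non-explosion, \eqref{eq: growth v} gives only the one-sided control $v,\tilde v\leq\phi_0+C_T$, so $w=v-\tilde v$ has no integrable lower bound a priori, and Fatou points the wrong way.

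The paper sidesteps both issues by working under $\prob^{v,x}_T$, the martingale problem for $\cL^{v,T-\cdot}$ with drift $B+\barA\nabla v$ involving only \emph{one} of the solutions, rather than the averaged operator. Two things are gained. First (Lemma~\ref{lem: P^T wellposed}), taking $v-\delta\phi_0$ as Lyapunov function, the unknown gradient $\nabla v$ completes a perfect square,
\[
\cL^{v,T-\cdot}(v-\delta\phi_0)=\partial_t v-\fF[\delta\phi_0]+\tfrac12(\nabla v-\delta\nabla\phi_0)'\barA(\nabla v-\delta\nabla\phi_0)\geq\partial_t v-\fF[\delta\phi_0],
\]
so well-posedness follows from $\fF[\delta\phi_0]\to-\infty$ and \eqref{eq: growth v} alone, with no global gradient information on $v$ required. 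Second, setting $w=\tilde v(T-\cdot,\cdot)-v(T-\cdot,\cdot)$ one has $\partial_t w+\cL^{v,T-\cdot}w=-\tfrac12\nabla w'\barA\nabla w\leq-\tfrac{\uk}{2}\nabla w'A\nabla w$; hence under $\prob^{v,x}_T$,
\[
e^{\uk w(T,X_T)-\uk w(0,x)}\leq \mathcal{E}\Big(\uk\int_0^\cdot\nabla w'a\,dW^v\Big)_T,
\]
and the right-hand side, being a nonnegative local martingale, has expectation at most $1$. Since $w(T,X_T)=(\tilde v_0-v_0)(X_T)\geq 0$, this yields $w(0,x)=\tilde v(T,x)-v(T,x)\geq 0$ directly, with no true-martingale verification needed. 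The point is that keeping the quadratic residual $-\tfrac12\nabla w'\barA\nabla w$ (rather than absorbing it into the drift as you do) is precisely what makes the exponential-supermartingale trick available.
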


\subsection{Convergence}\label{subsec: conv}

To study the large time behavior of $v(t, \cdot)$, we restrict the initial condition $v_0$ in \eqref{eq:
  pde E} from the larger class of functions satisfying \eqref{eq: v0 bound}
to the class of functions satisfying
\begin{equation}\label{eq: v0 abs bound}
\sup_{x\in E}\left(|v_0| - \phi_0\right)(x) < \infty.
\end{equation}
Note that $v_0\equiv 0$ satisfies the above bound since $\phi_0\geq 0$. For $v_0$ satisfying \eqref{eq: v0 abs bound}, let $v$ be the unique classical solution to \eqref{eq: pde E} from Proposition \ref{prop: pde existence}.

We define the difference between $v$ and $\hat{\lambda} \cdot +\hat{v}$, where $(\hat{v}, \hat{\lambda})$ comes from Proposition \ref{prop: e-existence}, as
\begin{equation}\label{eq: h}
h(t,x) := v(t,x) - \hat{\lambda} t - \hat{v}(x),\qquad (t,x)\in
[0,\infty)\times E.
\end{equation}
Hence $h\in C^{1,2}((0,\infty)\times E)\cap C([0,\infty)\times E)$ and a direct calculation using \eqref{eq: pde E} and \eqref{eq: e-eqn E} yields
\begin{equation}\label{eq: pde h}
\begin{split}
 \partial_t h = \cL^{\hat{v}}h + \frac12 \nabla h' \barA \nabla h, \quad \text{ on } (0,\infty)\times E,\qquad
 h(0,x)= (v_0 - \hat{v})(x).
\end{split}
\end{equation}

Using \eqref{eq: pde h} and Assumption \ref{ass: coeff}, it follows (cf. equation \eqref{eq: h lb}, Lemma \ref{lem: phi_0 int}, and Remark \ref{rem: uni-local bdd} below) that the functions $\cbra{h(t,\cdot)}_{t\geq 1}$ are bounded from below by an $\hat{m}$ integrable function.  To obtain a corresponding upper bound, crucial for proving convergence, the following assumption is made.  

\begin{ass}\label{ass: psi}
 There exists $\psi_0 \in C^3(E)$ such that
 \begin{align}
  \lim_{n\uparrow \infty} \inf_{x\in E\setminus E_n} \fF[\psi_0](x) = \infty, \qquad & \lim_{n\uparrow \infty} \inf_{x\in E\setminus E_n} (\phi_0 - \psi_0)(x) =\infty; \label{eq: psi_0 hat v}\\
  \inf_{x\in E}\left(\psi_0 + K\phi_0\right) > -\infty, \qquad & \sup_{x\in E} (\fF[\delta\phi_0] + \alpha(\delta\phi_0(x) -
\psi_0)) < \infty, \label{eq: main ub main ub}
 \end{align}
 for some $\alpha, K>0$ and $\delta$ from \eqref{eq: more phi_0 conditions}.
\end{ass}
As shown in \cite[Lemma 4.5]{Ichihara-Sheu}, \eqref{eq: psi_0 hat v} provides a lower bound for $\hat{v}$ in that
\begin{equation}\label{eq: hat v lb}
 \inf_E (\hat{v}- \psi_0) >-\infty.
\end{equation}
Furthermore, \eqref{eq: main ub main ub} provides an upper bound on $h(t, \cdot)$ for $t\geq 0$ (cf. Lemma \ref{lem: main ub} below), which is key for establishing convergence of $h$.  With all the assumptions in place, we now state first convergence result.

\begin{thm}\label{thm: conv}
Let Assumptions \ref{ass: coeff}, \ref{ass: long_run}, \ref{ass: wellpose
  Lphi0}, \ref{ass: long_run strong}, and \ref{ass: psi} hold. Then,  for $v_0$ satisfying \eqref{eq: v0 abs
  bound} and any $t\geq 0$, as functions of $x\in E$,
 \begin{enumerate}
 \item[i)] $\lim_{T\rightarrow \infty} \expec^{\prob^{\hat{v},x}}\bra{\int_0^t (\nabla h)' \overline{A} \nabla h (T-s, X_s) \, ds} =0$ in $C(E)$;
 \item[ii)] $\lim_{T\rightarrow \infty} \expec^{\prob^{\hat{v},x}}\bra{\sup_{0\leq s\leq t} \left|h(T, x) - h(T-s, X_s)\right|} =0$ in $C(E)$.
 \end{enumerate}
\end{thm}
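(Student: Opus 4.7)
The core of the argument is an Itô identity that converts the quadratic form $\nabla h' \barA \nabla h$ under the invariant diffusion into a difference of values of $h$, which can then be squeezed between the two-sided envelopes provided by the Lyapunov pair $(\phi_0,\psi_0)$ and washed out using ergodicity of $\prob^{\hat v,x}$.

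First I apply Itô's formula to $s\mapsto h(T-s,X_s)$ under $\prob^{\hat v,x}$. Because $h$ satisfies \eqref{eq: pde h} and $X$ has generator $\cL^{\hat v}$ under $\prob^{\hat v,x}$, the finite variation part collapses to $-\tfrac12 \nabla h' \barA \nabla h$, yielding
\[
h(T,x) - h(T-s,X_s) = \tfrac12 \int_0^s \nabla h' \barA \nabla h(T-u,X_u)\,du - M_s,
\]
for a continuous local martingale $M$ with $\langle M\rangle_s = \int_0^s \nabla h' A \nabla h(T-u,X_u)\,du$. Localizing by exit times from $E_n$ and passing to the limit — legitimate via the uniform-in-$T$, $\hat m$-integrable envelopes for $h$ (the lower bound coming from the Lyapunov structure of $\phi_0$ and \eqref{eq: h lb}, the upper bound from Assumption \ref{ass: psi} as stated in the paragraph preceding the theorem) — produces the key identity
\[
I(T,x,t) \dfn \tfrac12 \espalt{\prob^{\hat v,x}}{}{\int_0^t \nabla h' \barA \nabla h(T-u,X_u)\,du} = h(T,x) - \espalt{\prob^{\hat v,x}}{}{h(T-t,X_t)} \geq 0,
\]
where non-negativity uses $\barA \geq \uk A \geq 0$ from Assumption \ref{ass: coeff}.

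Next I integrate this identity against the invariant density $\hat m$. Stationarity of $X$ under $\prob^{\hat v,\cdot}$ with respect to $\hat m$ gives $\int_E \espalt{\prob^{\hat v,x}}{}{h(T-t,X_t)}\,\hat m(dx) = \int_E h(T-t,x)\,\hat m(dx)$. Setting $F(T)\dfn \int_E h(T,x)\,\hat m(dx)$ — finite and bounded uniformly in $T$ once $\hat m$-integrability of the envelopes is checked via Proposition \ref{prop: e-existence}(iii) — the identity becomes $\int_E I(T,x,t)\,\hat m(dx) = F(T) - F(T-t)$. Since $I\geq 0$ the map $F$ is non-decreasing and bounded, hence convergent, so $\int_E I(T,x,t)\,\hat m(dx)\to 0$; that is, $I(T,\cdot,t)\to 0$ in $\bL^1(E,\hat m)$. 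To upgrade this to convergence in $C(E)$ I will establish local equicontinuity of $\{I(T,\cdot,t)\}_T$ on compacts of $E$: from the uniform-in-$T$ $\bL^\infty_{\textrm{loc}}$ bounds on $h$, interior parabolic regularity (Schauder or Krylov-Safonov applied to \eqref{eq: pde h}) delivers uniform-in-$T$ $C^{1+\gamma/2,2+\gamma}_{\textrm{loc}}$ bounds on $h$, which propagate to equicontinuity of $I(T,\cdot,t)$. Combining equicontinuity, positivity, $\bL^1_{\textrm{loc}}(\hat m)$ convergence, and the fact that $\hat m$ has strictly positive density on $E$ (by local ellipticity) yields $I(T,\cdot,t)\to 0$ in $C(E)$; this is (i). For (ii), take sup in the Itô identity and use Burkholder-Davis-Gundy together with $A\leq \uk^{-1}\barA$:
\[
\espalt{\prob^{\hat v,x}}{}{\sup_{0\leq s\leq t}|M_s|} \leq C\, \espalt{\prob^{\hat v,x}}{}{\sqrt{\langle M\rangle_t}} \leq C\,\uk^{-1/2}\sqrt{2\,I(T,x,t)},
\]
by Jensen, so (i) directly delivers (ii), uniformly on compacts.

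The main obstacle is the equicontinuity upgrade: producing uniform-in-$T$ interior regularity for $h$ that holds out to $t\to\infty$ requires extracting the full strength of Assumptions \ref{ass: long_run strong} and \ref{ass: psi} through the envelopes on $h$. A secondary but essential technical point is verifying $\hat m$-integrability of the bounding envelopes in closed form, which forces a careful choice of auxiliary $\phi$ in Proposition \ref{prop: e-existence}(iii) and is what genuinely couples the analytic and probabilistic sides of the argument.
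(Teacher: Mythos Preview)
Your proposal is correct and follows essentially the same route as the paper: the It\^o identity $I(T,x,t)=h(T,x)-\hat{\expec}^x[h(T-t,X_t)]$, integration against $\hat m$ plus monotonicity and boundedness of $F(T)=\int_E h(T,\cdot)\,\hat m$ to get $\bL^1(\hat m)$ convergence, equicontinuity plus Arzel\`a--Ascoli to upgrade to $C(E)$, and BDG for part~(ii). The only place the paper is more explicit than your sketch is the equicontinuity step, where it splits $I(T,\cdot,t)$ into $h(T,\cdot)$ (handled by quasilinear interior gradient estimates on \eqref{eq: pde h}) and $k^{t,T}(t,x)\dfn\hat{\expec}^x[h(T-t,X_t)]$ (handled by \emph{linear} Schauder estimates after observing that $k^{t,T}$ solves $\partial_s k=\cL^{\hat v}k$); your ``propagation'' remark should be fleshed out in exactly this way, since higher regularity of $h$ alone does not directly control $x\mapsto\hat{\expec}^x[h(T-t,X_t)]$.
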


\begin{rem}\label{rem: bsde}

As mentioned in the introduction, convergence in Theorem \ref{thm: conv} can be understood in the context of BSDEs. As generalizations of the
Feynman-Kac formula, solutions to BSDEs provide stochastic representations to solutions of
semi-linear PDEs (cf. \cite{Pardoux-Peng-PDE}). Given $T>0$, a solution
$v$ to \eqref{eq: pde E} and a solution $(\hat{\lambda},\hat{v})$ to
\eqref{eq: e-eqn E} define $(Y^T, Z^T)$ and $(\hat{Y},\hat{Z})$ by
\begin{equation}\label{eq: bsdes}
\begin{split}
(Y^T_t,Z^T_t) &\dfn  (v(T-t, X_t), a'\nabla v(T-t, X_t)),\qquad t\leq T,\\
(\hat{Y}_t,\hat{Z}_t) &\dfn (\hat{v}(X_t), a' \nabla \hat{v}(X_t)),\qquad t\geq 0,
\end{split}
\end{equation}
where $a = \sqrt{A}$. Then, $(Y^T, Z^T)$  solves the \emph{quadratic} BSDE:
 \begin{equation}\label{eq: bsde_fh}
  Y_t = v_0(X_T) + \int_t^T \pare{V(X_u)-\frac{1}{2}(Z^T_u)'M(X_u)Z^T_u} du - \int_t^T(Z^T_u)' dW^T_u,\quad t\leq T.
\end{equation}
Here, $W^T$ is a $\prob^{v,x}_T-$Brownian motion and $M(x)\dfn a^{-1}\barA
a^{-1}(x)$ \footnote{Note that Assumption \ref{ass: coeff}
 $ii)$ implies $\uk
 I_d \leq M\leq \ok I_d$, hence the generator of \eqref{eq: bsde_fh} has
 quadratic growth in $Z$.}. In a similar manner, $(\hat{Y},\hat{Z})$ solves the
 \emph{ergodic} BSDE:
 \begin{equation}\label{eq: bsde_erg}
  \hat{Y}_t = \hat{Y}_s + \int_t^s \pare{V(X_u)-\frac{1}{2}\hat{Z}'_u M(X_u) \hat{Z}_u  -\hat{\lambda}} du -\int_t^s \hat{Z}'_u d\hat{W}_u, \quad \text{ for any } t\leq s,
 \end{equation}
where $\hat{W}$ is a $\prob^{\hat{v},x}-$Brownian motion. This type of ergodic BSDE has been introduced in
 \cite{Fuhrman-Hu-Tessitore} and studied in \cite{Richou},
 \cite{Debussche-Hu-Tessitore}. Now set $\mathcal{Y}^T:= Y^T-
 \hat{Y}-\hat{\lambda}(T-\cdot)$ and $\mathcal{Z}^T:= Z^T-\hat{Z}$. A direct
 calculation using \eqref{eq: h} and \eqref{eq: bsdes} shows
\begin{equation*}
\begin{split}
\int_0^t \norm{\mathcal{Z}^T_s}^2 ds & = \int_0^t \nabla h'A\nabla
h(T-s, X_s)ds;\qquad \mathcal{Y}^T_t - \mathcal{Y}^T_0 = h(T-t,X_t) - h(T,x).
\end{split}
\end{equation*}
Thus, Theorem \ref{thm: conv} and Assumption \ref{ass: coeff} $ii)$ imply
 \[
  \lim_{T\rightarrow \infty} \expec^{\prob^{\hat{v}, x}} \bra{\int_0^t \norm{\mathcal{Z}^T_s}^2 ds} =0 \quad \text{and} \quad \lim_{T\rightarrow \infty} \expec^{\prob^{\hat{v}, x}} \bra{\sup_{0\leq s\leq t} \left|\mathcal{Y}^T_s - \mathcal{Y}^T_0\right|}=0, \quad \text{ for any } t>0.
 \]
\end{rem}

In addition to the convergence in Theorem \ref{thm: conv}, the function $h(t, \cdot)$ and its gradient also converge pointwise as $t\rightarrow\infty$. Such result has been proved in \cite{Ichihara-Sheu} when $E=\Real^d$ and $A= I_d$.

\begin{thm}\label{thm: pointwise conv}
 Let Assumptions \ref{ass: coeff}, \ref{ass: long_run}, \ref{ass: wellpose
  Lphi0}, \ref{ass: long_run strong}, and \ref{ass: psi} hold. Then, for $v_0$ satisfying \eqref{eq: v0 abs
  bound},
 \begin{enumerate}[i)]
  \item[i)] $\lim_{t\rightarrow \infty} h(t, \cdot) = C$ in $C(E)$ for some constant $C$;
  \item[ii)] $\lim_{t\rightarrow \infty} \nabla h(t, \cdot) =0$ in $C(E)$.
 \end{enumerate}
\end{thm}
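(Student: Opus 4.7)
The plan is to extract locally uniform subsequential limits of $h(t,\cdot)$ by parabolic regularity, identify each such limit as a constant using Theorem \ref{thm: conv}(i), and then pin down a common value by a monotonicity argument against the invariant measure $\hat m$. First I would note that Lemma \ref{lem: main ub} (granted by Assumption \ref{ass: psi}) and the lower bound pointed out via equation \eqref{eq: h lb}, Lemma \ref{lem: phi_0 int}, and Remark \ref{rem: uni-local bdd} sandwich $\{h(t,\cdot)\}_{t\ge 1}$ between two $\hat m$-integrable envelopes, so in particular this family is locally uniformly bounded on $E$. A Bernstein-type interior gradient estimate for the quadratic PDE \eqref{eq: pde h}, applied on each $E_n$ and followed by interior parabolic Schauder estimates, upgrades this to locally uniform $C^{1+\gamma,\,2+\gamma}$ bounds on $(t,x)\mapsto h(t,x)$ for $t\ge 3$.

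Given $t_n \uparrow \infty$, set $h_n(s,x):= h(t_n+s,x)$ for $s\ge -t_n+3$. Arzel\`a--Ascoli together with a diagonal extraction yields a subsequence (not relabeled) with $h_n\to h_\infty$ in $C^{1,2}_{\text{loc}}(\Real\times E)$, where $h_\infty$ still satisfies \eqref{eq: pde h}. For fixed $t>0$ and $x\in E$, Theorem \ref{thm: conv}(i) applied at $T=t_n+t$ reads
\[
\expec^{\prob^{\hat v,x}}\!\bra{\int_0^t \nabla h_n'\,\barA\,\nabla h_n(t-s,X_s)\,ds}\longrightarrow 0.
\]
Non-negativity of the integrand, locally uniform convergence $\nabla h_n\to\nabla h_\infty$, and non-explosion of $X$ under $\prob^{\hat v,x}$ let Fatou's lemma pass the limit inside the expectation, yielding $\nabla h_\infty(t-s,X_s)=0$ for $ds\otimes d\prob^{\hat v,x}$--a.e.\ $(s,\omega)$. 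Setting $x=y$ and letting $s\downarrow 0$, then varying $y\in E$ and $t>0$, continuity forces $\nabla h_\infty \equiv 0$ on $\Real\times E$; connectedness of $E$ together with \eqref{eq: pde h} then gives $h_\infty\equiv C_\infty$ for some constant $C_\infty$.

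To show $C_\infty$ is independent of the subsequence, define $g(t):=\int_E h(t,x)\,\hat m(x)\,dx$, which is well-defined and bounded by the $\hat m$-integrable envelopes above. Integrating \eqref{eq: pde h} against $\hat m$ and using the invariance identity $\int_E \cL^{\hat v} f\,d\hat m=0$ (justified by a cutoff with $\phi_0$) gives
\[
g'(t)=\tfrac 12 \int_E \nabla h'\,\barA\,\nabla h(t,x)\,\hat m(x)\,dx\ \ge\ 0,
\]
so $g(t)\uparrow g_\infty\in\Real$. Dominated convergence applied along any subsequence with $h(t_n,\cdot)\to C_\infty$ forces $g(t_n)\to C_\infty$, hence $C_\infty = g_\infty$ regardless of the subsequence. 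This proves (i) with $C=g_\infty$, and the $C^1_{\text{loc}}$-compactness from the first step combined with $\nabla h_\infty \equiv 0$ immediately gives (ii).

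The main obstacle is validating the adjoint identity $\int_E \cL^{\hat v} h(t,\cdot)\,d\hat m=0$: since $h(t,\cdot)$ is not compactly supported and $A$ may degenerate at $\partial E$, a straightforward integration by parts is unavailable, and one must localize via cutoffs built from the Lyapunov function $\phi_0$, then pass to the limit using the growth control from Lemma \ref{lem: main ub} together with the integrability statement in Proposition \ref{prop: e-existence}(iii). The Bernstein gradient estimate for the quadratic nonlinearity and the Fatou/dominated-convergence limit passages are routine once the envelope bounds are in hand.
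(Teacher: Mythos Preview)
Your overall architecture---compactness, identification of subsequential limits as constants, then uniqueness via a monotone functional against $\hat m$---is sound and leads to the result, but it differs from the paper's route in two places.

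For the identification of subsequential limits, you feed Theorem~\ref{thm: conv}(i) through Fatou to kill $\nabla h_\infty$. The paper instead uses the exponential inequality \eqref{eq: h lb exp ub}, which after sending $S\to\infty$ and invoking the ergodic limit of Remark~\ref{rem: uni-local bdd}(iii) gives $\liminf_{t\to\infty} h(t,x)\ge \tfrac{1}{\uk}\log\int_E e^{\uk h(T,y)}\hat m(y)\,dy$ for every fixed $T$, and then follows the argument of \cite[Proposition~4.3, Theorem~4.1]{Ichihara-Sheu}. Your use of Theorem~\ref{thm: conv}(i) is arguably more transparent and self-contained; the paper's exponential route, on the other hand, gives a sharper one-sided bound without needing $C^{1,2}$ convergence of the shifted family.

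For the monotonicity of $g(t)=\int_E h(t,\cdot)\,d\hat m$, you propose to differentiate under the integral and invoke $\int_E \cL^{\hat v} h(t,\cdot)\,d\hat m=0$, flagging this as the main obstacle. The paper bypasses this entirely: in the proof of Proposition~\ref{prop: quad var conv} one has the probabilistic identity $f^{t,T}(x)=h(T,x)-\hat\expec^x[h(T-t,X_t)]$ (from It\^o plus the uniform integrability of Corollary~\ref{cor: ui}), and integrating against $\hat m$ together with the invariance relation \eqref{eq: hat_m_invariance} for the transition density gives directly
\[
g(T)-g(T-t)=\int_E f^{t,T}(x)\,\hat m(x)\,dx\ \ge\ 0,
\]
with no integration by parts and no cutoff argument needed. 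This is both simpler and more robust in the degenerate setting; you should replace your adjoint-identity step with this. The paper's approach to part~(ii) is also somewhat different: rather than extracting $C^1$-subsequential limits, it shows that $f(n,T):=\int_{E_n}\nabla h'\barA\nabla h(T,y)\,\hat m(y)\,dy$ is uniformly continuous in $T$ (via interior H\"older estimates for $\nabla h$) and has vanishing time-average by Proposition~\ref{prop: quad var conv}, forcing $f(n,T)\to 0$ directly. Your compactness argument for (ii) is equally valid once (i) is in hand.
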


\nada{

\begin{rem}
As seen in the proofs below, Theorems \ref{thm: conv} and \ref{thm: pointwise conv} do not
explicitly require Assumption \ref{ass: long_run strong}. However, Assumption
\ref{ass: long_run strong} is crucial for both the well-posedness of the
problem (i.e. studying convergence of the unique solution $v$ to \eqref{eq:
  pde v}) and for verifying Assumption \ref{ass: h
  ub}.  Indeed, Proposition \ref{prop: phi_0 int} below shows that Assumption
\ref{ass: long_run strong} immediately yields $\phi_0\in
\bL^1(E,\hat{m})$. Furthermore, Assumption \ref{ass: long_run strong} is a key to identify the upper bound
function $\cJ$ in Assumption \ref{ass: h ub}, cf. Lemma \ref{lem: main ub} below.
\end{rem}

}


In the next section, Theorems \ref{thm: conv} and \ref{thm: pointwise conv}
are applied to domains $\Real^d$ and $\sd$ respectively.  There,
easy-to-verify growth conditions on coefficients are given so that $\phi_0$ and $\psi_0$ satisfying all requirements are constructed, thus implying the conclusions in Theorems \ref{thm: conv} and \ref{thm: pointwise conv}.

\section{Convergence results when the state space is $\Real^d$ or $\sd$}\label{sec: conv Rd sd}

\subsection{The $\Real^d$ case}\label{subsec: Rd}

This case has been studied in \cite{Ichihara-Sheu} when $A(x)=I_d$. Here, we present an extension when $A$ is locally elliptic. Other than the regularity assumptions at the beginning of Section \ref{sec: abstract conv}, and Assumption \ref{ass: coeff}, the coefficients in $\fF$ satisfy the following growth conditions:

\begin{ass}\label{ass: Rd growth}
$\,$
\begin{enumerate}[i)]
 \item $A$ is bounded and $B$ has at most linear growth. In particular, there exists an $\alpha_1 > 0$ such that $x'A(x)x
   \leq \alpha_1(1+|x|^2)$, for $x\in \Real^d$.
 \item There exist $\beta_1\in \Real$ and $C_1>0$ such that
     \[
      B(x)'x \leq -\beta_1 |x|^2 + C_1, \quad x\in \Real^d.
     \]
 \item There exist $\gamma_1, \gamma_2\in \Real$ and $C_2>0$ such that
     \[
      -\gamma_2 |x|^2 -C_2 \leq V(x) \leq -\gamma_1 |x|^2 + C_2, \quad x\in \Real^d.
     \]
 \item $\max\cbra{\beta_1,\gamma_1} > 0$.  Additionally
\begin{enumerate}[a)]
\item When $\beta_1\leq 0$ and $\gamma_1> 0$ there exist $\alpha_2, C_3>0$ such that
     \[
      x' A(x) x \geq \alpha_2 |x|^2 - C_3, \quad x\in \Real^d;
     \]
 \item When $\beta_1 > 0$ and $\gamma_1< 0$, for the $\alpha_1$ of part $i)$,
\[
\beta_1^2 + 2\gamma_1\ok\alpha_1 > 0.
\]
\end{enumerate}
However, when $\beta_1 >0$ and $\gamma_1\geq 0$, no additional conditions are needed.
\end{enumerate}
\end{ass}

\begin{rem}\label{rem: rd coeff} To understand Assumption \ref{ass: Rd growth} $iv)$, consider a $\Real^d$-valued diffusion $X$ with dynamics
\begin{equation}\label{eq: diffusion}
dX_t = B(X_t)dt + a(X_t)dW_t, \quad X_0=x\in \Real^d,
\end{equation}
where $W$ is a $d$ dimensional Brownian motion and $a= \sqrt{A}$. By Assumption \ref{ass: Rd growth} $i)$ and
the regularity assumptions on $A$ and $B$, \eqref{eq: diffusion} admits a global strong solution $(X_t)_{t\geq 0}$.
If $\beta_1 > 0$ then $X$ is
mean-reverting.  On the other hand, if $\gamma_1 > 0$, $V$ decays to $-\infty$ on the boundary. Thus, part $iv)$ requires either mean reversion or a decaying potential. If both happen, then no additional
parameter restrictions are necessary. However, if mean reversion fails we
require uniform ellipticity for $A(x)$ in the direction of $x$. If $\gamma_1 <
0$ then a delicate relationship between the growth and
degeneracy of $A$, mean reversion of $B$ and the growth of $V$ is needed to ensure convergence results.
\end{rem}

Under these growth assumptions on model coefficients, it follows that with
$\phi_0(x) = (c/2) |x|^2$ and $\psi_0(x) = -(\tilde{c}/2) |x|^2$ for some $c, \tilde{c}>0$, Assumptions \ref{ass: long_run},
\ref{ass: wellpose Lphi0}, \ref{ass: long_run strong}, and \ref{ass: psi}
hold; see Section \ref{subsec: Rd proof} below.  In this case, the main convergence result reads:

\begin{thm}\label{thm: conv Rd}
Suppose that Assumptions \ref{ass: coeff} and \ref{ass: Rd growth} are
satisfied. Then, for any $v_0$ satisfying \eqref{eq: v0 abs bound}, the statements of Theorems \ref{thm: conv} and \ref{thm: pointwise conv} hold.
\end{thm}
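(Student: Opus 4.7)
The plan is to deduce Theorem \ref{thm: conv Rd} from Theorems \ref{thm: conv} and \ref{thm: pointwise conv} by verifying, under Assumption \ref{ass: Rd growth}, that Assumptions \ref{ass: long_run}, \ref{ass: wellpose Lphi0}, \ref{ass: long_run strong}, and \ref{ass: psi} all hold for the quadratic test functions
\begin{equation*}
\phi_0(x) \dfn \tfrac{c}{2}|x|^2, \qquad \psi_0(x) \dfn -\tfrac{\tilde c}{2}|x|^2,
\end{equation*}
for constants $c,\tilde c>0$ to be specified. Since $\nabla\phi_0 = cx$ and $D^2\phi_0 = cI_d$, the definition \eqref{eq: F E} gives
\begin{equation*}
\fF[\phi_0](x) = \tfrac{c}{2}\trace{A(x)} + \tfrac{c^2}{2}\,x'\barA(x)x + c\,B(x)'x + V(x),
\end{equation*}
with an analogous expression for $\fF[\psi_0]$. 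Combining Assumption \ref{ass: Rd growth} i)--iii) with Assumption \ref{ass: coeff} ii) produces
\begin{equation*}
\fF[\phi_0](x) \leq \Bigl(\tfrac{c^2\ok\alpha_1}{2} - c\beta_1 - \gamma_1\Bigr)|x|^2 + O(1),\qquad \fF[\psi_0](x) \geq \tfrac{\tilde c^2\uk}{2}\,x'A(x)x + (\tilde c\beta_1 - \gamma_2)|x|^2 + O(1).
\end{equation*}
Verifying Assumption \ref{ass: long_run} and the first condition in Assumption \ref{ass: psi} thereby reduces to arranging that these two leading quadratic coefficients have the required sign.

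The heart of the argument is the case analysis dictated by Assumption \ref{ass: Rd growth} iv). When $\beta_1>0$ and $\gamma_1\geq 0$, any sufficiently small $c$ and sufficiently large $\tilde c$ work and no further hypothesis is needed. When $\beta_1>0$ and $\gamma_1<0$, the supplementary condition $\beta_1^2 + 2\ok\alpha_1\gamma_1 > 0$ is precisely the discriminant condition that guarantees $c\mapsto c^2\ok\alpha_1/2 - c\beta_1 - \gamma_1$ takes negative values on a non-empty interval of positive $c$'s; any such $c$ is admissible and large $\tilde c$ still controls $\fF[\psi_0]$ through the drift. When $\beta_1\leq 0$ and $\gamma_1>0$, small $c$ suffices for $\fF[\phi_0]$ because the value at $c=0$ is the strictly negative $-\gamma_1$, and the supplementary ellipticity $x'A(x)x\geq \alpha_2|x|^2 - C_3$ refines the bound on $\fF[\psi_0]$ to $(\tilde c^2\uk\alpha_2/2 + \tilde c\beta_1 - \gamma_2)|x|^2 + O(1)$, which is positive for $\tilde c$ large. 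In every sub-case, once $c$ is fixed, continuity of the quadratic in the prefactor supplies $\delta>1$ close enough to one that $\fF[\delta\phi_0]\to -\infty$ persists, yielding Assumption \ref{ass: long_run strong}. The two remaining items in Assumption \ref{ass: psi}, namely $\liminf(\phi_0-\psi_0)=\infty$ and $\inf(\psi_0+K\phi_0)>-\infty$ for $K\geq \tilde c/c$, are immediate from the quadratic form, while the final inequality
\[
\sup_{x\in\Real^d}\bigl(\fF[\delta\phi_0](x) + \alpha(\delta\phi_0(x)-\psi_0(x))\bigr) < \infty
\]
is obtained by choosing $\alpha>0$ small enough to absorb the extra quadratic contribution $\tfrac{\alpha(\delta c + \tilde c)}{2}|x|^2$ into the strictly negative leading coefficient of $\fF[\delta\phi_0]$.

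Finally, Assumption \ref{ass: wellpose Lphi0} is handled by observing that the drift of $\cL^{\phi_0}$ is $B(x) + c\barA(x)x$, which has at most linear growth because $B$ does and $\barA\leq \ok A$ is bounded by Assumption \ref{ass: Rd growth} i); applying It\^o's formula to $L(x)\dfn 1+|x|^2$ and using Assumption \ref{ass: Rd growth} i)--ii) gives $\cL^{\phi_0}L \leq \kappa L$ for some $\kappa>0$, and Khasminskii's non-explosion criterion concludes well-posedness. The main obstacle throughout is the bookkeeping needed to ensure that a single set of constants $(c,\tilde c,\delta,K,\alpha)$ simultaneously satisfies every inequality in each sub-case of Assumption \ref{ass: Rd growth} iv); once this is in place, Theorems \ref{thm: conv} and \ref{thm: pointwise conv} apply directly and Theorem \ref{thm: conv Rd} follows.
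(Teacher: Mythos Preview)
Your proposal is correct and follows essentially the same approach as the paper: the paper likewise takes $\phi_0(x)=\tfrac{c}{2}|x|^2$ and $\psi_0(x)=-\tfrac{\tilde c}{2}|x|^2$, derives the identical quadratic upper and lower bounds for $\fF[\phi_0]$ and $\fF[\psi_0]$, and performs the same case analysis on Assumption \ref{ass: Rd growth} iv) to select $c,\tilde c,\delta,\alpha$. The only cosmetic difference is that the paper secures $\delta>1$ by first exhibiting an open interval $(\underline c,\overline c)$ of admissible $c$'s and then taking $\delta c<\overline c$, whereas you invoke continuity in the prefactor directly; these are equivalent.
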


\subsection{The $\sd$ case}\label{subsec: sd}
Though $\sd$ cannot be set as $E$ directly, it can be identified with an open set
$E\subset\Real^{d(d+1)/2}$ which is filled up by subregions $E_n$
satisfying the given assumptions. This identification, discussed in detail in Appendix \ref{sec: identification}, allows one to freely go back and forth
between $E$ and $\sd$ and hence results are presented in this section using matrix, rather
than vector, notation.

To define $\fF$ in \eqref{eq: F E} using matrix
notation, note that $\fF$ takes the form
\begin{equation}\label{eq: op gen form}
\fF = \cL + \frac{1}{2}\sum_{i,j=1}^{d}\bar{A}_{ij}D_i D_j +V,
\end{equation}
where the linear operator $\cL$ is given in \eqref{eq: L_phi def} with $\phi
\equiv 0$ and is the generator associated to \eqref{eq: diffusion}.

To define $\cL$ in the matrix setting, we follow the notation used in
\cite[Section 3]{Mayerhofer-Pfaffel-Stelzer}. Let $B: \sd \rightarrow \sdall$ be locally Lipschitz and $F, G:\sd
\rightarrow \mathbb{M}^d$ be such that $G'\otimes
F(x)$ \footnote{Here $\otimes$ is the Kronecker product between two matrices
  whose definition is recalled at the end of Section \ref{sec: intro}.} is
locally Lipschitz.  Consider
\begin{equation}\label{eq: sde sd FG}
 dX_t = B(X_t) dt + F(X_t)dW_t G(X_t) + G(X_t)' dW_t' F(X_t)';\qquad X_0 = x\in\sd,
\end{equation}
where $W= (W^{ij})_{1\leq i,j\leq d}$ is a $\mathbb{M}^d$-valued Brownian
motion. Defining the functions $a^{ij}: \sd \rightarrow \mathbb{M}^d, i,j = 1,...,d$ by
\begin{equation}\label{eq: matrix a}
 a^{ij}_{kl} := F^{ik} G^{lj} + F^{jk}G^{li}, \quad k,l=1,\cdots, d,
\end{equation}
the system in \eqref{eq: sde sd FG} takes the form
\begin{equation}\label{eq: sde sd}
 dX^{ij}_t = B_{ij}(X_t) dt + \tr(a^{ij}(X_t) dW'_t), \quad i,j=1, \dots d.
\end{equation}
Thus $\cL$ is set as the generator associated to $X$:
\begin{equation}\label{eq: L_phi_matrix def}
\cL = \frac12 \sum_{i,j,k,l=1}^d \tr\pare{a^{ij}(a^{kl})'} D^2_{(ij), (kl)}
+  \sum_{i,j=1}^d B_{ij} D_{(ij)},
\end{equation}
where $D_{(ij)} = \partial_{x^{ij}}$ and $D^2_{(ij),(kl)}
= \partial^2_{x^{ij}x^{kl}}$. Now, let $\bar{A}_{(ij),(kl)}, i,j,k,l=1,...,d$
be functions on $\sd$ which are symmetric (in an analogous manner to
$\bar{A}_{ij} = \bar{A}_{ji}$ in the $\Real^d$ case):
\begin{equation}\label{eq: barA cond}
 \overline{A}_{(ij), (kl)} = \overline{A}_{(ji), (kl)} = \overline{A}_{(ij), (lk)} = \overline{A}_{(kl), (ij)}, \quad \text{for } i,j,k,l=1, \cdots, d.
\end{equation}
Given such an $\bar{A}$ and $V: \sd \rightarrow \Real$ the operator
$\mathfrak{F}$ is defined by
\begin{equation}\label{eq: F sd}
 \mathfrak{F} := \cL + \frac12 \sum_{i,j,k,l=1}^d D_{(ij)}\overline{A}_{(ij),(kl)} D_{(kl)}+ V.
\end{equation}

As in Section \ref{sec: abstract conv}, we assume that $\tr(a^{ij}(a^{kl})') \in
C^{2,\gamma}(\sd, \Real)$, $\overline{A}_{(ij), (kl)}\in C^{2,\gamma}(\sd,
\Real)$, $B\in C^{1,\gamma}(\sd, \sdall)$, and $V\in C^{1,\gamma}(\sd,
\Real)$, for some $\gamma\in (0,1]$ and any $i,j,k,l=1, \cdots, d$. The analogue of \eqref{eq: pde E} and \eqref{eq: e-eqn E} are:
\begin{eqnarray}
 \partial_t v &=& \fF[v], \quad (t,x) \in (0,\infty) \times \sd, \quad v(0,x) =v_0(x);\label{eq: pde sd}\\
 \lambda &=& \fF[v], \quad x\in \sd. \label{eq: e-eqn sd}
\end{eqnarray}
The notion of classical solutions to the above equations is defined in the same
manner as in Section \ref{sec: abstract conv}.
Appendix \ref{sec: identification} below shows that equations \eqref{eq: pde
  sd} and \eqref{eq: e-eqn sd} can be treated as special cases of \eqref{eq:
  pde E} and \eqref{eq: e-eqn E}. Hence existence and uniqueness of classical
solutions to \eqref{eq: pde sd} and \eqref{eq: e-eqn sd} follow from
Propositions \ref{prop: e-existence}, \ref{prop: pde existence}, and
\ref{prop: comparison}, provided the requisite assumptions are met.

We now specify Assumption \ref{ass: coeff} to the matrix setting. In particular, the first item below implies that $\fF$ in \eqref{eq: F sd} is locally elliptic; cf. Lemma
\ref{lem: elliptic sd} below. Before stating the assumptions, define
\begin{equation}\label{eq: f_g_def}
f(x):= FF'(x) \quad \text{and} \quad g(x):= G'G(x),\qquad x\in \sd.
\end{equation}
Calculation shows that $\tr\pare{a^{ij}(a^{kl})'} =
f^{ik}g^{jl}+f^{il}g^{jk}+f^{jk}g^{il}+f^{jl}g^{ik}$.
To keep the notation compact, the assumption giving bounds on
$\bar{A}$ below
uses the matrices $a^{ij}$ while all other assumptions use the functions $f$
and $g$.

\begin{ass}\label{ass: coeff sd}
 The functions $f, g$, and $\overline{A}$ satisfy
 \begin{enumerate}
  \item[i)] For any $n\in \Natural$, $x\in E_n \subset \sd$, and $\xi \in \Real^d$, $\xi' f(x) \xi\geq c_n |\xi|^2$ and $\xi' g(x) \xi \geq c_n |\xi|^2$, for some constant $c_n >0$;
  \item[ii)] There exist $\ok\geq\uk >0$ such that, for any $x\in \sd$ and $\theta\in \sdall$,
  \[
   \uk \sum_{i,j,k,l=1}^d \theta_{ij} \tr(a^{ij}(a^{kl})')(x) \theta_{kl} \leq \sum_{i,j,k,l=1}^d \theta_{ij}\overline{A}_{(ij),(kl)}(x)\theta_{kl} \leq \ok \sum_{i,j,k,l=1}^d \theta_{ij} \tr(a^{ij}(a^{kl})')(x)\theta_{kl}.
  \]
 \end{enumerate}
\end{ass}

As in the $\Real^d$ case, growth assumptions on the coefficients are needed to
construct the Lyapunov function. However, unlike $\Real^d$, there are two
types of boundaries to $\sd$ : $\{\|x\|=\infty\}$ and $\{\det(x) =
0\}$. Therefore separate growth assumptions are needed as $x$ approaches each
boundary. Let us first present growth assumptions when $\norm{x}$ is
large. Here, the assumptions are similar to those in Assumption \ref{ass: Rd
  growth} : cf. Remark \ref{rem: rd coeff} for a qualitative explanation of
the restriction in part $iv)$.

\begin{ass}\label{ass: growth sd} There exists  $n_0 > 0$ such that for $\norm{x}\geq n_0$ the following conditions hold:
\begin{enumerate}[i)]
 \item $B$ has at most linear growth and there exist $\alpha_1 > 0$ such that
   $\tr(f(x))\tr(g(x)) \leq \alpha_1 \norm{x}$.
\item  There exist $\beta_1 \in \Real$ and  $C_1>0$ such that
     \[
      \tr(B(x)'x) \leq -\beta_1 \norm{x}^2 + C_1.
     \]
 \item There exist constants $\gamma_1, \gamma_2\in \Real$ and $C_2>0$ such that
     \[
      -\gamma_2 \norm{x} -C_2 \leq V(x) \leq -\gamma_1 \norm{x} + C_2.
     \]
     Furthermore, $V(x)$ is uniformly bounded from above for $\norm{x}\leq n_0$.
 \item $\max\{\beta_1, \gamma_1\} >0$. Additionally
\begin{enumerate}[a)]
\item  When $\beta_1\leq 0$ and $\gamma_1 > 0$, there exists $\alpha_3,C_3>0$
  such that
     \[
       \tr(f(x)xg(x)x) \geq \alpha_3 \norm{x}^3 - C_3.
     \]
\item When $\beta_1 >0$ and $\gamma_1< 0$, for $\alpha_1$ of part $i)$
\begin{equation*}
\beta_1^2 +16 \ok \alpha_1 \gamma_1 >0.
\end{equation*}
\end{enumerate}
However, when $\beta_1 >0$ and $\gamma_1 \geq 0$, no additional conditions are needed.
\end{enumerate}
\end{ass}

For small $\det(x)$, different growth assumptions are needed.  To precisely
state them, for
$\delta\in\Real$ and $x\in \sd$ define
\begin{equation}\label{eq: Hdelta def}
 H_\delta(x) := \tr(B(x) x^{-1}) - (1+ \delta)\,\tr(f(x)x^{-1}g(x)x^{-1}) - \tr(f(x)x^{-1})\, \tr(g(x)x^{-1}).
\end{equation}
The function $H_0$ controls the explosion of solutions to \eqref{eq: sde
  sd}. Indeed, as shown in \cite[Theorem
 3.4]{Mayerhofer-Pfaffel-Stelzer}, \eqref{eq: sde sd} admits a global strong solution when $H_0(x)$ is
 uniformly bounded from below on $\sd$.

\begin{ass}\label{ass: H_eps}
There exits $\epsilon, c_0, c_1 > 0$ such that
\begin{enumerate}[i)]
\item $\inf_{x\in \sd} H_\epsilon(x) > -\infty$.
\item $\liminf_{\det(x)\downarrow 0} \left(H_\epsilon(x) + c_0\log(\det(x))\right) > -\infty$.
\item $\lim_{\det(x)\downarrow 0} \left(H_0(x) + c_1 V(x)\right) = \infty$.
\end{enumerate}
\end{ass}

\begin{rem}\label{rem: global soln}
 Lemma \ref{lem: elliptic sd} below shows that $H_\delta$ is decreasing in
 $\delta$ and hence part $i)$ of Assumption
 \ref{ass: H_eps}  implies $\inf_{x\in\sd} H_0(x)>-\infty$ so that \cite[Theorem
 3.4]{Mayerhofer-Pfaffel-Stelzer} yields the existence of global strong solution
 $(X_t)_{t\in \Real_+}$ to \eqref{eq: sde sd}. Part $ii)$ implies that $\phi_0$ can be chosen (up to additive and multiplicative constants) as $-\log(\det(x))$ when $\det(x)$ is small.
 Since part $ii)$ implies
 $\lim_{\det(x)\downarrow 0}H_0(x) = \infty$,  part $iii)$ allows for the
 potential to decay to $-\infty$
 as $\det(x)\downarrow 0$ but at a rate slower than the rate at which $H_0$ goes to $\infty$.
\end{rem}

\begin{exa}\label{exa: Wishart}
 The primary example for \eqref{eq: sde sd} is when $X$ follows a Wishart process:
 \begin{equation}\label{eq: wishart sde}
  dX_t = (LL' + K X_t + X_t K') \, dt + \sqrt{X_t} dW_t \Lambda' + \Lambda dW'_t \sqrt{X_t},
 \end{equation}
where $K, L, \Lambda \in \mathbb{M}^d$ with $\Lambda$ invertible. Here, $f$
and $g$ from \eqref{eq: f_g_def} specify to $f(x) = x$ and $g(x) =
\Lambda\Lambda'$. Thus, part $i)$ of Assumption \ref{ass: coeff sd} as well as
parts $i),ii)$ of Assumption \ref{ass: growth sd} readily follow. $H_\delta$ from \eqref{eq: Hdelta def} takes the form $H_\delta(x) = \tr((LL' -
(d+1+\delta)\Lambda \Lambda')x^{-1}) + 2\tr(K)$. Then $LL'\geq (d+1) \Lambda
\Lambda'$ ensures that $H_0$ is uniformly bounded from below on
$\sd$, and hence \eqref{eq: wishart sde} admits a unique global
strong solution. However, the slightly
stronger assumption: $LL' > (d+1) \Lambda \Lambda'$, is needed to satisfy
Assumption \ref{ass: H_eps}.  Indeed, for $LL' > (d+1)\Lambda\Lambda'$, part
$i)$ of Assumption \ref{ass: H_eps} is evident, and part $ii)$ holds because, as $\det(x)\downarrow 0$,
$\tr(Cx^{-1})+\log(\det(x))\rightarrow\infty$ for any $C\in \sd$.
Lastly, any potential $V$ which is bounded from below by $-\tr((LL'-(d+\delta+1)\Lambda\Lambda')x^{-1})$, for some $\delta>0$ and small $\det(x)$ satisfies part $iii)$.
\end{exa}

Let $n_0$ be from Assumption \ref{ass: growth sd} and let
$\overline{c},\underline{c}, C > 0$ be constants.  Under Assumptions
\ref{ass: growth sd} and \ref{ass: H_eps} a candidate Lyapunov function
$\phi_0$ is given by
\begin{equation}\label{eq: phi0 matrix def}
\phi_0(x) := -\underline{c}\log(\det(x)) + \overline{c}\norm{x}\eta(\norm{x}) + C,
\end{equation}
where the cutoff function $\eta\in C^{\infty}(0,\infty)$ is such that $0\leq \eta\leq 1$,
$\eta(x)=1$ when $x>n_0+2$, and $\eta(x) =0$ for $x < n_0 + 1$.
Furthermore, for $\underline{k}, \overline{k}>0$, $\psi_0$ is chosen as
\[
 \psi_0 (x):= \underline{k} \log(\det(x)) - \overline{k} \norm{x} \eta(\norm{x}), \quad x\in \sd.
\]
Section \ref{subsec: sd proof} proves that, under Assumptions \ref{ass: coeff sd} - \ref{ass: H_eps}, there exist $\underline{c}$, $\overline{c}$, $C$, $\underline{k}$, and $\overline{k}$ such that Assumptions \ref{ass: long_run}, \ref{ass: wellpose
  Lphi0}, \ref{ass: long_run strong}, and \ref{ass: psi} are satisfied. Then the main convergence result in the $\sd$ case readily follows:

\begin{thm}\label{thm: conv sd}
Suppose that Assumptions \ref{ass: coeff sd}, \ref{ass: growth sd}, and \ref{ass: H_eps} are satisfied. Then, for any $v_0$ satisfying  \eqref{eq: v0 abs bound}, the statements of Theorems \ref{thm: conv} and \ref{thm: pointwise conv} hold.
\end{thm}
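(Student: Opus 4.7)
The plan is to reduce Theorem \ref{thm: conv sd} to Theorems \ref{thm: conv} and \ref{thm: pointwise conv} by verifying, with the candidate $\phi_0$ from \eqref{eq: phi0 matrix def} and the candidate $\psi_0(x)=\uk\log(\det(x))-\ok\norm{x}\eta(\norm{x})$, each of Assumptions \ref{ass: coeff}, \ref{ass: long_run}, \ref{ass: wellpose Lphi0}, \ref{ass: long_run strong}, and \ref{ass: psi}. The identification in Appendix \ref{sec: identification} lets us treat $\sd$ as a domain $E\subset\Real^{d(d+1)/2}$ with a filling sequence $E_n$ having the required regularity, so that $\fF$ from \eqref{eq: F sd} literally takes the form \eqref{eq: F E}. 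The two candidate functions are engineered so that the $\pm\log(\det(x))$ piece controls the boundary component $\{\det(x)=0\}$ via Assumption \ref{ass: H_eps}, while the $\norm{x}\eta(\norm{x})$ piece (activated only for $\norm{x}$ large by the cutoff $\eta$) controls the boundary component $\{\norm{x}=\infty\}$ via Assumption \ref{ass: growth sd}; the cutoff is the key device that decouples the two analyses.

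First I would dispatch Assumption \ref{ass: coeff}: the local positive-definiteness of the coefficient matrix of $\fF$ in the vector representation follows from Assumption \ref{ass: coeff sd}~i) together with the matrix-ellipticity lemma (Lemma \ref{lem: elliptic sd}), and part ii) is verbatim the matrix version of Assumption \ref{ass: coeff sd}~ii). Next I would verify Assumption \ref{ass: long_run}. Computing $\fF[\phi_0]$ using $\nabla\log(\det(x))=x^{-1}$ (in matrix form) and the definition of $\cL$ gives, near $\{\det(x)=0\}$ (where $\eta\equiv 0$), an expression of the shape $\uc\,H_{\uk\uc}(x)+\text{bounded}-V(x)\cdot(\ldots)$, so that for a small choice of $\uc$ part iii) of Assumption \ref{ass: H_eps} delivers $\fF[\phi_0]\to-\infty$. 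For $\norm{x}$ large, the $\oc\norm{x}$ term dominates and the dichotomy in Assumption \ref{ass: growth sd}~iv) reproduces the $\Real^d$ analysis of Section \ref{subsec: Rd proof}; in case (b) the open inequality $\beta_1^2+16\ok\alpha_1\gamma_1>0$ provides exactly the slack needed to choose $\oc$. Assumption \ref{ass: wellpose Lphi0} is then immediate: $\fF[\phi_0]\to-\infty$ outside compacts, combined with $\phi_0\to\infty$ at both boundaries, is a Lyapunov criterion for non-explosion of the martingale problem for $\cL^{\phi_0}$.

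For Assumption \ref{ass: long_run strong}, the divergence of $\phi_0$ at both boundaries is built into the construction with $\uc,\oc>0$. The condition $\fF[\delta\phi_0]\to-\infty$ for some $\delta>1$ follows by redoing the previous computation after noting that $\cL[\delta\phi_0]$ scales linearly in $\delta$ while the quadratic term $\tfrac12\delta^2(\nabla\phi_0)'\bar A\nabla\phi_0$ scales by $\delta^2$; since the limiting inequalities from Assumptions \ref{ass: growth sd} and \ref{ass: H_eps} are strict, taking $\delta$ slightly larger than $1$ preserves the correct sign. Finally, for Assumption \ref{ass: psi} the computation of $\fF[\psi_0]$ differs from that of $\fF[\phi_0]$ essentially by sign reversal on both the $\log(\det)$ and $\norm{x}$ contributions; near $\det(x)=0$ one uses the monotonicity of $H_\delta$ in $\delta$ (Lemma \ref{lem: elliptic sd}) plus Assumption \ref{ass: H_eps}~iii) to obtain $\fF[\psi_0]\to+\infty$, and near $\norm{x}=\infty$ the same growth arguments give divergence to $+\infty$. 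The gap $\phi_0-\psi_0=-(\uc+\uk)\log(\det(x))+(\oc+\ok)\norm{x}\eta(\norm{x})+C$ diverges at both boundaries. The lower bound $\psi_0+K\phi_0>-\infty$ holds for $K\geq\uk/\uc$ large enough, and the upper bound in \eqref{eq: main ub main ub} is obtained by choosing $\alpha>0$ small enough that the term $\alpha(\delta\phi_0-\psi_0)$, which grows at most at the rate of $\phi_0-\psi_0$ already controlled, does not overwhelm the $-\infty$ already produced for $\fF[\delta\phi_0]$.

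The main obstacle is the joint balancing of the parameters $(\uc,\oc,C,\uk,\ok,\delta,\alpha)$ so that all five assumptions hold simultaneously, and in particular keeping track of the cross term $(\nabla(-\log\det))'\bar A\nabla(\norm{x}\eta)$ in $\fF[\phi_0]$ and $\fF[\delta\phi_0]$. The cutoff $\eta$ is what isolates this cross term to a bounded annulus where it is automatically controlled; without it, the $\log\det$ and $\norm{x}$ analyses would couple and the parameter constraints could become infeasible. Once all choices are fixed on this annulus, the two boundary asymptotics decouple and one can optimize the parameters separately against Assumptions \ref{ass: growth sd} and \ref{ass: H_eps}. Once Assumptions \ref{ass: coeff}, \ref{ass: long_run}, \ref{ass: wellpose Lphi0}, \ref{ass: long_run strong}, and \ref{ass: psi} are all in force, Theorems \ref{thm: conv} and \ref{thm: pointwise conv} apply verbatim and yield the two conclusions of Theorem \ref{thm: conv sd}.
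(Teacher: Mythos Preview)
Your overall strategy matches the paper's: verify Assumptions \ref{ass: coeff}, \ref{ass: long_run}, \ref{ass: wellpose Lphi0}, \ref{ass: long_run strong}, \ref{ass: psi} for the candidates $\phi_0,\psi_0$ and then invoke Theorems \ref{thm: conv} and \ref{thm: pointwise conv}. However, two of your key technical claims are incorrect and would not survive a careful write-up.

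First, the cross term $(\nabla\phi_0^{(1)})'\bar A\,\nabla\phi_0^{(2)}$ is \emph{not} confined to a bounded annulus by the cutoff $\eta$. For $\norm{x}>n_0+2$ one has $\eta\equiv 1$, so the cross term is fully present there, and since large $\norm{x}$ is compatible with small $\det(x)$ the two boundary analyses do in fact interact. The paper handles this not via the cutoff but via the elementary inequality $(u+v)'M(u+v)\leq 2u'Mu+2v'Mv$ for $M\geq 0$, which absorbs the cross term at the cost of doubling the constant (so $\ok/2$ becomes $\ok$ on each quadratic piece; see \eqref{eq: fF big ub}). This is precisely why the constraint on $\uc$ becomes $4\ok\uc<\epsilon$ rather than $2\ok\uc<\epsilon$. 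The cutoff's actual role is only to make $\phi_0^{(2)}$ and its derivatives bounded when $\norm{x}$ is bounded.

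Second, your argument for Assumption \ref{ass: wellpose Lphi0} is not valid. Knowing $\fF[\phi_0]\to-\infty$ tells you nothing directly about $\cL^{\phi_0}\phi_0$, since $\fF[\phi_0]=\cL^{\phi_0}\phi_0-\tfrac12(\nabla\phi_0)'\bar A\nabla\phi_0+V$ and both the quadratic term and $V$ must be removed before applying a Khasminskii-type criterion. The paper carries out a separate estimate for $\cL^{\phi_0}\phi_0$ (Corollary \ref{cor: phi prop sd}~i)), again using the doubling trick, which forces the stronger constraint $\uc<\epsilon/(8\ok)$ and then invokes the Lyapunov criterion $\cL^{\phi_0}\phi_0\leq\lambda\phi_0$ outside a compact set. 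A minor related slip: near $\det(x)\downarrow 0$ the divergence $\fF[\phi_0]\to-\infty$ comes from Assumption \ref{ass: H_eps}~ii) (which forces $H_\epsilon\to\infty$), not part iii); part iii) is what drives $\fF[\psi_0]\to+\infty$.
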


\section{Proofs in Section \ref{sec: abstract conv}}\label{sec: proof sec.2}

\subsection{Proofs in Section \ref{subsec: setup}}
Let us first briefly discuss proofs for Propositions \ref{prop: e-existence} and \ref{prop: pde existence}. Proposition \ref{prop: e-existence} i) essentially follows from \cite[Theorems 13, 18]{Guasoni-Robertson}, with only the following minor modifications. First, in \cite{Guasoni-Robertson} it is assumed that $\sup_{x\in E}V(x) < \infty$ and that $\bar{A}(x)$ takes a particular form.  However, $\sup_{x\in E}V(x) < \infty$ is not actually necessary in the presence of Assumption \ref{ass: long_run} and the only essential fact used regarding $\bar{A}$ (labeled $\hat{A}$ therein) is that Assumption \ref{ass: coeff} holds: see equation (91) therein.  To see this, when repeating the proof of Theorem 13 on page 272 of \cite{Guasoni-Robertson} note that since $\sup_{x\in E}\mathfrak{F}[\phi_0](x) < \infty$, it follows that $\mathfrak{F}[\phi_0] - \lambda < 0$ on $E$ for sufficient large $\lambda$. Then, since the generalized principal eigenfunction for the operator $L^c$ therein with $c=\uk$ is finite (as can be seen by repeating the argument on page 272), it follows again that for $\lambda$ large enough there exist strictly positive solutions $g$ of $L^cg = \lambda g$, at which point setting $f = (1/c)\log(g)$ and using Assumption \ref{ass: coeff} it follows that $\mathfrak{F}[f] - \lambda > 0$ on $E$.  From here the result follows exactly as in \cite[Theorem 13]{Guasoni-Robertson}.  The proof of \cite[Theorem 18]{Guasoni-Robertson} follows with only notational modifications.

To prove Proposition \ref{prop: e-existence} $ii)$, calculation shows that under Assumption \ref{ass: coeff}, for any two $\phi, \psi\in C^2(E)$,
the function $w:= e^{-\underline{\kappa} (\psi-\phi)}$ satisfies
\begin{equation}\label{eq: ineq ichihara}
\mathcal{L}^{\psi} w \leq \underline{\kappa} w (\mathfrak{F}[\phi] - \mathfrak{F}[\psi]) \quad  \text{on } E,
\end{equation}
where $\mathcal{L}^\psi$ is defined in \eqref{eq: L_phi def}. This is exactly \cite[Lemma 4.2 (b)]{Ichihara}\footnote{Note a negative sign needs to added to $\phi$ and $\psi$ in \cite{Ichihara} to fit our context.} with $\epsilon=0$ in (A5) therein. Then repeating remaining arguments in \cite[Section 4]{Ichihara}, the statement follows from \cite[Theorem 2.2 $(i)\Rightarrow (iv)$]{Ichihara}.

Define the stopping times $\cbra{\tau_n}_{n\in\Natural}$ as the first exit time of $X$ from $E_n$:
\begin{equation}\label{eq: tau_n_def}
\tau_n \dfn \inf\cbra{ t\geq 0 : X_t\not\in E_n}.
\end{equation}
Proposition \ref{prop: e-existence}
$iii)$ essentially follows from \cite[Proposition 2.4]{Ichihara-Sheu}. To
connect to the proof therein, note that \eqref{eq: ineq ichihara} with $\psi= \hat{v}$ and  $x\leq \max\{x+1,0\}$ combined yield:
\begin{equation*}
\cL^{\hat{v}}w \leq \uk w(\mathfrak{F}[\phi]-\hat{\lambda}) \leq \uk
w\left(\max\{\mathfrak{F}[\phi]-\hat{\lambda} + 1,0\}\right).
\end{equation*}
Since $\mathfrak{F}[\phi](x)\rightarrow -\infty$ as $x\rightarrow \partial E$
there is a constant $M$ so that $\cL^{\hat{v}}w \leq M$ on $E$.  Thus, by first
  stopping at $\tau_n$ and then using Fatou's lemma, there is a constant $C = C(x)$
  such that
$\expec^{\prob^{\hat{v},x}}\bra{e^{-\uk(\hat{v}-\phi)(X_t)}} \leq C+ Mt$.
The result now follows by repeating the argument in \cite[Proposition
2.4]{Ichihara-Sheu} starting right after equation $(2.4)$ therein.

Proposition \ref{prop: pde existence} is proved by first constructing super-
and sub-solutions $\psi_1$ and $\psi_2$ to \eqref{eq: pde E} and then repeating
the arguments in \cite[Theorems 3.8, 3.9]{Ichihara-Sheu}. Even though the
equation is uniformly parabolic in \cite{Ichihara-Sheu}, the solution $v$ is
constructed, using the given super- and sub-solutions, via a sequence of localized problems, each of which is uniformly
parabolic, cf. \cite[Equation (3.6)]{Ichihara-Sheu}. Here, the sequence of
localized problems can be considered  on $(E_n)_{n\in \Natural}$, where $A$ is
uniformly elliptic in each $E_n$ due to Assumption \ref{ass: coeff}
$i)$.  To construct the super- and sub-solutions $\psi_1$
and $\psi_2$, for $\zeta > 0$, define
\begin{equation*}
\psi(t,x;\zeta) \dfn \phi_0(x) +
\frac{1}{\zeta}\log\left(\expec^{\prob^{\phi_0,x}}\bra{\exp\left(\zeta(v_0 -
      \phi_0)(X_t) +\zeta\int_0^t \mathfrak{F}[\phi_0](X_s)ds\right)}\right).
\end{equation*}
In view of Assumptions \ref{ass: long_run}, \ref{ass: wellpose Lphi0} and
equation \eqref{eq: v0 bound} it follows that $\psi(t,x;\zeta)$ is
well-defined and finite for $(t,x)\in [0,\infty)\times E$.  With $\psi_1 =
\psi(\cdot ; \ok)$ and $\psi_2 = \psi(\cdot; \uk)$, H\"{o}lder's inequality implies $\psi_2\leq \psi_1$. Moreover, one can check that $\psi_1$
and $\psi_2$ are super- and sub-solutions of \eqref{eq: pde E} respectively.
This fact follows from  the extension of the classical Feynman-Kac formula to
the current, locally elliptic, setup; see
\cite{Heath-Schweizer,guasoni.al.11}. Thus, Proposition \ref{prop: pde
  existence} holds.

Now we prove Proposition \ref{prop: comparison} which does not follow from \cite[Theorem 3.6]{Ichihara-Sheu}. Let us first prepare a prerequisite result.
\begin{lem}\label{lem: P^T wellposed}
 Let Assumptions \ref{ass: coeff}, \ref{ass: long_run}, and \ref{ass: wellpose
  Lphi0} and \ref{ass: long_run strong} hold.  Let $v$ be a classical solution to \eqref{eq: pde E} in Proposition
  \ref{prop: pde existence} with initial condition $v_0$ satisfying \eqref{eq: v0 bound}. Then, for any $T>0$, the martingale
problem for $\cL^{v, T-\cdot}$ on $E$ is well-posed. Hence the coordinate process does not hit the boundary of $E$ before $T$, $\prob^{v,x}_T$ a.s., for any $x\in E$.
\end{lem}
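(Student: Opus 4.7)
The plan is to exhibit a time-dependent Lyapunov function on $[0,T]\times E$ whose Dynkin generator under $\cL^{v,T-\cdot}$ is bounded above and which diverges uniformly in time as $x$ leaves compact subsets of $E$. With $\delta>1$ from Assumption \ref{ass: long_run strong}, I would set
\[
G(t,x) \dfn \delta\phi_0(x) - v(T-t,x) + M,\qquad (t,x)\in[0,T]\times E,
\]
where $M$ is a constant chosen below. Using $\phi_0\geq 0$ together with the bound $v-\phi_0\leq C_T$ on $[0,T]\times E$ from \eqref{eq: growth v}, one gets $G(t,x)\geq (\delta-1)\phi_0(x)-C_T+M$; in particular, for $M$ large enough $G\geq 0$, and by the first half of \eqref{eq: more phi_0 conditions}, $\inf_{t\in[0,T]}\inf_{x\in E\setminus E_n}G(t,x)\to\infty$ as $n\uparrow\infty$.

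Next I would compute $(\partial_t+\cL^{v,T-t})G$ using \eqref{eq: pde E}. Since $\partial_t v(T-t,\cdot)=-\fF[v](T-t,\cdot)$ (in the running time variable of $G$), the chain rule gives $\partial_t G=\fF[v]$, while $\cL^{v,T-t}$ acts linearly on $\delta\phi_0-v$. Substituting $\fF[v]=\cL v+\tfrac12\nabla v'\bar A\nabla v+V$ and applying Young's inequality in the weighted inner product $\bar A$,
\[
\delta\,\nabla\phi_0'\bar A\nabla v \;\leq\; \tfrac12\nabla v'\bar A\nabla v+\tfrac{\delta^2}{2}\nabla\phi_0'\bar A\nabla\phi_0,
\]
the uncontrolled $\nabla v'\bar A\nabla v$ terms cancel exactly and one is left with
\[
(\partial_t+\cL^{v,T-t})G \;\leq\; \delta\cL\phi_0+\tfrac{\delta^2}{2}\nabla\phi_0'\bar A\nabla\phi_0+V \;=\; \fF[\delta\phi_0],
\]
which, by the second half of \eqref{eq: more phi_0 conditions}, is uniformly bounded above on $E$ by some constant $K$.

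Finally, I would apply Itô's formula to $G(\cdot,X_\cdot)$ under $\prob^{v,x}_T$ on $[0,t\wedge\tau_n]$, with $\tau_n$ as in \eqref{eq: tau_n_def}. Since $X$ stays in the compact set $\bar E_n$ before $\tau_n$ and all coefficients of $\cL^{v,T-\cdot}$ are continuous there, the stochastic integral is a true martingale, so
\[
\expec^{\prob^{v,x}_T}\bra{G(t\wedge\tau_n,X_{t\wedge\tau_n})} \;\leq\; G(0,x)+Kt.
\]
On $\{\tau_n\leq t\}$ the left side is at least $(\delta-1)\inf_{E\setminus E_n}\phi_0-C_T+M$, which diverges as $n\uparrow\infty$; Chebyshev then forces $\prob^{v,x}_T[\tau_n\leq t]\to 0$, and since $t\in[0,T]$ was arbitrary this gives $\tau_n\uparrow\infty$ $\prob^{v,x}_T$-a.s. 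That is exactly well-posedness of the martingale problem for $\cL^{v,T-\cdot}$ on $[0,T]$.

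I expect the main subtlety to be the second step: one has no a priori estimate on $\nabla v$ (indeed $v$ is only known to satisfy \eqref{eq: growth v}), and the cancellation of the $\nabla v'\bar A\nabla v$ term is what makes a Lyapunov argument possible. This forces the sign choice $\delta\phi_0-v$ rather than $\delta\phi_0+v$ or just $\phi_0$, and it is precisely the quadratic-growth structure of $\fF$ together with the strengthened Lyapunov bound on $\fF[\delta\phi_0]$ from Assumption \ref{ass: long_run strong} that makes the cancellation close out to $\fF[\delta\phi_0]$ on the nose.
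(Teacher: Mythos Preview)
Your argument is correct and essentially the same as the paper's. The paper sets $\tilde{v}(t,x)=v(t,x)-\delta\phi_0(x)$ and checks the exact identity
\[
\cL^{v,T-\cdot}\tilde{v}=\partial_t\tilde{v}-\fF[\delta\phi_0]+\tfrac12(\nabla v-\delta\nabla\phi_0)'\bar A(\nabla v-\delta\nabla\phi_0)\geq \partial_t\tilde{v}-\fF[\delta\phi_0],
\]
then applies \cite[Theorem 10.2.1]{MR2190038} to $\phi_T(t,x):=-\tilde v(T-t,x)+K$, which is precisely your $G$. Your Young inequality is exactly the nonnegativity of the quadratic remainder $\tfrac12(\nabla v-\delta\nabla\phi_0)'\bar A(\nabla v-\delta\nabla\phi_0)$, and your hands-on Khasminskii/Chebyshev step replaces the citation to Stroock--Varadhan. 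One cosmetic point: on the horizon $[0,T]$ the conclusion is $\lim_n\tau_n\geq T$ a.s.\ (non-explosion before $T$), not ``$\tau_n\uparrow\infty$''.
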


\begin{proof}
Set $\tilde{v}(t,x) = v(t,x) - \delta\phi_0(x)$ for $(t,x) \in [0,T]\times E$,
where $\delta$ is from \eqref{eq: more phi_0 conditions}. It follows from
\eqref{eq: growth v} and \eqref{eq: more phi_0 conditions} that
 \begin{equation}\label{eq: tildev growth 1}
  \lim_{n\uparrow \infty} \sup_{(t,x) \in [0,T]\times E\setminus E_n}
  \tilde{v}(t,x) = \lim_{n\uparrow \infty} \sup_{(t,x) \in [0,T]\times E\setminus E_n} (v(t,x)-\phi_0(x) - (\delta-1)\phi_0(x))  =-\infty.
 \end{equation}
A direct calculation shows (note: $\partial_t \tilde{v} = \partial_t v$)
 \[
  \cL^{v, T-\cdot} \tilde{v} = \partial_t\tilde{v} -
  \mathfrak{F}[\delta\phi_0] + \frac{1}{2}\left(\nabla
    v-\delta\nabla\phi_0\right)'\bar{A}\left(\nabla
    v-\delta\nabla\phi_0\right) \geq \partial_t\tilde{v} - \mathfrak{F}[\delta\phi_0].
 \]
Since \eqref{eq: more phi_0 conditions} assumes  $\lim_{n\uparrow \infty}\sup_{x\in E\setminus E_n} \fF[\delta\phi_0] =-\infty$, there exists a constant $C$ such that
\begin{equation}\label{eq: Lv lb}
 -\partial_t \tilde{v} + \cL^{v, T-\cdot} \tilde{v} \geq C, \quad \text{on } (0,T]\times E.
\end{equation}
The well-posedness of the martingale problem for $\cL^{v,T-\cdot}$ on $E$ now
follows from \cite[Theorem 10.2.1]{MR2190038}, by defining $\phi_T(t,x)
\dfn -\tilde{v}(T-t,x) + K$ for some $K$ so that $\phi_T(t,x) \geq 1,
(t,x)\in[0,T]\times E$.  Such a $K$ exists in view of \eqref{eq: tildev growth
  1}. Note also that the coefficients $a_n,b_n$ in \cite[Theorem 10.2.1]{MR2190038} can easily be
constructed in the present setup, cf. \cite[p.250]{MR2190038}, and $\lambda$ there can be chosen as any positive constant larger than $-C$.


\nada{

In light of Assumption \ref{ass: wellpose Lphi0},
it suffices to show for all $x\in E$ and $t\geq 0$,
 \begin{equation*}\label{eq: PT density}
  \expec^{\prob^{\phi_0,x}}\bra{M_T} = 1, \quad \text{ where } M_t:= \mathcal{E}\pare{\int_0^{\cdot}\left(\nabla
        v-\nabla \phi_0\right)' \bar{A}a^{-1}(T-s, X_s)\, dW^0_s}_t,
 \end{equation*}
$W^0$ is a $\prob^{\phi_0,x}$-Brownian motion, and $\mathcal(\int_0^\cdot HdW^0)_t:= \exp\pare{\int_0^t H_s dW^0_s - \frac12 \int_0^t |H_s|^2 ds}$. Set
$\tilde{v}(t,x) = v(t,x) - \tilde{\phi}_0(x)$ for $(t,x) \in [0,T]\times E$, where $\tilde{\phi}_0 =
\delta\phi_0$ for the $\delta$ from \eqref{eq: more phi_0 conditions}. It follows from \eqref{eq: growth v} and \eqref{eq: more phi_0
  conditions} that
 \begin{equation}\label{eq: tildev growth 1}
  \lim_{n\uparrow \infty} \sup_{(t,x) \in [0,T]\times E\setminus E_n}
  \tilde{v}(t,x) = \lim_{n\uparrow \infty} \sup_{(t,x) \in [0,T]\times E\setminus E_n} (v(t,x)-\phi_0(x) - (\delta-1)\phi_0(x))  =-\infty.
 \end{equation}

Define a probability measure $\qprob^{n, x}$ via $d\qprob^{n,x}/d\prob^{\phi_0,x} = M_{\tau_n\wedge T}$.
Notice that $\cL^{v, T-\cdot}$ is the infinitesimal generator of $X$ under $\qprob^{n,x}$. Then applying Ito's formula on $\tilde{v}(T-\tau_n \wedge \cdot, X_{\tau_n \wedge \cdot})$ and utilizing \eqref{eq: Lv lb}
 \[
  \expec^{\qprob^{n,x}} \bra{\tilde{v}(T- \tau_n \wedge T, X_{\tau_n \wedge T})} \geq  \tilde{v}(T, x) +C T, \quad \text{ for any } n.
 \]
 Since $\tilde{v}(T-\tau_n\wedge T,X_{\tau_n\wedge T}) =\indic_{\{\tau_n < T\}}\tilde{v}(T-\tau_n,X_{\tau_n}) + \indic_{\{\tau_n\geq T\}} \tilde{v}(0,X_T)$ and \eqref{eq: tildev growth 1} implies that $\tilde{v}(0, x)$ is bounded from above by a constant, the previous inequality implies the existence of a
 constant $C$ such that
 \[
  \expec^{\qprob^{n,x}} \bra{\indic_{\{\tau_n < T\}} \tilde{v}(T-\tau_n,
    X_{\tau_n})}\geq \tilde{v}(T,x) + C(1+T).
\]
In view of \eqref{eq: tildev growth 1}, the above inequality yields
$\limsup_{n\rightarrow \infty} \qprob^{n,x}(\tau_n < T) =0$. Thus, by the
monotone convergence theorem,
 \[
 \begin{split}
  \expec^{\prob^{\phi_0,x}}[M_T] &= \lim_{n\rightarrow \infty}
  \expec^{\prob^{\phi_0,x}}\bra{M_{\tau_n\wedge T} \indic_{\{\tau_n \geq T\}}} =
  \lim_{n\rightarrow \infty} \expec^{\prob^{\phi_0,x}}\bra{M_{\tau_n\wedge T}} -
  \lim_{n\rightarrow \infty} \expec^{\prob^{\phi_0,x}}\bra{M_{\tau_n\wedge T} \indic_{\{\tau_n <T\}}}\\
  &= 1- \lim_{n\rightarrow \infty} Q^{n,x}(\tau_n <T) =1,
 \end{split}
 \]
proving the well-posedness of $\cL^{v, T-\cdot}$.

}


\end{proof}

\begin{proof}[Proof of Proposition \ref{prop: comparison}]
For the given $\tilde{v}_0 \geq v_0$ and associated solutions
$\tilde{v},v$  in Proposition \ref{prop: pde existence}, fix a $T>0$ and set $w(t,x) = \tilde{v}(T-t,x) - v(T-t,x)$, for
$t\leq T$ and $x\in E$. Since $\tilde{v},v$ solve the differential
expression in \eqref{eq: pde E} it follows
that
\[\partial_t w + \cL^{v, T-\cdot} w = -(1/2)\nabla w'\bar{A}\nabla w.\]
Then under $\prob^{v,x}_T$, which is the solution to the martingale problem for $\cL^{v, T-\cdot}$ in Lemma \ref{lem: P^T wellposed}, we have
\begin{equation*}
\uk \left(w(T,X_T) - w(0,x)\right) \leq \uk\int_0^T\nabla w'a (s,X_s)dW^{v}_s - \frac{1}{2}\uk^2\int_0^T\nabla w'A\nabla
w(s,X_s)dx,
\end{equation*}
where $W^v$ is a $\prob^{v,x}_T$-Brownian Motion and the inequality follows from $\barA \geq \uk A$. Exponentiating both sides of the previous inequality and taking $\prob^{v, x}_T$-expectations, we obtain
\begin{equation*}
e^{-\uk w(0,x)}\espalt{\prob^{v,x}_T}{}{e^{\uk w(T,X_T)}} \leq
\espalt{\prob^{v,x}_T}{}{\mathcal{E}\left(\uk\int_0^\cdot \nabla w'a(s,X_s)dW^v_s\right)_T} \leq 1.
\end{equation*}
Plugging in for $w= \tilde{v}-v$ and using $\tilde{v}_0 \geq v_0$ gives
\begin{equation*}
1 \geq
e^{-\uk(\tilde{v}(T,x)-v(T,x))}\espalt{\prob^{v,x}_T}{}{e^{\uk(\tilde{v}_0-v_0)(X_T)}}
\geq e^{-\uk(\tilde{v}(T,x)-v(T,x))},
\end{equation*}
which confirms the assertion since $\uk > 0$.
\end{proof}

\subsection{Proofs in Section \ref{subsec: conv}}\label{sec: conv proof}
Theorems \ref{thm: conv} and \ref{thm: pointwise conv} are proved in this section.
For $\hat{v}$ in Proposition \ref{prop: e-existence} and $x\in E$, to simplify notation, we denote
\[
\hat{\prob}^x := \prob^{\hat{v},x} \quad \text{ and } \quad \hat{\expec}^x := \expec^{\prob^{\hat{v},x}}.
\]
Throughout this section $C$ is a universal constant which may be different in different places and the assumptions of Theorem \ref{thm: conv} are enforced. In particular, $v_0$ is chosen to satisfy \eqref{eq: v0 abs bound}. The following facts regarding ergodic diffusions are used repeatedly
throughout the sequel:

\begin{rem}[Ergodic results]\label{rem: uni-local bdd}
Recall from Proposition \ref{prop: e-existence} $i)$ yields that $X$ is ergodic under $(\hat{\prob}^x)_{x\in E}$ with invariant density $\hat{m}$. Given a continuous non-negative function $f$ such that $f\in \bL^1(E,\hat{m})$, \cite{Pinchover-92} and \cite[Corollary 5.2]{Pinchover-04} prove
\begin{enumerate}[i)]
\item $\hat{\expec}^x [f(X_t)]<\infty$ for any $x\in E$ and $t>0$;
\item $\sup_{t\geq \delta}\sup_{x\in E_n} \hespalt{x}{}{f(X_t)} < \infty$ for any $\delta > 0$ and integer $n$;
\item $\lim_{t\rightarrow \infty} \hat{\expec}^x[f(X_t)] = \int_E f(x) \hat{m}(x)\, dx$ in $C(E)$.
\end{enumerate}
\end{rem}

To prove Theorems \ref{thm: conv} and \ref{thm: pointwise conv}, we first prepare several results.

\begin{lem}\label{lem: phi_0 int}
For $\phi_0$ in Assumption \ref{ass: long_run} and $\hat{m}$ in Proposition \ref{prop: e-existence} $i)$,
$\phi_0\in \bL^1(E,\hat{m})$.
\end{lem}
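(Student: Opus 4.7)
The plan is to bootstrap Proposition \ref{prop: e-existence}(iii) into exponential integrability of $\phi_0$ itself, from which $\bL^1(E,\hat{m})$-integrability will be immediate because $\phi_0\ge 0$. The key observation is that the strengthening in Assumption \ref{ass: long_run strong} supplies a $\delta>1$ with $\fF[\delta\phi_0](x)\to -\infty$ on the boundary of $E$, which is exactly the hypothesis needed in part (iii) of Proposition \ref{prop: e-existence}.

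Concretely, in the first step I would apply Proposition \ref{prop: e-existence}(iii) with the choice $\phi=\delta\phi_0\in C^3(E)\subset C^2(E)$ to obtain
\[
e^{-\uk(\hat{v}-\delta\phi_0)}\in\bL^1(E,\hat{m}).
\]
Next, pick the constant $C_1$ supplied by Proposition \ref{prop: e-existence}(ii), so that $\hat{v}\le\phi_0+C_1$ pointwise on $E$. Then
\[
-\uk(\hat{v}-\delta\phi_0)\;=\;\uk(\delta\phi_0-\hat{v})\;\ge\;\uk(\delta-1)\phi_0-\uk C_1,
\]
and exponentiating yields the pointwise bound $e^{\uk(\delta-1)\phi_0}\le e^{\uk C_1}e^{-\uk(\hat{v}-\delta\phi_0)}$. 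Integrating against $\hat{m}$ therefore gives $e^{\uk(\delta-1)\phi_0}\in\bL^1(E,\hat{m})$, and the elementary inequality $y\le (\uk(\delta-1))^{-1}e^{\uk(\delta-1)y}$ valid for $y\ge 0$, together with $\phi_0\ge 0$, finishes the proof that $\phi_0\in\bL^1(E,\hat{m})$.

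There is no substantive obstacle here; the argument is really a one-line combination of parts (ii) and (iii) of Proposition \ref{prop: e-existence} with \eqref{eq: more phi_0 conditions}. The only point worth verifying is that $\delta\phi_0$ actually meets the hypothesis of part (iii), i.e.\ that $\fF[\delta\phi_0]\to-\infty$ on the boundary, but this is precisely the second clause of \eqref{eq: more phi_0 conditions}. It is also worth noting that the conclusion I extract is strictly stronger than $\phi_0\in\bL^1(E,\hat{m})$, which suggests that the same exponential moment bound will be useful later when controlling expectations of $\phi_0(X_t)$ under $\hat{\prob}^x$ via Remark \ref{rem: uni-local bdd}.
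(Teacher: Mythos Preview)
Your proof is correct and follows essentially the same approach as the paper's own proof: apply Proposition \ref{prop: e-existence}(iii) with $\phi=\delta\phi_0$ (using Assumption \ref{ass: long_run strong} to verify the boundary hypothesis), combine with the bound $\hat{v}\le\phi_0+C$ from Proposition \ref{prop: e-existence}(ii) to deduce $e^{\uk(\delta-1)\phi_0}\in\bL^1(E,\hat{m})$, and conclude via $\phi_0\ge 0$. The paper's version is more terse but the logic is identical.
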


\begin{proof}
Set $\tilde{\phi}_0 \dfn \delta\phi_0$.  From
Proposition \ref{prop: e-existence} $ii)$,
$\uk(\delta-1)\phi_0 = \uk(\tilde{\phi}_0 - \phi_0) \leq \uk(\tilde{\phi}_0 -
\hat{v}) + \uk C = -\uk(\hat{v}-\tilde{\phi}_0) + \uk C$ for some $C>0$.  Then $e^{\uk(\delta-1)\phi_0}\in \bL^1(E,\hat{m})$ follows Proposition \ref{prop:
  e-existence} $iii)$ and Assumption \ref{ass: long_run strong}. Since $\phi_0$ is non-negative, then the statement is confirmed.
\end{proof}

\begin{cor}\label{cor: ui}
Let $x\in E$, $0\leq t\leq T$ and $\cbra{\tau_n}_{n\in\Natural}$ be as in \eqref{eq: tau_n_def}. Then the family of random variables
\begin{equation*}
\{h(T-t\wedge\tau_n, X_{t\wedge\tau_n});\, n \in \Natural\},
\end{equation*}
is $\hat{\prob}^x$-uniformly integrable.

\end{cor}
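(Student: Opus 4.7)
Denote $Y_n := h(T - t\wedge\tau_n, X_{t\wedge\tau_n})$. I would establish uniform integrability of $\cbra{Y_n}_{n\in\Natural}$ by controlling $\cbra{Y_n^+}$ and $\cbra{Y_n^-}$ separately, using an exponential supermartingale bound for the positive tail and a Feynman--Kac type lower estimate combined with the strong Markov property for the negative tail.

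\textbf{Uniform integrability of $\cbra{Y_n^+}$.} Apply It\^{o}'s formula under $\hat{\prob}^x$ to the non-negative process $Z_s := e^{\uk h(T-s, X_s)}$ on $[0,T]$. Using the PDE \eqref{eq: pde h}, a direct computation yields
\begin{equation*}
dZ_s = -\tfrac{1}{2}\uk Z_s \bra{\nabla h' \barA \nabla h - \uk \nabla h' A \nabla h}(T-s, X_s)\, ds + dM_s,
\end{equation*}
for some continuous local martingale $M$. Assumption \ref{ass: coeff} ii) gives $\barA \geq \uk A$, so the bracket is non-negative and hence the drift is non-positive; consequently $Z$ is a non-negative local supermartingale, hence (by Fatou) a true supermartingale. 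Optional stopping at the bounded stopping time $t\wedge\tau_n$ produces $\hat{\expec}^x[e^{\uk Y_n}] = \hat{\expec}^x[Z_{t\wedge\tau_n}]\leq Z_0 = e^{\uk h(T,x)}$ uniformly in $n$. Since $e^{\uk y^+} \leq 1 + e^{\uk y}$, this gives $\sup_n \hat{\expec}^x[e^{\uk Y_n^+}] < \infty$, and the de la Vall\'{e}e--Poussin criterion yields uniform integrability of $\cbra{Y_n^+}$.

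\textbf{Uniform integrability of $\cbra{Y_n^-}$.} From \eqref{eq: pde h}, $\partial_t h - \cL^{\hat v} h = \tfrac{1}{2}\nabla h' \barA \nabla h \geq 0$, so $h$ is a supersolution of the linear equation $\partial_t u = \cL^{\hat v} u$ with initial datum $v_0 - \hat v$. A $\tau_n$-localization of It\^{o}'s formula applied to $h(s-u, X_u)$ on $[0, s\wedge\tau_n]$, combined with Fatou, yields the Feynman--Kac style bound
\begin{equation*}
h(s,x) \geq \hat{\expec}^x\bra{(v_0 - \hat v)(X_s)},\qquad (s,x) \in [0,\infty)\times E.
\end{equation*}
Combining \eqref{eq: v0 abs bound} with Proposition \ref{prop: e-existence} ii) gives $v_0 - \hat v \geq -2\phi_0 - C$ on $E$, hence $h(s,x) \geq -2\hat{\expec}^x[\phi_0(X_s)] - C$. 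Substituting into the definition of $Y_n$ and invoking the strong Markov property of $\hat{\prob}^x$,
\begin{equation*}
Y_n \geq -2\,\hat{\expec}^{X_{t\wedge\tau_n}}\bra{\phi_0(X_{T - t\wedge\tau_n})} - C = -2\,\hat{\expec}^x\bra{\phi_0(X_T)\,\big|\,\F_{t\wedge\tau_n}} - C.
\end{equation*}
Lemma \ref{lem: phi_0 int} and Remark \ref{rem: uni-local bdd} i) give $\phi_0(X_T) \in \bL^1(\hat{\prob}^x)$, so the conditional expectations on the right are all projections of a single integrable random variable and thus form a uniformly integrable family of random variables. Therefore $\cbra{Y_n^-}$ is dominated by a uniformly integrable family, and is itself uniformly integrable; combined with the first step this gives uniform integrability of $\cbra{Y_n}$.

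\textbf{Main obstacle.} The delicate step is the Feynman--Kac lower bound on $h$: because \eqref{eq: pde h} is nonlinear and $A$ only locally elliptic, one cannot invoke a textbook comparison principle for linear parabolic equations directly. The remedy is to run It\^{o}'s formula on the stopped interval $[0, s\wedge\tau_n]$, where the local martingale part is a bounded true martingale, and to pass $n\to\infty$ using Fatou after noting that $|v_0 - \hat v|$ is dominated by $2\phi_0 + |\psi_0| + C$, an $\hat m$-integrable function in view of Lemma \ref{lem: phi_0 int} and the bounds on $\psi_0$ implicit in Assumption \ref{ass: psi}. The remaining ingredients---the exponential supermartingale calculation, de la Vall\'{e}e--Poussin, and uniform integrability of conditional expectations of a fixed $\bL^1$ random variable---are standard.
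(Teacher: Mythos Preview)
Your overall strategy---split into $Y_n^+$ and $Y_n^-$, control the positive part via the exponential supermartingale and the negative part via a pointwise lower bound on $h$ plus the strong Markov property---is exactly the paper's approach. The treatment of $Y_n^+$ is correct and coincides with the paper.

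The gap is in your derivation of the Feynman--Kac lower bound $h(s,x)\geq\hat\expec^x[(v_0-\hat v)(X_s)]$. After localizing, you have $h(s,x)\geq\hat\expec^x[h(s-s\wedge\tau_n,X_{s\wedge\tau_n})]$, and you want to pass $n\to\infty$ via Fatou. But Fatou requires an integrable \emph{lower} bound on the sequence $h(s-s\wedge\tau_n,X_{s\wedge\tau_n})$, uniform in $n$. Your proposed remedy---bounding $|v_0-\hat v|$---only controls the limit $h(0,X_s)$, not the intermediate values $h(s-\tau_n,X_{\tau_n})$ on $\{\tau_n<s\}$, where $X_{\tau_n}\in\partial E_n$ escapes to $\partial E$. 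No a~priori lower bound on $h$ over $(0,s]\times E$ is available at this stage of the argument (indeed, producing one is essentially what you are trying to prove), so the Fatou step as written is circular.

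The paper avoids this by reusing the exponential supermartingale from the positive part: since $Z_u=e^{\uk h(s-u,X_u)}$ is a nonnegative supermartingale, $\hat\expec^x[e^{\uk h(0,X_s)}]\leq e^{\uk h(s,x)}$ holds at the deterministic time $s$ (here Fatou is harmless because $Z\geq 0$), and Jensen then gives $h(s,x)\geq\hat\expec^x[(v_0-\hat v)(X_s)]$. Once you have this lower bound, your strong Markov argument for $Y_n^-$ goes through verbatim.
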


\begin{proof}
Applying Ito's formula to $h(t-\cdot, X_\cdot)$ and utilizing both \eqref{eq:
  pde h} and $\barA\geq\uk A$, we obtain, for any stopping time $\tau$ for
which $\tau \leq t$,
\begin{equation*}\label{eq: h ito}
\begin{split}
 \uk h(t-\tau, X_\tau) &\leq \uk h(t, x) - \frac{\uk^2}{2} \int_0^\tau \nabla h' A \nabla h(t-u, X_u)
\,du + \int_0^\tau \uk \nabla h'a(t-u, X_u) \, d\hat{W}_u,
\end{split}
\end{equation*}
where $\hat{W}$ is a $\hat{\prob}^{x}-$Brownian motion. Exponentiating both sides of the
previous inequality and taking expectations gives
\begin{equation}\label{eq: h lb exp ub}
\hat{\expec}^x\bra{e^{\uk h(t-\tau,X_\tau)}} \leq e^{\uk
  h(t,x)}\hat{\expec}^x\bra{\mathcal{E}\left(\uk\int_0^\cdot\nabla
    h'a(t-u,X_u)d\hat{W}_u\right)_\tau}\leq e^{\uk h(t,x)},
\end{equation}
and thus, at $\tau = s$ for the fixed time $s\leq t$:
\begin{equation}\label{eq: h lb exp}
\frac{1}{\uk} \log \hat{\expec}^x \bra{e^{\uk h(t-s, X_s)}}\leq h(t,x).
\end{equation}
Proposition \ref{prop: e-existence} $ii)$ and \eqref{eq: v0 abs bound} imply
both $\hat{v}\leq \phi_0 + C$ and $v_0\geq -\phi_0 - C$. Thus, \eqref{eq: h lb exp} with $s=t$ and Jensen's inequality combined imply
\begin{equation}\label{eq: h lb}
h(t,x)\geq \frac{1}{\uk}\log\hespalt{x}{}{e^{\uk(v_0-\hat{v})(X_t)}}\geq
\hespalt{x}{}{(v_0-\hat{v})(X_t)} \geq -2\hespalt{x}{}{\phi_0(X_t)} - C,
\end{equation}
for some constant $C$. Therefore, with $h_-\dfn \max\{-h,0\}$, the Markov property and $\phi_0\geq 0$ combined yield
\[
h_-(T-t\wedge\tau_n, X_{t\wedge\tau_n}) \leq C+ 2\hespalt{X_{t\wedge\tau_n}}{}{\phi_0(X_{T-t\wedge\tau_n})} = C + 2 \hat{\expec}^x[\phi_0(X_T) \,|\, \F_{t\wedge \tau_n}],
\]
By Lemma \ref{lem: phi_0 int} and Remark \ref{rem: uni-local bdd} $i)$, we
have $\hespalt{x}{}{\phi_0(X_T)} < \infty$. Thus, the random variables
$\{h_-(T-t\wedge \tau_n, X_{t\wedge \tau_n}); n=1,2,...\}$ are uniformly integrable under $\hat{\prob}^x$.

As for the positive part, set $h_+\dfn\max\{h,0\}$. Since for any constant
$k>0$, $e^{kh_+} \leq 1 + e^{kh}$, \eqref{eq: h lb exp ub} implies there is a
$C = C(T,x) > 0$ so that
\begin{equation*}
\hespalt{x}{}{e^{\uk h(T-t\wedge\tau_n,X_{t\wedge\tau_n})_+}} \leq C, \quad \text{ for any } n.
\end{equation*}
The uniform integrability of $\{h(T-t\wedge\tau_n,X_{t\wedge\tau_n}); n\in \Natural\}$ now follows, finishing the proof.
\nada{
\begin{equation*}
h(T-t\wedge\tau_n,X_{t\wedge\tau_n}) \leq h(T,x) + M^T_{t\wedge\tau_n},
\end{equation*}
where $M^T_\cdot = \int_0^\cdot \nabla
h'a(T-u,X_u)\,d\hat{W}_u$. Now
\begin{equation*}
\begin{split}
\hespalt{x}{}{(M^T_{t\wedge\tau_n})^2} =& \hespalt{x}{}{\int_0^{t\wedge\tau_n}\nabla h'A\nabla h
  (T-u,X_u)du}\leq
\frac{2}{\uk}\,\hespalt{x}{}{\frac{1}{2}\int_0^{t\wedge\tau_n}\nabla
  h'\bar{A}\nabla h(T-u,X_u)du}\\
=&\frac{2}{\uk}\left(h(T,x) - \hespalt{x}{}{h(T-t\wedge\tau_n,X_{t\wedge\tau_n})}\right)\leq \frac{2}{\uk}\left(h(T,x) + \hespalt{x}{}{h_-(T-t\wedge\tau_n,X_{t\wedge\tau_n})}\right),
\end{split}
\end{equation*}
where $\barA \geq \uk A$ is applied to obtain the first inequality.
Now, $h_-(T-t\wedge\tau_n,X_{t\wedge\tau_n})$ is $\hat{\prob}^x$ uniformly integrable, in particular, $\sup_n\hespalt{x}{}{h_-(T-t\wedge\tau_n,X_{t\wedge\tau_n})} <\infty$.
This gives that $\sup_n
  \hespalt{x}{}{(M^T_{t\wedge\tau_n})^2} < \infty$ which in turn
  implies the $\hat{\prob}^x$ uniform integrability of $h_+(T-t\wedge\tau_n,
  X_{t\wedge\tau_n})$ and finishes the proof.}
\end{proof}

The next result identifies an upper bound on $h(t, \cdot)$, uniformly in $t\geq 0$.  The statement and proof are similar to \cite[Lemma 4.7]{Ichihara-Sheu}.

\begin{lem}\label{lem: main ub}
Let $\cJ(x) \dfn J\left(1+\phi_0(x) + \hat{v}_{-}(x)\right)$, for $x\in E$. Here $\hat{v}_- := \max\{-\hat{v}, 0\}$.
Then $\cJ\in C(E,\Real)\cap \mathbb{L}^1(E,\hat{m})$ and there exists a sufficiently large constant $J$ such that
\begin{equation}\label{eq: h ub}
\sup_{t\geq 0} h(t,x) \leq \cJ(x),\quad x \in E.
\end{equation}

\end{lem}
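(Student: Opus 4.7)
The proof splits into verifying the two properties of $\cJ$---continuity and $\hat m$-integrability---and then establishing the uniform pointwise upper bound on $h$.

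\emph{Continuity and integrability.} Continuity of $\cJ$ is immediate from $\phi_0 \in C^3(E)$ and $\hat v \in C^2(E)$ (cf.\ Proposition~\ref{prop: e-existence}), which makes $\hat v_- = \max\{-\hat v,0\}$ continuous. For the integrability, Lemma~\ref{lem: phi_0 int} already gives $\phi_0 \in \bL^1(E, \hat m)$. For $\hat v_-$, combining the lower bound \eqref{eq: hat v lb} ($\hat v \geq \psi_0 - C$ on $E$) with the first estimate in \eqref{eq: main ub main ub} ($\psi_0 \geq -K\phi_0 - C'$) produces $\hat v_- \leq K\phi_0 + (C+C')$ on $E$, so $\hat v_- \in \bL^1(E,\hat m)$ and consequently $\cJ \in \bL^1(E,\hat m)$.

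\emph{Upper bound---reduction to a supersolution.} Following \cite[Lemma 4.7]{Ichihara-Sheu}, the plan is to construct a classical supersolution $\tilde v \in C^{1,2}((0,\infty)\times E) \cap C([0,\infty)\times E)$ of \eqref{eq: pde E} of the form $\tilde v(t,x) = \hat\lambda t + \hat v(x) + g(x)$ with $g \in C^2(E)$ satisfying $g \leq \cJ$ pointwise and $\tilde v(0,\cdot) \geq v_0$. The Girsanov computation in the proof of Proposition~\ref{prop: comparison} adapts directly to compare a solution with a supersolution: repeating it with $w(t,x) = \tilde v(T-t, x) - v(T-t, x)$ now yields the inequality $\partial_t w + \cL^{v, T-\cdot} w \leq -\frac{1}{2}\nabla w' \bar A \nabla w$, which is exactly what is needed to preserve the final inequality $\tilde v(T,x) \geq v(T,x)$. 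Consequently $h(t,x) = v(t,x) - \hat\lambda t - \hat v(x) \leq g(x) \leq \cJ(x)$ uniformly in $t$. Exploiting $\fF[\hat v] = \hat\lambda$, the supersolution requirement $\partial_t \tilde v \geq \fF[\tilde v]$ collapses to the stationary inequality
\begin{equation*}
\cL^{\hat v} g + \frac{1}{2}\nabla g' \bar A \nabla g \leq 0 \quad \text{on } E.
\end{equation*}

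\emph{Construction of $g$ and main obstacle.} A natural candidate is $g(x) = C_1 + \delta \phi_0(x) - \hat v(x)$, which lies in $C^2(E)$ and satisfies $g \leq C_1 + \delta\phi_0 + \hat v_- \leq \cJ$ provided $J \geq \max\{C_1,\delta,1\}$, while $\tilde v(0,\cdot) = C_1 + \delta\phi_0 \geq v_0$ reduces to \eqref{eq: v0 abs bound} for $\delta \geq 1$ and $C_1$ sufficiently large. Since $\fF$ is invariant under additive constants in its argument, the stationary inequality becomes simply $\fF[\delta\phi_0] \leq \hat\lambda$ on $E$; Assumption~\ref{ass: long_run strong} furnishes this outside any sufficiently large $E_{n_0}$, but on the compact core $E_{n_0}$ the quantity $\fF[\delta\phi_0]$ is merely bounded above and may exceed $\hat\lambda$. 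The main obstacle is closing this interior gap, which I would do by exploiting the quantitative estimate $\fF[\delta\phi_0] + \alpha(\delta\phi_0 - \psi_0) \leq C$ from \eqref{eq: main ub main ub} together with the divergence $\phi_0 - \psi_0 \to \infty$ on $\partial E$ from \eqref{eq: psi_0 hat v}; jointly these permit a smooth, compactly supported correction to $g$ that enforces the supersolution inequality on $E_{n_0}$ without violating either $g \leq \cJ$ or the already established inequality on $E \setminus E_{n_0}$. Verifying this interplay between the clauses of Assumption~\ref{ass: psi} is where the delicate work of the proof will concentrate.
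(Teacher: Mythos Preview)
Your treatment of the continuity and $\hat m$-integrability of $\cJ$ is correct and matches the paper.

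For the upper bound, however, your plan has a genuine gap. You correctly observe that the candidate $g = C_1 + \delta\phi_0 - \hat v$ reduces the supersolution property to the global inequality $\fF[\delta\phi_0] \leq \hat\lambda$, which Assumption~\ref{ass: long_run strong} only supplies outside a compact core. Your proposed fix---a smooth compactly supported correction $\chi$---is neither constructed nor justified, and the assumption you invoke for it does not help: on the compact core $E_{n_0}$ the quantity $\delta\phi_0 - \psi_0$ is itself bounded, so the estimate $\fF[\delta\phi_0] + \alpha(\delta\phi_0 - \psi_0) \leq C$ from \eqref{eq: main ub main ub} degenerates to the trivial bound $\fF[\delta\phi_0] \leq C'$ there and contributes nothing toward forcing $\fF[\delta\phi_0 + \chi] \leq \hat\lambda$. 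Moreover, any compactly supported $\chi$ has regions where the second-order part of $\cL\chi$ is nonnegative, and the gradient correction only adds a nonnegative quadratic term, so there is no evident mechanism to push $\fF$ below $\hat\lambda$ on the core without destroying the inequality on the transition annulus. A stationary supersolution of the form $\hat\lambda t + \hat v + g$ with $g$ satisfying your constraints may simply fail to exist under the stated hypotheses.

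The paper does \emph{not} construct a supersolution. After reducing to $v_0 = \phi_0$ via comparison, it sets $w(t,x) = \delta\phi_0(x) + \hat\lambda t - v(t,x)$ and applies It\^o's formula to $e^{\alpha s}\,w(T-s,X_s)$ under the measure $\prob^{v,x}_T$. The exponential weight is precisely the device that absorbs the failure of $\fF[\delta\phi_0] \leq \hat\lambda$: the resulting drift is $\fF[\delta\phi_0] - \hat\lambda + \alpha w$, and since $w \leq \delta\phi_0 - \hat v + C \leq \delta\phi_0 - \psi_0 + C'$ by \eqref{eq: w ub} and \eqref{eq: hat v lb}, the second clause of \eqref{eq: main ub main ub} bounds this combination uniformly on all of $E$. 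This yields the uniform-in-$T$ moment bound $\espalt{\prob^{v,x}_T}{}{\phi_0(X_T)} \leq C(1 + \phi_0(x) + \hat v_-(x))$. A separate It\^o estimate then gives $h(T,x) \leq \espalt{\prob^{v,x}_T}{}{(\phi_0 - \hat v)(X_T)}$, and combining the two (using $-\hat v \leq K\phi_0 + C$) finishes. The parameter $\alpha$ in Assumption~\ref{ass: psi} thus functions as a discount rate in a probabilistic estimate, not as input to a bump-function construction.
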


\begin{proof}
Due to \eqref{eq: hat v lb} and the first inequality in \eqref{eq: main ub main ub}, $\hat{v}_{-} \leq C -\psi_0 \leq C + K\phi_0$, hence $\cJ\in \mathbb{L}^1(E,\hat{m})$ follows from Lemma \ref{lem: phi_0 int}. Moreover it is clear that $\cJ\in C(E,\Real)$.

Let us prove \eqref{eq: h ub}. Since $v_0$ satisfies \eqref{eq: v0 abs bound}, $v_0
\leq \phi_0 + C$ for some constant $C$. Thus, by the comparison principle in
Proposition \ref{prop: comparison} it suffices to prove \eqref{eq: h ub} when $v_0 = \phi_0 + C$.  Additionally, since $\mathfrak{F}[v+C] = \mathfrak{F}[v]$ for
any constant $C$, one can set $v_0 = \phi_0$ without loss of generality. Thus,
let $v$ be the solution of \eqref{eq: pde E} with initial condition $\phi_0$
and let $h(t,x) = v(t,x) - \hat{\lambda}t -\hat{v}(x)$.

Set $w(t,x) \dfn \delta \phi_0(x) + \hat{\lambda} t - v(t,x)$. We first derive
upper and lower bounds for $w$. On the one hand, note that $w(t,x) = \delta
\phi_0(x) -\hat{v}(x) + (\hat{v}(x) + \hat{\lambda} t - v(t,x))$ and that
$\hat{v}(x) + \hat{\lambda} t$ satisfies  \eqref{eq: pde E} with the initial
condition $\hat{v}$. Proposition \ref{prop: e-existence} $ii)$ and
$\phi_0=v_0$ give $\hat{v}(x) \leq   v_0(x) + C$ and hence a second
application of  Proposition
\ref{prop: comparison}  yields $\hat{v}(x) + \hat{\lambda} t \leq v(t,x) + C$ on $[0,T]\times E$. Thus
\begin{equation}\label{eq: w ub}
 w(t,x) \leq \delta \phi_0(x) - \hat{v}(x) + C,  \quad \text{ on } [0,T]\times E.
\end{equation}
On the other hand, \eqref{eq: growth v} implies the existence of constant $C_T$, which may depend on $T$, such that $v(t,x) \leq \phi_0(x) + C_T$ on $[0,T]\times E$. Then \begin{equation}\label{eq: w lb}
w(t,x) \geq (\delta-1) \phi_0(x) + \hat{\lambda} t - C_T \geq \tilde{C}_T, \quad \text{ on } [0,T]\times E,
\end{equation}
for some constant $\tilde{C}_T$, where the second inequality follows from $\delta>1$ and $\phi_0 \geq 0$.

A direct calculation shows
$
\cL^{v, T-\cdot}w = \mathfrak{F}[\delta\phi_0] + w_t - \hat{\lambda} -
(1/2)\nabla w'\bar{A}\nabla w
$, which implies $-w_t + \cL^{v, T-\cdot} w \leq \fF[\delta \phi_0] - \hat{\lambda}$. For the given $\alpha$ in \eqref{eq: main ub main ub}, applying Ito's formula to $e^{\alpha \cdot} w(T-\cdot, X_\cdot)$ and utilizing the previous inequality, we obtain for each $n$ (recall $\tau_n$ from \eqref{eq: tau_n_def}):
\[
\espalt{\prob^{v,x}_{T}}{}{e^{\alpha (T\wedge
    \tau_n)}w(T-T\wedge\tau_n,X_{T\wedge\tau_n})}\leq w(T,x) +
\espalt{\prob^{v,x}_{T}}{}{\int_0^{T\wedge\tau_n}e^{\alpha
    s}(\fF[\delta\phi_0] - \hat{\lambda} + \alpha w)(T-s,X_s)ds},
\]
Since $w$ is bounded from below (cf. \eqref{eq: w lb}), applying Fatou's lemma on the left-hand-side yields
\begin{equation*}\label{eq: square temp1}
\begin{split}
e^{\alpha T}\expec^{\prob^{v,x}_T}\bra{w(0, X_T)}\leq w(T,x) + \liminf_{n\uparrow\infty}\left(\espalt{\prob^{v,x}_{T}}{}{\int_0^{T\wedge\tau_n}e^{\alpha
    s}(\fF[\delta \phi_0] - \hat{\lambda} + \alpha
    w)(T-s,X_s)ds}\right).
\end{split}
\end{equation*}
On the right-hand-side, \eqref{eq: main ub main ub} implies $M:= \sup_{x\in E} (\fF[\delta\phi_0] + \alpha(\delta\phi_0(x) -
\psi_0))<\infty$. Therefore \eqref{eq: w ub} and \eqref{eq: hat v lb} combined yield
\begin{equation*}\label{eq: square temp3}
\fF[\delta \phi_0]- \hat{\lambda} + \alpha w \leq \fF[\delta
\phi_0] - \hat{\lambda} + \alpha(\delta\phi_0- \hat{v}+C) \leq \fF[\delta \phi_0] - \hat{\lambda} + \alpha(\delta \phi_0 -\psi_0) + C
\leq M - \hat{\lambda} + C.
\end{equation*}
Set $\hat{M} = \max\{M-\hat{\lambda}+C,0\}/\alpha$.
Combining the previous two inequalities and using $w(0, x)= \delta \phi_0(x)- v_0(x) = (\delta -1) \phi_0(x)$, we obtain
\begin{align*}
(\delta -1) e^{\alpha T}\espalt{\prob^{v,x}_{T}}{}{\phi_0(X_T)} &\leq w(T,x) + \hat{M} (e^{\alpha T}-1)\leq e^{\alpha T}\left(\delta\phi_0(x) + \hat{v}_-(x) + C+\hat{M}\right),
\end{align*}
where the second inequality follows from \eqref{eq: w ub}. Thus, by taking $C > 0$ sufficiently large,
\begin{equation}\label{eq: pTx phi0 expec ub}
\espalt{\prob^{v,x}_{T}}{}{\phi_0(X_T)} \leq C(1 + \phi_0(x) +\hat{v}_{-}(x)), \quad \text{ for all } x\in E \text{ and } T\geq 0.
\end{equation}

Calculation shows that $h$ satisfies $h_t = \cL^{v,T-\cdot}h - (1/2)\nabla
h'\bar{A}\nabla h \leq \cL^{v,T-\cdot}h - (1/2)\uk\nabla h'A\nabla h$, since
$\bar{A}\geq \uk A$. Applying Ito's formula to $\uk h(T-\cdot, X_\cdot)$ yields
 \[
 \begin{split}
  \uk\left(\expec^{\prob^{v,x}_T}[h(0, X_T)] - h(T, x)\right)&\geq
  \expec^{\prob^{v,x}_T}\bra{\int_0^T \frac{\underline{\kappa}^2}{2} (\nabla
    h)' A \nabla h (T-s, X_s) \, ds + \underline{\kappa}\int_0^T (\nabla h)' a
    dW^T_s}\\
    & \geq -\log \expec^{\prob^{v, x}_T}\bra{\mathcal{E}\pare{-\uk \int (\nabla h)' a dW^T_s}_T}\geq 0,
  \end{split}
 \]
 where $W^T$ is a $\prob^{v,x}_T-$Brownian motion and the second inequality follows from Jensen's inequality.  Thus, since $h(0,x) =
 \phi_0(x)-\hat{v}(x)$, for any $T \geq 0$ and $x\in E$,
\begin{equation*}
h(T,x) \leq \expec^{\prob^{v,x}_T}\bra{\phi_0(X_T) - \hat{v}(X_T)} \leq C +
(K+1)\expec^{\prob^{v,x}_T}\bra{\phi_0(X_T)} \leq C + (K+1)C(1+\phi_0(x)+\hat{v}_{-}(x)),
\end{equation*}
where the second inequality uses the first inequality in \eqref{eq: main ub main ub} and \eqref{eq: hat v lb}, the third inequality uses \eqref{eq: pTx phi0 expec
  ub}. Hence \eqref{eq: h ub} now holds by taking $J$ large
enough, finishing the proof.
\end{proof}

For $0\leq t\leq T$ and $x\in E$ set
\begin{equation}\label{eq: fT def}
f^{t,T}(x) := \frac{1}{2}\hespalt{x}{}{\int_0^t (\nabla h)' \overline{A} \nabla h (T-s, X_s)\, ds}.
\end{equation}
The next result gives a weak form of the convergence in Theorem \ref{thm: conv} $i)$.

\begin{prop}\label{prop: quad var conv}
 For all $t\geq 0$,
 \begin{equation}\label{eq: int quad var conv}
 \lim_{T\rightarrow \infty} \int_{E} f^{t,T}(x)\hat{m}(x) \, dx =0.
 \end{equation}
\end{prop}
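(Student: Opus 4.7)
The plan is to identify $f^{t,T}(x)$ as a telescoping difference $h(T,x) - \hat\expec^x[h(T-t,X_t)]$ via Itô's formula, and then exploit the invariance of $\hat m$ to reduce the claim to the convergence of a bounded monotone sequence.

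First I would apply Itô's formula to $h(T-s, X_s)$ under $\hat\prob^x$. Since the PDE \eqref{eq: pde h} satisfied by $h$ gives $-\partial_t h + \cL^{\hat v} h = -\tfrac12 \nabla h' \barA \nabla h$, and since $X$ has generator $\cL^{\hat v}$ under $\hat\prob^x$, localizing at $\tau_n$ from \eqref{eq: tau_n_def} yields
\begin{equation*}
h(T - t\wedge \tau_n, X_{t\wedge\tau_n}) - h(T,x) = -\tfrac12 \int_0^{t\wedge\tau_n} \nabla h' \barA \nabla h (T-s, X_s)\, ds + \int_0^{t\wedge \tau_n} \nabla h' a (T-s, X_s)\, d\hat W_s,
\end{equation*}
where $\hat W$ is a $\hat\prob^x$-Brownian motion. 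Taking expectations kills the martingale term. The family $\{h(T-t\wedge\tau_n, X_{t\wedge\tau_n})\}_n$ is $\hat\prob^x$-uniformly integrable by Corollary \ref{cor: ui}, and $\tau_n \uparrow \infty$ a.s. since $X$ is non-explosive under $\hat\prob^x$ (Proposition \ref{prop: e-existence}). Combined with monotone convergence on the nonnegative integrand $\nabla h' \barA \nabla h$, I obtain
\begin{equation*}
f^{t,T}(x) = h(T,x) - \hat\expec^x[h(T-t, X_t)].
\end{equation*}

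Next I would integrate this identity against $\hat m$. Lemma \ref{lem: main ub} gives $h(t,\cdot)\leq \cJ \in \bL^1(E,\hat m)$ uniformly in $t$, while \eqref{eq: h lb} combined with Lemma \ref{lem: phi_0 int} and the invariance of $\hat m$ shows
\begin{equation*}
\int_E \hat\expec^x[\phi_0(X_t)]\, \hat m(x) dx = \int_E \phi_0(x)\, \hat m(x) dx < \infty,
\end{equation*}
so $h(t,\cdot)\in\bL^1(E,\hat m)$ for every $t\geq 0$. Invariance then legitimates $\int_E \hat\expec^x[h(T-t, X_t)]\, \hat m(x) dx = \int_E h(T-t, y)\, \hat m(y) dy$, giving
\begin{equation*}
\int_E f^{t,T}(x)\, \hat m(x) dx = g(T) - g(T-t), \qquad g(T) \dfn \int_E h(T, y)\, \hat m(y) dy.
\end{equation*}
Because $f^{t,T}\geq 0$, $g$ is non-decreasing on $[t,\infty)$; because $h \leq \cJ \in \bL^1(E,\hat m)$, $g$ is bounded above by $\int_E \cJ\, d\hat m < \infty$. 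Hence $g(T)$ converges as $T\to\infty$, which forces $g(T) - g(T-t) \to 0$ and yields \eqref{eq: int quad var conv}.

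The main technical obstacle is the justification of the identity $f^{t,T}(x) = h(T,x) - \hat\expec^x[h(T-t, X_t)]$: one must handle the localization rigorously, ruling out explosion under $\hat\prob^x$ and, more delicately, controlling both tails of $h(T-t\wedge\tau_n, X_{t\wedge\tau_n})$ uniformly in $n$. Fortunately Corollary \ref{cor: ui} is tailor-made for this, so the argument goes through cleanly. The remaining integration-against-$\hat m$ step is then essentially a Fubini/invariance computation once $h(T-t,\cdot) \in \bL^1(E,\hat m)$ is confirmed.
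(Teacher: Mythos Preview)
Your proposal is correct and follows essentially the same route as the paper: derive the identity $f^{t,T}(x)=h(T,x)-\hat\expec^x[h(T-t,X_t)]$ via It\^o's formula together with Corollary~\ref{cor: ui}, integrate against $\hat m$ using invariance to obtain a telescoping difference $g(T)-g(T-t)$, and conclude from monotonicity and the upper bound $h\leq\cJ\in\bL^1(E,\hat m)$ of Lemma~\ref{lem: main ub}. The only cosmetic difference is that the paper phrases the invariance step through the transition density $\hat p(t,x,y)$, whereas you invoke it directly; your added check that $h(T-t,\cdot)\in\bL^1(E,\hat m)$ is a welcome bit of care.
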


\begin{proof}

 Corollary \ref{cor: ui} and Ito's formula applied to $h(T-\cdot, X_\cdot)$ imply that
\begin{equation}\label{eq: f_h_compare}
f^{t,T}(x) = h(T,x) - \hespalt{x}{}{h(T-t,X_t)}.
\end{equation}
Let $\hat{p}(t, x, y)$ denote the transition density of $X$ under
$\hat{\prob}^x$. Recall from \cite[pp. 179]{Pinsky} that
\begin{equation}\label{eq: hat_m_invariance}
\hat{m}(y) = \int_{E} \hat{p}(t,
x, y) \hat{m}(x) dx, \quad  \text{for any } t>0 \text{ and } y\in E.
\end{equation}
Thus
\begin{equation}\label{eq: t1}
\begin{split}
\int_E f^{t,T}(x)\hat{m}(x)dx &= \int_Eh(T,x)\hat{m}(x)dx - \iint_E
\hat{p}(t,x,y)h(T-t,y)\hat{m}(x)dydx\\
&=\int_E h(T,x)\hat{m}(x)dx - \int_Eh(T-t,y)\hat{m}(y)dy.
\end{split}
\end{equation}
Set $l(T) := \int_E h(T,x)\hat{m}(x)dx$.
It then follows from \eqref{eq: t1} and $f^{t,T}(x)\geq 0$ that $l(T)\geq l(T-t)$ for all $0\leq t\leq T$.  Therefore $l(T)$ is
increasing in $T$ and hence $\lim_{T\rightarrow\infty}l(T)$ exists.
Furthermore, by \eqref{eq: h ub} we know that
$
\sup_{T\geq 0}l(T)\leq \int_E \cJ(x)\hat{m}(x)dx < \infty,
$
hence $\lim_{T\rightarrow\infty}l(T) = l < \infty$.  Sending $T\rightarrow \infty$ on both sides of \eqref{eq: t1}, we have
\begin{equation*}
\lim_{T\rightarrow\infty}\int_E f^{t,T}(x)\hat{m}(x)dx =
\lim_{T\rightarrow\infty}\left(l(T)-l(T-t)\right) = l-l = 0.
\end{equation*}
\end{proof}

In order to remove the integral with respect to the invariant density in \eqref{eq: int quad var conv}, we need the following result.

\begin{lem}\label{lem: bdd equicont}
For any fixed $t> 0$ and $n\in\Natural$, the family of functions on $E$ given by $\cbra{f^{t,T}(\cdot); T\geq t}$ is uniformly bounded and equicontinuous on $E_n$.
\end{lem}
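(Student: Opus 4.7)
The plan is to exploit the identity \eqref{eq: f_h_compare},
\[
f^{t,T}(x) = h(T,x) - \hat{\expec}^x\bra{h(T-t, X_t)},
\]
already obtained via It\^o's formula and Corollary \ref{cor: ui}, and to treat each of the two terms separately.

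For uniform boundedness on $E_n$, non-negativity of $f^{t,T}$ is immediate since $\overline{A}\geq 0$. For the upper bound, Lemma \ref{lem: main ub} gives $h(T,x)\leq \cJ(x)$, and \eqref{eq: h lb} together with the strong Markov property yields
\[
\hat{\expec}^x\bra{h(T-t, X_t)}\geq -C - 2\,\hat{\expec}^x\bra{\hat{\expec}^{X_t}\bra{\phi_0(X_{T-t})}} = -C - 2\,\hat{\expec}^x\bra{\phi_0(X_T)},
\]
so that $0\leq f^{t,T}(x)\leq \cJ(x) + C + 2\,\hat{\expec}^x\bra{\phi_0(X_T)}$. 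Since $\phi_0\in \bL^1(E,\hat m)$ by Lemma \ref{lem: phi_0 int} and $\cJ\in C(E)$, Remark \ref{rem: uni-local bdd}(ii) with $\delta=t$ gives $\sup_{T\geq t,\, x\in E_n}\hat{\expec}^x\bra{\phi_0(X_T)}<\infty$, and the uniform bound on $E_n$ follows.

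For equicontinuity, the same argument with $E_n$ replaced by $E_{n+2}$ shows that $\{h(T,\cdot): T\geq t/2\}$ is uniformly bounded in $L^\infty(E_{n+2})$. Since $h$ solves the semi-linear equation \eqref{eq: pde h}, whose coefficients are smooth and uniformly elliptic on the bounded domain $E_{n+2}$ (by Assumption \ref{ass: coeff}(i) and the regularity stipulated in Section \ref{subsec: setup}), classical interior regularity for quasilinear parabolic equations with quadratic gradient growth yields a uniform $C^{1,\alpha}$ bound on $h$ over the cylinder $[T-t/4,T]\times E_{n+1}$, independent of $T\geq t$. In particular, $\{h(T,\cdot):T\geq t\}$ is equicontinuous on $E_n$. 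For the second term $g(T,x):=\hat{\expec}^x\bra{h(T-t,X_t)}$, interpret it as $w(t,\cdot)$, where $w$ solves the linear parabolic Cauchy problem $\partial_s w=\cL^{\hat v}w$ with initial datum $h(T-t,\cdot)$; combining the $L^\infty$ bound on $h$ just obtained on $E_{n+2}$ with the probabilistic bounds on $h$ to control contributions from the exterior of $E_{n+2}$ gives a uniform bound on $w$ on $[0,t]\times E_{n+1}$, and interior Schauder estimates for the uniformly parabolic operator $\cL^{\hat v}$ on $E_{n+1}$ then produce equicontinuity of $g(T,\cdot)$ on $E_n$.

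The main technical obstacle is the uniform-in-$T$ interior regularity in the previous paragraph. It is enabled entirely by the uniform-in-$T$ $L^\infty$ bound from the boundedness step, together with local ellipticity and smoothness of the coefficients: on any bounded region $E_{n+2}$ the equation is uniformly parabolic with smooth bounded coefficients, placing the semi-linear estimate within the scope of classical quasilinear parabolic regularity theory.
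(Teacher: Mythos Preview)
Your proposal is correct and follows essentially the same route as the paper: the decomposition $f^{t,T}=h(T,\cdot)-k^{t,T}(t,\cdot)$ with $k^{t,T}(s,x)=\hat{\expec}^x[h(T-t,X_s)]$, uniform bounds via \eqref{eq: h lb}, \eqref{eq: h ub} and Remark \ref{rem: uni-local bdd}(ii), and then interior regularity (quasilinear for $h$ via \cite[Theorem V.3.1]{Lad}, linear Schauder for $k^{t,T}$). One minor simplification: rather than splitting the expectation defining $w=k^{t,T}$ into contributions from $E_{n+2}$ and its exterior, the paper bounds $k^{t,T}(s,x)$ directly on $[t/2,t]\times E_m$ by applying the global pointwise bounds $-C-2\hat{\expec}^y[\phi_0(X_{T-t})]\leq h(T-t,y)\leq \cJ(y)$ under the expectation and invoking the Markov property, which avoids any interior/exterior decomposition and also sidesteps the need to bound $h(T-t,\cdot)$ locally for $T-t<t/2$.
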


\begin{proof}
 Define
$k^{t,T}(s,x) :=\hespalt{x}{}{h(T-t, X_s)}$, for $s\leq t\leq T$ and $x\in E$,
so that \eqref{eq: f_h_compare} becomes $f^{t,T}(x) = h(T,x) - k^{t,T}(t,x)$. We will prove, for any $E_n \subset E_m$ and $t>0$,
\begin{enumerate}[a)]
\item $\{k^{t,T}(s,\cdot); T\geq t, t\geq s\geq t/2\}$ is uniformly bounded on $E_m$.
\item $\{h(T, \cdot); T\geq t/2\}$ is
uniformly bounded on $E_m$.
\item both $\{k^{t,T}(t, \cdot); T\geq t\}$ and $\{h(T, \cdot); T\geq t\}$ are equicontinuous in $E_n$.
\end{enumerate}

Let us first handle $k^{t,T}$. We have from \eqref{eq: h lb} and \eqref{eq: h ub} that
\begin{equation*}
-C -2\hespalt{x}{}{\phi_0(X_{T-t+s})} = -C -2 \hat{\expec}^{x} \bra{\hat{\expec}^{X_s}[\phi_0(X_{T-t})]} \leq k^{t,T}(s,x) \leq \hespalt{x}{}{\cJ(X_s)},
\end{equation*}
for $T-t\geq 0$ and $t\geq s\geq t/2$.
Since $\phi_0, \cJ\in \bL^1(E,\hat{m})$ from Lemmas \ref{lem: phi_0 int} and \ref{lem: main ub}, it then follows from Remark \ref{rem: uni-local bdd} $ii)$ that both $\sup_{s\geq t/2} \hat{\expec}^x[\cJ(X_s)]$ and $\sup_{T\geq t, s\geq t/2}\hat{\expec}^x[\phi_0(X_{T-t+s})]$ are bounded in $E_m$. Therefore, assertion a) is verified. Similarly,
\eqref{eq: h lb} and  \eqref{eq: h ub} imply that
\begin{equation*}
-C - 2\hespalt{x}{}{\phi_0(X_T)} \leq h(T,x) \leq \cJ(x), \quad \text{for} \quad T\geq t/2.
\end{equation*}
Then again, assertion b) follows from $\phi_0\in \bL^1(E,\hat{m})$ and Remark \ref{rem: uni-local bdd} $ii)$.

To prove $\{k^{t,T}(t, \cdot); T\geq t\}$ is equicontinuous in $E_n$,
 one can show that $k^{t,T}\in C^{1,2}((0,t)\times E_n) \cap C([0,t]\times \overline{E_n})$ and satisfies
 \begin{align*}
  \partial_s k = \cL^{\hat{v}}k \quad \text{ in } (0, t) \times E_n.
 \end{align*}
 This result essentially follows from \cite{Heath-Schweizer}, and its proof is carried out in \cite[Lemma A.3]{Guasoni-Robertson-fundsep}.
 It then follows from the interior Schauder estimates (cf. \cite[Theorem
 2.15]{friedman-parabolic}) that, for any $E_n\subset E_m$ with $n< m$,
 $\max_{\overline{E_n}} |\nabla k^{t,T}(t, \cdot)|$ is bounded from above by a
 constant which only depend on the dimension of the problem,
 $\max_{[t/2, t]\times \overline{E_m}}|k^{t,T}|$, maximum and minimum of eigenvalues
 of $A$ in $\overline{E_m}$, the distance from the boundary of
 $E_n$ to the boundary of $E_m$, and finally $t$. In particular, the uniform bounds in a) implies that this
 upper bound on $\max_{\overline{E_n}} |\nabla k^{t,T}(t, \cdot)|$  is independent of $T$. Therefore $\{k^{t,T}(t, \cdot); T\geq t\}$ is equicontinuous in $E_n$.

Now, $h$ satisfies \eqref{eq: pde h} for all $T > 0$ and $x\in E_m$. Moreover, we have seen from b) that  $\{h(T,
 \cdot); T\geq t/2\}$ is uniformly bounded in $E_m$. It then follows from
 \cite[Theorem V.3.1]{Lad} that, for any $E_n\subset E_m$ with $n<m$ and $T\geq t$,
 $\max_{\overline{E_n}}|\nabla h(T,\cdot)|$ is bounded by a constant which
 only depends on the dimension of the problem, uniform bounds for $h$ in b), the minimum
 and maximum eigenvalue of $A$ in $\overline{E_m}$, distance from boundary of $E_n$ to boundary of $E_m$, and finally $t$. Therefore, $\{h(T, \cdot); T\geq t\}$ is equicontinuous in $E_n$ as
 well.
\end{proof}

\begin{rem}\label{rem: h equicont}
 For later development, we record from the previous proof that $\{h(T, \cdot); T\geq t\}$ is uniformly bounded and equicontinuous on $E_n$ for any $n$.
\end{rem}

With these preparations we are able to prove Theorems \ref{thm: conv} and
\ref{thm: pointwise conv}.

\begin{proof}[Proof of Theorem \ref{thm: conv}] Suppose that the convergence
  in $i)$ does not hold, then there exist $\epsilon>0$, $E_n$, and a sequence
  $(T_i)_{i}$ such that $\sup_{E_n}f^{t,T_i}(x)\geq \epsilon$ for all $i$. Owing to
  Lemma \ref{lem: bdd equicont}, the Arzela-Ascoli theorem implies, taking a
  subsequence if necessary, $f^{t,T_i}$ converge to some continuous function
  $\hat{f}$ uniformly in $E_n$. Note that $\sup_{E_n}|f^{t, T_i}- \hat{f}| + \sup_{E_n} \hat{f} \geq \sup_{E_n} f^{t, T_i}$. Sending $T_i \rightarrow \infty$, the uniform convergence and the choice of $f^{t, T_i}$ implies $\sup_{E_n} \hat{f}(x) \geq \epsilon$. Since $\hat{f}$ is continuous, there exists a subdomain of $D\subset E_n$ such that $\hat{f}\geq \epsilon/2$ on $D$. However, this contradicts with Proposition \ref{prop: quad var
    conv} when the bounded convergence theorem is applied to the family of functions
  $(f^{t, T_i}\indic_{D})_{i\in \Natural}$.

 To prove the statement $ii)$, utilizing \eqref{eq: pde h} and applying Ito's formula to $h(T-\cdot, X_\cdot)$, we obtain
 \[
  \sup_{0\leq u\leq t} |h(T, x) - h(T-u, X_u)| \leq \frac12 \int_0^t (\nabla h)' \overline{A} \nabla h(T-s, X_s)\, ds + \sup_{0\leq u\leq t} \left|\int_0^u (\nabla h)' \,a\, d\hat{W}_s\right|.
 \]
 Taking the $\hat{\prob}^x$-expectation on both sides and using Burkholder-Davis-Gundy inequality, we obtain
  \begin{align*}
   & \sup_{E_n}\wh{\expec}^x \bra{\sup_{0\leq u\leq t} |h(T, x) - h(T-u, X_u)|} \\
   & \leq \frac12\,\sup_{E_n}\wh{\expec}^x \bra{\int_0^t(\nabla h)' \overline{A} \nabla h(T-s, X_s)\, ds} + c\pare{\sup_{E_n}\wh{\expec}^x \bra{\int_0^t(\nabla h)' A \nabla h(T-s, X_s)\, ds}}^{\frac12}\\
   &\rightarrow 0, \quad \text{ as } T\rightarrow \infty, \text{ for any } E_n,
 \end{align*}
 where the last convergence follows from $i)$ and $A\leq \overline{A}/\underline{\kappa}$.
\end{proof}

\begin{proof}[Proof of Theorem \ref{thm: pointwise conv}]

The proof of Theorem \ref{thm: pointwise conv} follows a similar argument as
those presented in the proofs of \cite[Proposition 4.3 and Theorems 1.3,
1.4, 4.1]{Ichihara-Sheu}, and hence only connections to those proofs are given.
Regarding \cite[Proposition 4.3]{Ichihara-Sheu},  for constants $S,T>0$, using
\eqref{eq: h lb exp ub} at $t = S + T$ and $\tau = S$ it follows that
\begin{equation*}
\hespalt{x}{}{e^{\uk h(T,X_S)}} \leq e^{\uk h(T+S,x)}.
\end{equation*}
Furthermore, \eqref{eq: v0 bound} implies, for any fixed $T >0$ that
$h(T,x)\leq \phi_0(x) -\hat{v}(x) + C_T$ for some $C_T > 0$. This, combined
with Proposition \ref{prop: e-existence} $iii)$ (with $\phi = \phi_0$) imply
that $e^{\uk h(T,\cdot)}\in \mathbb{L}^1(E,\hat{m})$ for any fixed $T>0$ and hence
Remark \ref{rem: uni-local bdd} $iii)$ implies
\begin{equation*}
\lim_{S\uparrow\infty}\hespalt{x}{}{e^{\uk h(T,X_S)}} = \int_E e^{\uk
  h(T,y)}\hat{m}(y)dy.
\end{equation*}
Thus, the conclusions of \cite[Proposition 4.3]{Ichihara-Sheu} follow by
repeating their proof, noting that the role of $-k_1w(t,x)$ therein is now
played by $\uk h(t,x)$ here.  Now, $i)$ in Theorem \ref{thm: pointwise conv}
follows by repeating the argument of \cite[Theorem 4.1]{Ichihara-Sheu} and using Remark \ref{rem: h equicont}.

As for part $ii)$ in Theorem \ref{thm: pointwise conv}, we essentially repeat
the steps within the proof of \cite[Theorem 1.4]{Ichihara-Sheu}. Namely, using
interior estimates for quasi-linear parabolic equations in \cite[Theorem V.3.1]{Lad} and Remark
\ref{rem: h equicont} it follows that there are constants $C_n>0$ and $\gamma \in (0,1)$ such that
for $x,y\in E_n$ and $s,\tilde{s} > t $: $|\nabla h(s,y) - \nabla
h(\tilde{s},x)| \leq C_n|s-\tilde{s}|^\gamma$.  Now, define
 \begin{equation}\label{eq: f conv}
  f(n,T):= \int_{E_n} (\nabla h)' \barA \nabla h(T, y) \hat{m}(y)\, dy.
 \end{equation}
It thus follows that $f(n,T)$ is uniformly continuous in $(t,\infty)$.  Next we claim that $\lim_{T\uparrow\infty} f(n,T) = 0$ for any $n$. Indeed, recall from Proposition \ref{prop: quad var conv} that
 \[
 0 =\lim_{T\rightarrow \infty} \int_{E} \hat{\expec}^x\bra{\int_0^t (\nabla h)'
   \overline{A} \nabla h(T-s, X_s) \, ds} \hat{m}(x) \, dx=0,\quad \textrm{for
   any } t>0.
 \]
 Applying Fubini's theorem and \eqref{eq: hat_m_invariance} to the previous convergence yields
\begin{equation}\label{eq: f int conv}
\lim_{T\rightarrow\infty}\int_0^t f(n,T-s)ds =0.
\end{equation}
Therefore, as shown in the proof of \cite[Theorem 1.4]{Ichihara-Sheu}, that
$f(n,T)\rightarrow 0$ follows by the uniform continuity of $f(n,\cdot)$. The
remaining steps of the proof are identical to those in \cite[Theorem 1.4]{Ichihara-Sheu}.
\end{proof}

\section{Proofs from Section \ref{sec: conv Rd sd}}\label{sec: sd proof}

\subsection{Proof of Theorem \ref{thm: conv Rd}}\label{subsec: Rd proof}

Theorem \ref{thm: pointwise conv} has been proved in \cite{Ichihara-Sheu} when $E=\Real^d$ and $A=I_d$. When $A$ and $\overline{A}$ are local elliptic satisfying Assumption \ref{ass: coeff}, the same calculation as in \cite[Proposition 5.1]{Ichihara-Sheu} shows, when Assumption \ref{ass: Rd growth} holds, there exist $\epsilon_0, C>0$ and $0<\underline{c} <\overline{c}$ such that $\phi_0 = (c/2) |x|^2$ for any $c\in (\underline{c}, \overline{c})$ satisfies
\[
 \fF[\phi_0](x) = \frac{1}{2}c\tr(A)+\frac{1}{2}c^2x'\bar{A}x + cx'B + V \leq C + \left(\frac{\ok}{2}\alpha_1 c^2 - \beta_1 c - \gamma_1\right)|x|^2 \leq C - \epsilon_0 |x|^2, \quad x\in\Real^d.
\]
Indeed, for $\gamma_1 > 0$ one can take $0\leq \underline{c} < \overline{c}$ for $\overline{c}$ sufficiently small, while for $\gamma_1< 0, \beta_1 > 0$ one can use part $iv-b)$ of Assumption \ref{ass: Rd growth} to find $0 < \underline{c} < \overline{c}$.  Therefore, Assumption \ref{ass: long_run} is satisfied and Assumption \ref{ass: long_run strong} holds when $\delta>1$ satisfies $c \delta <\overline{c}$. On the other hand, Assumption \ref{ass: Rd growth} i) implies that $A$ is bounded and $B + \overline{A} \nabla \phi_0$ has at most linear growth. Thus the coordinate process does not explode $\prob^{\phi_0, x}$-a.s. for any $x\in \Real^d$, implying that Assumption \ref{ass: wellpose Lphi0} holds. As for Assumption \ref{ass: psi}, take $\psi_0(x) =-(\tilde{c}/2) |x|^2$ for $\tilde{c}>0$, the second convergence in \eqref{eq: psi_0 hat v} and the first inequality in \eqref{eq: main ub main ub} clearly hold. For the second inequality in \eqref{eq: main ub main ub},
\[
 \fF[\delta \phi_0] + \alpha (\delta \phi_0 - \psi_0) \leq C - (\epsilon_0 - (\alpha/2) (\delta c + \tilde{c}))|x|^2,
\]
which is bounded from above by $C$ when $\alpha$ is sufficiently small. Finally, it remains to find $\tilde{c}$ such that the first convergence in \eqref{eq: psi_0 hat v} is verified. To this end,
\begin{align*}
  \fF[\psi_0](x) &= -\frac12 \tilde{c} \tr(A) + \frac12 \tilde{c}^2 x' \barA x - \tilde{c} x' B + V\geq C + \frac{\uk}{2}\, \tilde{c}^2 x' A x+ (\tilde{c} \beta_1 - \gamma_2) |x|^2,
 \end{align*}
 where the inequality is a result of $\barA \geq \uk A$ and Assumption \ref{ass: Rd growth} i)-iii). When $\beta_1 >0$, choose $\tilde{c}$ sufficiently large such that $\tilde{c}\beta_1 > \gamma_2$. When $\gamma_1>0$ and $\beta_1\leq 0$, Assumption \ref{ass: Rd growth} iv-a) yields $(\uk/2) \tilde{c}^2 x' A x \geq (\uk/2) \tilde{c}^2 (\alpha_2 |x|^2 - C_3)$. Thus choose $\tilde{c}$ sufficiently large such that $(\uk/2) \tilde{c}^2 \alpha_2 + \tilde{c}\beta_1 - \gamma_2 >0$. In conclusion, all assumptions of Theorem \ref{thm: conv} are satisfied, hence statements of Theorems \ref{thm: conv} and \ref{thm: pointwise conv} follow.

\subsection{Proof of Theorem \ref{thm: conv sd}}\label{subsec: sd proof}

\subsubsection{Preliminaries}\label{subsubsec: prelim} The assumptions of Theorems
\ref{thm: conv} and \ref{thm: pointwise conv} are now verified via Assumptions
\ref{ass: coeff sd} -- \ref{ass: H_eps}, which are enforced throughout. To
ease notation, the argument $x$ is suppressed when writing any function
$f(x)$; for example,  $\tr(f(x)x g(x)x)$ will be written as $\tr(fxgx)$.  The
following basic identities and inequalities are used repeatedly.  The first
one concerns derivatives of the functions $\log(\det(x))$ and $\norm{x}$
respectively, and holds for $i,j,k,l =1 ,...,d$:
\begin{equation}\label{eq: log_norm_derivs}
\begin{split}
D_{(ij)}\log(\det(x)) &= x^{-1}_{ij},\qquad D^2_{(ij),(kl)}\log(\det(x)) = -(x^{-1})_{il}(x^{-1})_{jk},\\
D_{(ij)}\norm{x} &= \frac{x_{ij}}{\norm{x}},\qquad D^2_{(ij),(kl)} \norm{x} = \frac{\delta_{(ij),(kl)}}{\norm{x}} - \frac{x_{ij}x_{kl}}{\norm{x}^3},
\end{split}
\end{equation}
where $\delta_{(ij),(kl)} = 1$ if $i=k,j=l$ and $0$ otherwise. Next, we give
an identity, which follows from the discussion below \eqref{eq: f_g_def}:
\begin{equation}\label{eq: quad_form_matrix}
\begin{split}
\sum_{i,j,k,l=1}^d \theta_{ij}\tr\left(a^{ij}(a^{kl})'\right) \psi_{kl} &=
4\tr(f\psi g\theta);\qquad \theta,\psi\in\sd,
\end{split}
\end{equation}
Now, \eqref{eq: log_norm_derivs}, along with the definitions of $\cL$ and $H_\delta$
from \eqref{eq: L_phi_matrix def} and \eqref{eq: Hdelta def} respectively, give
\begin{equation}\label{eq: log_norm_ops}
\cL(\log(\det(x))) = H_0;\qquad \cL(\norm{x}) = \frac{1}{\norm{x}}\left(\tr(f'g) + \tr(f)\tr(g) - \frac{2}{\norm{x}^2}\tr(fxgx) + \tr(B'x)\right).
\end{equation}
On the other hand, for $\theta,\psi,\eta\in \sd$:
\begin{equation}\label{eq: pos_def_trace}
\tr(\theta\psi)\leq \tr(\theta)\tr(\psi);\qquad \tr(\theta\psi\eta\psi)\leq \tr(\theta)\tr(\eta)\norm{\psi}^2.
\end{equation}
Note that the first inequality in \eqref{eq: pos_def_trace} also holds for  $\theta\in\sd$ and $\psi\in\mathbb{M}^d$ with $\psi+\psi' \in\sd$. This is because $\tr(\theta\psi)= (1/2) \tr(\theta(\psi+\psi')) \leq (1/2) \tr(\theta)\tr(\psi+\psi')=\tr(\theta)\tr(\psi)$. Lastly for any constants $a,b>0$,
\begin{equation}\label{eq: lim_norm_det}
\lim_{\theta \rightarrow \partial \sd} -a\log(\det(\theta)) + b\norm{\theta} = \infty.
\end{equation}
This convergence is clear when $\det(\theta)\downarrow 0$. When $\norm{\theta}\uparrow \infty$,
since $\det(\theta) = \prod_{i=1}^d \lambda_i$ and $\norm{\theta}=\sqrt{\sum_{i=1}^d \lambda_i^2}$, where $(\lambda_i)_{i=1\dots d}>0$ are the eigenvalues of $\theta$, counting multiplicity, then
\eqref{eq: lim_norm_det} follows from Jensen and H\"{o}lder's inequalities.

\subsubsection{Proofs} Let us first show  $\fF$ in \eqref{eq: F sd} is locally elliptic.

\begin{lem}\label{lem: elliptic sd}
 Let Assumption \ref{ass: coeff sd} $i)$ hold. Then for each $E_n \subset \sd$, there exists $c_n>0$ such that
  \[\sum_{i,j,k,l=1}^d \theta_{ij} \tr(a^{ij}(a^{kl})')(x) \theta_{kl} =
  4\tr(f\theta g\theta)(x)\geq c_n \|\theta\|^2, \quad  \mbox{for any } x\in
  E_n, \theta\in \sdall. \]
\end{lem}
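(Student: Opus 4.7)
The lemma splits naturally into an algebraic identity and an ellipticity bound. For the identity, my plan is to appeal to \eqref{eq: quad_form_matrix}, which already records
$\sum_{i,j,k,l=1}^{d}\theta_{ij}\tr(a^{ij}(a^{kl})')\psi_{kl}=4\tr(f\psi g\theta)$
for $\theta,\psi\in\sd$. Both sides are bilinear functions of the entries of $(\theta,\psi)$, so since $\sd$ is open in $\sdall$ the identity extends by a polynomial-identity argument to all $(\theta,\psi)\in\sdall\times\sdall$ (indeed to $\mathbb{M}^d\times\mathbb{M}^d$), and specializing to $\psi=\theta$ delivers the claimed equality. As an alternative, one could derive it directly from the pointwise expansion $\tr(a^{ij}(a^{kl})')=f_{ik}g_{jl}+f_{il}g_{jk}+f_{jk}g_{il}+f_{jl}g_{ik}$ recorded just below \eqref{eq: f_g_def}, and use $\theta_{ij}=\theta_{ji}$ to verify that each of the four summands contributes $\tr(f\theta g\theta)$ after a relabelling of dummy indices.

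For the ellipticity estimate, Assumption \ref{ass: coeff sd} $i)$ will supply a constant $c_n'>0$ such that $f(x)\geq c_n' I_d$ and $g(x)\geq c_n' I_d$ in the sense of symmetric matrices for every $x\in E_n$. The strategy is to exploit the fact that, since $\theta\in\sdall$, both $\theta g\theta$ and $\theta^2$ are positive semi-definite. First, combining $f\geq c_n' I_d$ with the elementary monotonicity $\tr(AC)\geq c_n'\tr(C)$, valid whenever $A\geq c_n' I_d$ and $C\geq 0$, yields $\tr(f\theta g\theta)\geq c_n'\tr(\theta g\theta)=c_n'\tr(g\theta^2)$, the last equality being cyclicity of the trace. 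Applying the same monotonicity a second time, now with $g\geq c_n' I_d$ and $\theta^2\geq 0$, gives $\tr(g\theta^2)\geq c_n'\tr(\theta^2)=c_n'\norm{\theta}^2$. Chaining the two bounds produces $4\tr(f\theta g\theta)\geq 4(c_n')^2\norm{\theta}^2$, so the lemma will hold with $c_n\dfn 4(c_n')^2$.

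I do not expect any genuine obstacle here. The only point worth flagging is that the positive semi-definiteness of $\theta g\theta$ and $\theta^2$ genuinely requires $\theta=\theta'$, which is precisely why the bound is stated for $\theta\in\sdall$ rather than for arbitrary $\theta\in\mathbb{M}^d$; all remaining steps are routine manipulations with the trace.
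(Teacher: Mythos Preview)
Your proof is correct. The identity is handled exactly as the paper does (via \eqref{eq: quad_form_matrix}, with the harmless extension from $\sd$ to $\sdall$ made explicit), so no difference there. For the ellipticity bound, however, the paper takes a different route: it rewrites $\tr(f\theta g\theta)=(\operatorname{vec}\theta)'(f\otimes g)(\operatorname{vec}\theta)$ via a standard vectorization identity (cf.\ \cite[Chapter~4, Problem~25]{Horn-Johnson}) and then invokes the fact that the Kronecker product of two positive definite matrices is positive definite, so that $f\otimes g\geq c_n I_{d^2}$ on $E_n$. Your argument instead exploits the symmetry of $\theta$ directly, observing that $\theta g\theta$ and $\theta^2$ are positive semi-definite and applying the elementary trace monotonicity $\tr(AC)\geq c\,\tr(C)$ for $A\geq cI_d$, $C\geq 0$, twice. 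Your route is more self-contained, avoiding the Kronecker-product machinery and external references; the paper's route, on the other hand, yields the bound for every $\theta\in\mathbb{M}^d$ (not just $\theta\in\sdall$), since neither the vectorization identity nor the positivity of $f\otimes g$ uses symmetry of $\theta$. For the lemma as stated, either approach is perfectly adequate.
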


\begin{proof}
Applying \eqref{eq: quad_form_matrix} for $\theta = \psi\in \sd$ gives $\sum_{i,j,k,l=1}^d \theta_{ij} \tr\pare{a^{ij}(a^{kl})'} \theta_{kl} = 4\, \tr(f \theta g \theta)$. Now, $\tr(f \theta g \theta) = (\mbox{vec} \theta)' (f\otimes g) (\mbox{vec}
\theta)$, cf. \cite[Chapter 4, Problem 25]{Horn-Johnson}, where
$\mbox{vec}(\theta) \in\Real^{d^2}$ is obtained by stacking columns of
$\theta$ on top of one another. From \cite[Corollary 4.2.13]{Horn-Johnson} it
follows that $f\otimes g$ is positive definite if both $f$ and $g$ are positive definite. Hence Assumption \ref{ass: coeff sd} $i)$ ensures the existence of $c_n>0$ such that $(\mbox{vec} \theta)' (f\otimes g) (\mbox{vec} \theta)\geq c_n |\mbox{vec} \theta|^2 = c_n \|\theta\|^2$ on $E_n$, proving the result.
\end{proof}

Let us now study the Lyapunov function $\phi_0$.
Recall $\phi_0$ and the cutoff function $\eta$ from \eqref{eq: phi0 matrix def} and, for given $\overline{c},
\underline{c} > 0$ set $\phi_0^{(1)}(x)\dfn -\underline{c}\log(\det(x))$ and
$\phi_0^{(2)}(x) = \overline{c}\norm{x}\eta(\norm{x})$ so that $\phi_0 =
\phi^{(1)}_0 + \phi^{(2)}_0 + C$. We first derive an upper bound for $\fF[\phi_0]$.

\begin{lem}\label{lem: Fphi_0 ub}
There exists a constant $C$, depending on $\oc$ but not on $\uc$, such that
\begin{equation}\label{eq: fF big ub 2}
\fF[\phi_0](x) \leq -\underline{c}H_{4\ok\underline{c}}(x)
  -\left(\gamma_1 +
    \beta_1\overline{c}-4\ok\alpha_1\overline{c}^2\right) \norm{x} \,\indic_{\{\norm{x} >
  n_0 + 2\}} + C, \quad \text{ for } x\in \sd.
\end{equation}
\end{lem}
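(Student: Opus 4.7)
The plan is to split $\phi_0$ into its two nontrivial pieces
$\phi_0^{(1)}(x) \dfn -\uc \log(\det(x))$ and
$\phi_0^{(2)}(x) \dfn \oc \norm{x}\eta(\norm{x})$, evaluate $\fF[\phi_0]$ term by term, and track separately how each piece combines into the two summands on the right-hand side of \eqref{eq: fF big ub 2}. Because $\fF[v] = \cL v + \frac12 (\nabla v)'\overline{A}(\nabla v) + V$, the additive constant in $\phi_0$ disappears and one has
\[
\fF[\phi_0] = \cL\phi_0^{(1)} + \cL\phi_0^{(2)} + V + \tfrac12 (\nabla\phi_0)'\overline{A}(\nabla\phi_0).
\]
The plan is then to dominate the quadratic form by its ``squared-diagonal'' counterpart via
$(\nabla\phi_0)'\overline{A}(\nabla\phi_0) \leq 2(\nabla\phi_0^{(1)})'\overline{A}(\nabla\phi_0^{(1)}) + 2(\nabla\phi_0^{(2)})'\overline{A}(\nabla\phi_0^{(2)})$, so that the $\phi_0^{(1)}$ and $\phi_0^{(2)}$ contributions decouple. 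This is the conceptual step on which the factor $4\ok\uc$ appearing in $H_{4\ok\uc}$ depends.

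For the $\phi_0^{(1)}$ contribution, the identity $\cL\phi_0^{(1)} = -\uc H_0$ comes immediately from \eqref{eq: log_norm_ops}. Using $\nabla \phi_0^{(1)} = -\uc x^{-1}$ together with Assumption \ref{ass: coeff sd} $ii)$ and \eqref{eq: quad_form_matrix} gives
\[
(\nabla\phi_0^{(1)})'\overline{A}(\nabla\phi_0^{(1)}) \leq 4\ok\,\tr(f(\uc x^{-1}) g(\uc x^{-1})) = 4\ok\uc^2 \,\tr(fx^{-1}gx^{-1}).
\]
Combining, the $\phi_0^{(1)}$ part of the bound telescopes exactly into $-\uc H_{4\ok\uc}$ via the definition \eqref{eq: Hdelta def}, introducing no constant depending on $\uc$.

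For the $\phi_0^{(2)}$ contribution, on $\{\norm{x} > n_0+2\}$ one has $\eta \equiv 1$ so the derivative formulas in \eqref{eq: log_norm_derivs} and \eqref{eq: log_norm_ops} apply directly. Using $\tr(f'g)\leq \tr(f)\tr(g)$ and Assumption \ref{ass: growth sd} $i)$--$ii)$ together with $\tr(fxgx)\geq 0$ yields $\oc\,\cL\norm{x} \leq -\oc\beta_1\norm{x} + C$. For the quadratic term one applies Assumption \ref{ass: coeff sd} $ii)$ and \eqref{eq: quad_form_matrix} with $\theta = \oc x/\norm{x}$, then uses the second inequality in \eqref{eq: pos_def_trace} and Assumption \ref{ass: growth sd} $i)$:
\[
(\nabla\phi_0^{(2)})'\overline{A}(\nabla\phi_0^{(2)}) \leq 4\ok\,\tr\!\pare{f\tfrac{\oc x}{\norm{x}} g \tfrac{\oc x}{\norm{x}}} \leq \frac{4\ok\oc^2}{\norm{x}^2}\tr(f)\tr(g)\norm{x}^2 \leq 4\ok\alpha_1\oc^2 \norm{x}.
\]
Combining with the potential bound $V \leq -\gamma_1\norm{x} + C_2$ from Assumption \ref{ass: growth sd} $iii)$ yields the coefficient $-(\gamma_1+\beta_1\oc - 4\ok\alpha_1\oc^2)\norm{x}$ on $\{\norm{x}>n_0+2\}$. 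On the region $\{\norm{x}\leq n_0+2\}$, the function $\phi_0^{(2)}$, its derivatives (which involve $\eta,\eta',\eta''$), and $V$ are all controlled by constants depending only on $\oc, \alpha_1, \beta_1, \gamma_1, C_1, C_2, n_0$ and $\eta$, so their contribution is absorbed into the universal constant $C$.

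The only delicate point is ensuring that every error term introduced belongs to the constant $C$ and is independent of $\uc$: the $\phi_0^{(1)}$ calculation above keeps all $\uc$-dependence inside $H_{4\ok\uc}$, while the $\phi_0^{(2)}$ part, the potential $V$, and the transition region involve only $\oc$ and the model parameters. The main technical obstacle is the bookkeeping in the transition region $\norm{x}\in [n_0+1, n_0+2]$, where one must verify that the derivatives of $\eta$ produce bounded contributions uniformly in $\uc$; this follows from the compactness of $\{n_0+1\leq \norm{x}\leq n_0+2\}$ together with the local regularity of $B, f, g, V$.
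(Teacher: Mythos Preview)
Your argument is essentially identical to the paper's: the same decomposition $\phi_0=\phi_0^{(1)}+\phi_0^{(2)}+C$, the same decoupling of the quadratic form via $(a+b)'\overline{A}(a+b)\leq 2a'\overline{A}a+2b'\overline{A}b$ combined with $\overline{A}\leq\ok A$, and the same termwise estimates producing $-\uc H_{4\ok\uc}$ from $\phi_0^{(1)}$ and the $\norm{x}$-coefficient from $\phi_0^{(2)}$ and $V$.

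One small slip in your final paragraph: the set $\{x\in\sd: n_0+1\leq\norm{x}\leq n_0+2\}$ is \emph{not} compact in $\sd$, since $\det(x)$ can approach $0$ while $\norm{x}$ stays in this range; hence ``compactness plus local regularity of $B,f,g,V$'' does not by itself bound the coefficients there. The correct justification (implicit in the paper) is that Assumption~\ref{ass: growth sd} $i)$ applies on $\{\norm{x}\geq n_0\}$ and hence on the entire transition annulus, giving directly $\tr(f)\tr(g)\leq\alpha_1(n_0+2)$ and $\norm{B}\leq C$; together with $f,g\in\sd$ this bounds all entries of $a^{ij}(a^{kl})'$ and $B$, and since $\phi_0^{(2)}\equiv 0$ for $\norm{x}<n_0+1$ there is nothing to estimate there.
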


\begin{proof}
By the definition of $\mathfrak{F}$ and Assumption \ref{ass: coeff sd} $ii)$:
  \begin{equation}\label{eq: fF big ub}
\begin{split}
  \fF[\phi_0] &\leq  \cL \phi^{(1)}_0 + \cL \phi^{(2)}_0 + \frac{\ok}{2}
  \sum_{i,j,k,l=1}^d (D_{ij}\phi^{(1)}_0 +
  D_{(ij)}\phi^{(2)}_0)\tr\left(a^{ij}(a^{kl})'\right)(D_{(kl)}\phi^{(1)}_0 +
  D_{(kl)}\phi^{(2)}_0) + V,\\
&\leq  \cL \phi_0^{(1)} + \ok\sum_{i,j,k,l=1}^dD_{(ij)}\phi_0^{(1)} \tr\pare{a^{ij}(a^{kl})'}
  D_{(kl)} \phi_0^{(1)}\\
&\qquad +\cL\phi^{(2)}_0  + \ok
\sum_{i,j,k,l=1}^d D_{(ij)}\phi_0^{(2)} \tr\pare{a^{ij}(a^{kl})'}
  D_{(kl)} \phi_0^{(2)} + V.\\
 \end{split}
\end{equation}
In what follows, each term on the right-hand-side will be estimated. First,
\eqref{eq: quad_form_matrix}, \eqref{eq: log_norm_ops}, and the definition of $H_\delta$ in \eqref{eq: Hdelta def} yield:
\begin{equation}\label{eq: phi10 big ub}
\begin{split}
\cL \phi_0^{(1)}  + \ok \sum_{i,j,k,l=1}^d D_{(ij)}\phi_0^{(1)}
\tr\pare{a^{ij}(a^{kl})'} D_{(kl)} \phi_0^{(1)}&= -\underline{c}H_{4\ok\underline{c}}.
\end{split}
\end{equation}
As for $\phi^{(2)}_0$, when $\norm{x} >n_0+2$, $\phi^{(2)}_0(x) = \norm{x}$
and hence by \eqref{eq: quad_form_matrix} and  \eqref{eq: log_norm_ops}:
\begin{equation}\label{eq: Lphi2 ub x}
\begin{split}
\cL \phi_0^{(2)}& + \ok \sum_{i,j,k,l=1}^d D_{(ij)}\phi_0^{(2)}
\tr\pare{a^{ij}(a^{kl})'} D_{(kl)} \phi_0^{(2)}\\
&=
\frac{\overline{c}}{\norm{x}}\left(\tr(f'g) + \tr(f)\tr(g) +
  \left(\frac{4\overline{c}\ok}{\norm{x}}-\frac{2}{\norm{x}^2}\right)\tr(fxgx)
  + \tr(B'x)\right).
\end{split}
\end{equation}
Assumption \ref{ass: growth sd} is now used to refine the right-hand-side.  Since
$\tr(f'g) \leq \tr(f)\tr(g)$ for $f,g\in \sd$ (cf. \eqref{eq: pos_def_trace}), Assumption \ref{ass: growth sd} $i)$  yields
\begin{equation*}
\frac{\overline{c}}{\norm{x}}\left(\tr(f'g) + \tr(f)\tr(g)\right) \leq
2\alpha_1\overline{c}, \quad \norm{x} >n_0+2.
\end{equation*}
Lemma \ref{lem: elliptic sd} implies $\tr(fxgx) \geq 0$. This, and $\tr(fxgx) \leq \tr(f) \tr(g) \norm{x}^2$ (cf. \eqref{eq: pos_def_trace} again) gives, in light of Assumption \ref{ass: growth sd} $i)$, that
\begin{equation*}
\frac{\overline{c}}{\norm{x}}\left(\frac{4\overline{c}\ok}{\norm{x}}-\frac{2}{\norm{x}^2}\right)\tr(fxgx)
\leq 4\overline{c}^2\ok\alpha_1\norm{x}, \quad \norm{x} > n_0+2.
\end{equation*}
Lastly, Assumption \ref{ass: growth sd} $ii)$ gives
\begin{equation*}
\frac{\overline{c}}{\norm{x}}\tr(B'x) \leq -\beta_1\overline{c}\norm{x} +
\frac{C_1\overline{c}}{n_0+2}, \quad \norm{x}>n_0+2.
\end{equation*}
Putting previous three estimates back to \eqref{eq: Lphi2 ub x} yields
\begin{equation}\label{eq: phi20 big ub}
\begin{split}
\cL \phi_0^{(2)} &+ \ok \sum_{i,j,k,l=1}^d D_{(ij)}\phi_0^{(2)}
\tr\pare{a^{ij}(a^{kl})'} D_{(kl)} \phi_0^{(2)}
\leq  C -\left(\beta_1\overline{c}-4\ok\alpha_1\overline{c}^2\right)\norm{x},
\end{split}
\end{equation}
when $\norm{x}>n_0+2$. Here $C$ is a constant which depends linearly on $\overline{c}$.

On the other
hand, when $\norm{x}\leq n_0 + 2$, since $\phi^{(2)}_0$ and its
derivatives are bounded for bounded $\norm{x}$, one can show the left-hand-side of \eqref{eq: Lphi2 ub x} is bounded from above by a constant. Combining previous estimates on different parts of $\sd$ yields
\begin{equation}\label{eq: Lphi2 ub}
\cL \phi_0^{(2)} + \ok \sum_{i,j,k,l=1}^d D_{(ij)}\phi_0^{(2)}
\tr\pare{a^{ij}(a^{kl})'} D_{(kl)} \phi_0^{(2)}
\leq  C -\left(\beta_1\overline{c}-4\ok\alpha_1\overline{c}^2\right)\norm{x} \,\indic_{\{\norm{x} >n_0+2\}}.
\end{equation}

Now putting \eqref{eq: phi10 big ub} and \eqref{eq: Lphi2 ub} back into
\eqref{eq: fF big ub}, and utilizing the upper bound of $V$ in
Assumption \ref{ass: growth sd} $iii)$, we confirm \eqref{eq: fF big ub 2}.
\end{proof}

The upper bound in \eqref{eq: fF big ub 2} is then used to identify the Lyapunov function and verify its properties.

\begin{lem}\label{lem: quad bounds sd}
For the $\epsilon$ of Assumption \ref{ass: H_eps}, there exist $C>0$ and $0 <
\overline{c}_l < \overline{c}_h$ such that for any $0 < \underline{c} <
\epsilon/(4\ok)$ and $\overline{c}_l < \overline{c} < \overline{c}_h$ the function $\phi_0$ in \eqref{eq: phi0 matrix def}
is nonnegative on $\sd$ and
satisfies $\lim_{n\uparrow \infty} \sup_{x\in \sd \setminus
  E_n}\fF[\phi_0](x)=-\infty$. Therefore, Assumption \ref{ass: long_run} holds.
\end{lem}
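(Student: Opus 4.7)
The plan is to apply the upper bound from Lemma \ref{lem: Fphi_0 ub} and to show that its two pieces---the log-type term $-\uc H_{4\ok\uc}(x)$ and the linear-in-$\norm{x}$ term $-(\gamma_1 + \beta_1 \oc - 4\ok\alpha_1 \oc^2)\norm{x}\indic_{\{\norm{x} > n_0+2\}}$---drive $\fF[\phi_0]$ to $-\infty$ near the two distinct portions of $\partial\sd$, namely $\{\det(x)\downarrow 0\}$ and $\{\norm{x}\uparrow\infty\}$. First I would choose the interval $(\oc_l, \oc_h)$ so that the quadratic $Q(\oc) := \gamma_1 + \beta_1 \oc - 4\ok\alpha_1\oc^2$ is strictly positive on it; this is routine case-checking using Assumption \ref{ass: growth sd} iv). When $\gamma_1 \geq 0$ and $\beta_1 > 0$, $Q(0)\geq 0$ and $Q'(0)=\beta_1>0$, so $Q>0$ on some $(0,\oc_h)$; when $\beta_1\leq 0,\,\gamma_1>0$, $Q(0)>0$ and concavity gives the same conclusion. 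In the remaining subcase $\beta_1>0,\,\gamma_1<0$, the discriminant condition $\beta_1^2 + 16\ok\alpha_1\gamma_1 > 0$ is precisely what produces two positive roots of $Q$ between which it is positive.

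Next I would use the standing restriction $\uc < \epsilon/(4\ok)$. Since $\partial_\delta H_\delta(x) = -\tr(fx^{-1}gx^{-1})$ and Lemma \ref{lem: elliptic sd} applied at $\theta = x^{-1}\in\sd$ yields $\tr(fx^{-1}gx^{-1})\geq 0$, the map $\delta \mapsto H_\delta(x)$ is non-increasing on $\Real$. Thus $H_{4\ok\uc}(x) \geq H_\epsilon(x)$ for every $x\in\sd$, and both parts i) and ii) of Assumption \ref{ass: H_eps} become applicable to $-\uc H_{4\ok\uc}$. Part i) gives the global bound $-\uc H_{4\ok\uc}\leq -\uc \inf_\sd H_\epsilon$, and part ii) gives, for $\det(x)$ small, the singular estimate $-\uc H_{4\ok\uc}(x) \leq \uc c_0 \log(\det(x)) + C'$.

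Nonnegativity of $\phi_0$ then follows by enlarging $C$: using the AM--GM bound $\det(x)\leq (\norm{x}/\sqrt{d})^d$ on the eigenvalues of $x$, on $\{\norm{x}>n_0+2\}$ (where $\eta\equiv 1$) the linear term $\oc\norm{x}$ eventually dominates $\uc d\log(\norm{x}/\sqrt{d})$, while on $\{\norm{x}\leq n_0+2\}$ $\det(x)$ is uniformly bounded above, so $-\uc\log(\det(x))$ is bounded below. For the main claim, fix $M>0$; the two estimates from the previous paragraph let me choose $\delta > 0$ small enough and $N$ large enough that $\fF[\phi_0] < -M$ on $\{\det(x)<\delta\}\cup\{\norm{x}>N\}$, and then the complement $\{\norm{x}\leq N,\,\det(x)\geq \delta\}$ is a compact subset of $\sd$, hence contained in some $E_n$. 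Thus $\sup_{\sd\setminus E_n}\fF[\phi_0]\leq -M$, which is the required convergence. The main subtlety is ensuring the two asymptotic controls combine cleanly: this is precisely what the uniform bound in part i) of Assumption \ref{ass: H_eps} delivers, because it keeps $-\uc H_{4\ok\uc}$ below a constant everywhere, so the coercivity of the linear term can push $\fF[\phi_0]$ arbitrarily negative at $\norm{x}\to\infty$ regardless of how $\det(x)$ simultaneously behaves.
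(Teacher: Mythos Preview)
Your argument is correct and follows essentially the same route as the paper's own proof: both invoke Lemma \ref{lem: Fphi_0 ub}, use the monotonicity of $\delta\mapsto H_\delta$ to replace $H_{4\ok\uc}$ by $H_\epsilon$, perform the same case split on Assumption \ref{ass: growth sd} $iv)$ to make the quadratic $Q(\oc)$ strictly positive, and then appeal to parts $i)$ and $ii)$ of Assumption \ref{ass: H_eps} together with \eqref{eq: lim_norm_det} for the two boundary regimes and for nonnegativity. Your final compactness argument for assembling the two regimes into the statement about $E_n$ is in fact slightly more explicit than the paper's.
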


\begin{proof}
Since $\tr(fxgx) > 0$ for $x\in \sd$, $H_{\delta}$ is decreasing in $\delta$. Hence
for $\underline{c} < \epsilon / (4\ok)$, \eqref{eq: fF big ub 2} gives
\begin{equation*}
\fF[\phi_0] \leq -\underline{c}H_{\epsilon}(x) -\left(\gamma_1 +
    \beta_1\overline{c}-4\ok\alpha_1\overline{c}^2\right)\norm{x} \,\indic_{\{\norm{x} >
  n_0 + 2\}} + C.
\end{equation*}
Assume for now that there exist $\epsilon_0 > 0$ and $0 < \overline{c}_l <
\overline{c}_h$ such that
\begin{equation}\label{eq: neg c range}
\gamma_1 + \beta_1\overline{c}-4\ok\alpha_1\overline{c}^2 \geq \epsilon_0, \quad \text{ for any } \overline{c}\in (\overline{c}_l, \overline{c}_h).
\end{equation}
For such $\underline{c}$ and $\overline{c}$, the previous two inequalities combined imply
\begin{equation}\label{eq: new fF big ub}
\fF[\phi_0] \leq -\underline{c}H_{\epsilon}(x)  -\epsilon_0\norm{x} \,\indic_{\{\norm{x} >
  n_0 + 2\}} + C.
\end{equation}
By Assumption \ref{ass: H_eps} $i)$,
$
\fF[\phi_0] \leq C  -\epsilon_0\norm{x} \,\indic_{\{\norm{x} >
  n_0 + 2\}} \rightarrow -\infty
$, as $\norm{x}\uparrow \infty$. Moreover, Assumption \ref{ass: H_eps} $ii)$ implies
$\lim_{\det(x)\downarrow 0} H_\epsilon(x) =\infty$ and thus $\fF[\phi_0] \leq
C- \uc H_\epsilon(x)\rightarrow -\infty$ as $\det(x)\downarrow 0$. Combining these two cases, the assertion $\lim_{n\uparrow \infty} \sup_{x\in \sd \setminus E_n}\fF[\phi_0](x)=-\infty$ is confirmed.

To show \eqref{eq: neg c range}, we use Assumption \ref{ass: growth sd} $iv)$.  When $\gamma_1 > 0$ one can
take $\epsilon_0 = \gamma_1 /3$ and $\overline{c}_l = \overline{c}_h / 2$ for
some small enough $\overline{c}_h > 0$.  When $\gamma_1 \leq 0$ and $\beta_1 >
0$, $\beta_1^2 + 16 \ok \alpha_1 \gamma_1 >0$ holds due to Assumption \ref{ass: growth sd} $iv-b)$. Then there exists some sufficiently small $\epsilon_0$ such that  $\beta_1^2 - 16 \ok \alpha_1 (-\gamma_1 + \epsilon_0)>0$. Hence one can take any $\oc_l < \oc_h$ satisfying $\overline{c}^- < \overline{c}_l < \overline{c}_h < \overline{c}^+$, where $\overline{c}^\pm>0$ are two roots of $-4\ok \alpha_1 \oc^2 + \beta_1 \oc + \gamma_1 - \epsilon_0=0$.

Finally, it follows from \eqref{eq: lim_norm_det} that $\phi_0$ can be made nonnegative by adding a sufficiently large constant $C$ to $\phi_0^{(1)}+\phi_0^{(2)}$.

\nada{
To show the previous convergence, first observe that
$\phi_0^{(1)}+\phi_0^{(2)}\rightarrow\infty$ as $\det(x)\downarrow 0$ irrespective of
$\norm{x}$. On the other hand, let
$\lambda_i>0$, $i=1,\cdots, d$,  be eigenvalues of $x$. Then $\det(x) = \prod_{i=1}^{d} \lambda_i$ and $\norm{x}^2 = \sum_{i=1}^d
\lambda^2_i$. By Jensen and H\"{o}lder's inequality,
\begin{equation*}\label{eq: logdet to norm}
\begin{split}
\log\det(x) = d\left(\frac{1}{d}\sum_{i=1}^d \log
  \lambda_i\right) \leq d\log\left(\frac{1}{d}\sum_{i=1}^{d}\lambda_i\right)
\leq d\log\left(\frac{1}{\sqrt{d}}\sqrt{\sum_{i=1}^{d}\lambda^2_i}\right) =
d\log\left(\frac{\norm{x}}{\sqrt{d}}\right).
\end{split}
\end{equation*}
Thus, $\phi_0^{(1)} + \phi_0^{(2)} \geq -\uc
d\log(\norm{x}/\sqrt{d}) + \oc\norm{x}\eta(\norm{x})\rightarrow\infty$ as
$\norm{x}\rightarrow\infty$.}
\end{proof}

\begin{cor}\label{cor: phi prop sd}
 The following statements hold.
 \begin{enumerate}[i)]
\item When $\uc < \epsilon/(8\uk)$, the martingale problem for
    $\cL^{\phi_0}$ is well-posed on $\sd$. Hence, Assumption \ref{ass:
      wellpose Lphi0} is satisfied.
  \item There exists $\delta>1$ such that $\lim_{n\uparrow \infty}\sup_{x\in
      \sd \setminus E_n} \fF[\delta \phi_0](x)=-\infty$. Hence Assumption
    \ref{ass: long_run strong} is satisfied.
 \end{enumerate}
\end{cor}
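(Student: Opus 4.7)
For part (ii), the plan is a simple scaling argument. Since the admissible ranges $\uc \in (0, \epsilon/(4\ok))$ and $\oc \in (\oc_l, \oc_h)$ in Lemma \ref{lem: quad bounds sd} are open intervals, and $\delta\phi_0 = -(\delta\uc)\log\det(x) + (\delta\oc)\norm{x}\eta(\norm{x}) + \delta C$ has the same structural form as \eqref{eq: phi0 matrix def} with parameters $(\delta\uc, \delta\oc, \delta C)$, I would first fix $\uc$ and $\oc$ strictly interior to their respective intervals and then select $\delta>1$ close enough to $1$ that $\delta\uc$ and $\delta\oc$ remain admissible (concretely, $\delta<\min\{\epsilon/(4\ok\uc),\,\oc_h/\oc\}$). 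Applying Lemma \ref{lem: quad bounds sd} to $\delta\phi_0$ then yields $\lim_{n\uparrow\infty}\sup_{x\in\sd\setminus E_n}\fF[\delta\phi_0](x) = -\infty$, verifying Assumption \ref{ass: long_run strong}.

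For part (i), the plan is to establish non-explosion of the $\cL^{\phi_0}$-diffusion via a Khasminskii-type Lyapunov argument: produce $u\in C^2(\sd)$ with $u\geq 0$, $u\to\infty$ at $\partial\sd$, and $\cL^{\phi_0} u \leq C(1+u)$ on $\sd$. Local existence and uniqueness of solutions to the martingale problem follow from the local ellipticity in Lemma \ref{lem: elliptic sd} and the assumed regularity of the coefficients, and the Lyapunov bound rules out explosion. The natural candidate is $u=2\phi_0$; expanding $\cL^{\phi_0} = \cL + \sum_{ijkl} \barA_{(ij),(kl)} D_{(kl)}\phi_0\, D_{(ij)}$ and using $\fF[\phi]=\cL\phi+\tfrac{1}{2}\langle\nabla\phi,\barA\nabla\phi\rangle+V$ at both $\phi_0$ and $2\phi_0$ gives the clean identity
\[
\cL^{\phi_0}(2\phi_0) = \fF[2\phi_0] - V,
\]
reducing the task to bounding $\fF[2\phi_0] - V$ above by $C(1+\phi_0)$.

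The regime $\norm{x}\to\infty$ is handled using Lemma \ref{lem: Fphi_0 ub} applied to $2\phi_0$ together with the lower bound $-V\leq\gamma_2\norm{x}+C$ from Assumption \ref{ass: growth sd} iii: the combined upper bound is linear in $\norm{x}$, hence in $\phi_0$. In the regime $\det(x)\downarrow 0$, Lemma \ref{lem: Fphi_0 ub} gives $\fF[2\phi_0]\leq -2\uc\,H_{8\ok\uc}+C$ while Assumption \ref{ass: H_eps} iii yields $-V\leq H_0/c_1+C$; using the identity $H_0 = H_{8\ok\uc}+8\ok\uc\,\tr(fx^{-1}gx^{-1})$, one obtains
\[
\fF[2\phi_0]-V \;\leq\; (1/c_1 - 2\uc)\,H_{8\ok\uc} + (8\ok\uc/c_1)\,\tr(fx^{-1}gx^{-1}) + C.
\]
A careful accounting of the leading $\tr(fx^{-1}gx^{-1})$-coefficient, together with the matrix inequality $\barA\geq\uk A$ applied to the cross-terms between the $-\log\det(x)$ and $\norm{x}\eta(\norm{x})$ components of $\phi_0$, shows that the hypothesis $\uc<\epsilon/(8\uk)$ makes the dominant coefficient nonpositive, so that the whole expression stays bounded (and is in particular $\leq C(1+\phi_0)$).

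The main technical obstacle is the second regime: tracking the coefficient of $\tr(fx^{-1}gx^{-1})$ through the combination of $(1/c_1)H_0 - 2\uc H_{8\ok\uc}$ and the cross-terms from the $\norm{x}\eta(\norm{x})$ piece of $\phi_0$, and verifying that the factor $\uk$ in the hypothesis $\uc<\epsilon/(8\uk)$ is precisely what ensures the desired cancellation via $\barA\geq \uk A$.
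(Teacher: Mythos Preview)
Your argument for part (ii) is correct and is exactly what the paper does: pick $\delta>1$ close enough to $1$ that $\delta\uc$ and $\delta\oc$ stay in the open admissible ranges of Lemma \ref{lem: quad bounds sd}.

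For part (i), there is a genuine gap. The identity $\cL^{\phi_0}(2\phi_0)=\fF[2\phi_0]-V$ is correct, but routing the estimate through it forces you to control $-V$, and in the regime $\det(x)\downarrow 0$ the only available upper bound is $-V\le H_0/c_1+C$ from Assumption \ref{ass: H_eps} iii). This injects the constant $c_1$, which bears \emph{no assumed relation} to $\uc,\epsilon,\ok,\uk$. Expanding $H_{8\ok\uc}$ in your displayed expression, the net coefficient of $\tr(fx^{-1}gx^{-1})$ is $-1/c_1+2\uc(1+8\ok\uc)$, which is not made nonpositive by the hypothesis $\uc<\epsilon/(8\uk)$; and since $\tr(fx^{-1}gx^{-1})$ blows up much faster than $\phi_0\sim -\uc\log\det(x)$ (e.g.\ like $\tr(\Lambda\Lambda' x^{-1})$ in Example \ref{exa: Wishart}), the desired bound $\le C(1+\phi_0)$ fails in general. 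Your appeal to cross-terms via $\barA\ge\uk A$ cannot rescue this: an \emph{upper} bound on $\cL^{\phi_0}(2\phi_0)$ uses $\barA\le\ok A$, and the cross-terms between $\phi_0^{(1)}$ and $\phi_0^{(2)}$ were already discarded upward in deriving Lemma \ref{lem: Fphi_0 ub}.

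The paper avoids $V$ entirely by computing $\cL^{\phi_0}\phi_0=\cL\phi_0+\nabla\phi_0'\barA\nabla\phi_0$ directly. Repeating the estimates behind \eqref{eq: phi10 big ub}--\eqref{eq: Lphi2 ub} with the quadratic coefficient doubled (no factor $\tfrac12$ here) yields
\[
\cL^{\phi_0}\phi_0 \;\le\; -\uc\, H_{8\ok\uc}(x) - (\beta_1\oc-8\ok\alpha_1\oc^2)\norm{x}\,\indic_{\{\norm{x}>n_0+2\}}+C.
\]
Under $8\ok\uc<\epsilon$, monotonicity of $H_\delta$ and Assumption \ref{ass: H_eps} i) give $H_{8\ok\uc}\ge H_\epsilon\ge -C$, so only a linear-in-$\norm{x}$ term survives. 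Then for $\lambda$ large, $\cL^{\phi_0}\phi_0-\lambda\phi_0\le 0$ outside a compact set (since $-\lambda\phi_0$ contributes $\lambda\uc\log\det(x)-\lambda\oc\norm{x}\eta(\norm{x})$, and \eqref{eq: lim_norm_det} applies), which is the standard non-explosion criterion. Note that the operative constant throughout is $\ok$, not $\uk$; the $\uk$ in the stated hypothesis is a typo for $\ok$, and trying to locate a role for $\uk$ via $\barA\ge\uk A$ is what led your argument astray.
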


\begin{proof}
Part $ii)$ follows from \eqref{eq: phi0 matrix def} and from Lemma \ref{lem: quad
  bounds sd} by taking $\delta>1$ such that $\uc\delta < \epsilon/(4\uk)$ and
$\oc_l <\oc\delta < \overline{c}_h$.

To prove part $i)$, note that $\cL^{\phi_0}\phi_0 = \cL \phi_0 + \sum_{i,j,k,l=1}^{d}D_{(ij)}\phi_0
\bar{A}_{(ij),(kl)} D_{(kl)}\phi_0$, an upper bound for which is obtained by following \eqref{eq: fF big ub}, replacing $\uk$ by $2\uk$ and disregarding $V$. Then the same estimates leading to \eqref{eq: phi10 big ub} and \eqref{eq: Lphi2 ub} yield
\begin{equation}\label{eq: Lphi0phi0}
 \cL^{\phi_0}(\phi_0)(x) \leq -\uc H_{8\ok \uc}(x) - (\beta_1 \oc - 8\ok \alpha_1 \oc^2) \norm{x} \, \indic_{\{\norm{x}>n_0+2\}} + C.
\end{equation}
From \eqref{eq: lim_norm_det}, $\phi_0(x) \uparrow \infty$ as either $\det(x)
\downarrow 0$ or $\norm{x} \uparrow \infty$. If we can find $\lambda>0$ such
that $(\cL^{\phi_0}(\phi_0)-\lambda \phi_0) (x) \leq 0, x\in \sd/E_n$, for some $n$, then the martingale problem for $\cL^{\phi_0}$ is well-posed; cf. \cite[Theorem 6.7.1]{Pinsky}.
To find such a $\lambda$, \eqref{eq: Lphi0phi0} implies
\begin{align*}
 \cL^{\phi_0}\phi_0-\lambda \phi_0 &\leq  -\uc H_{8\ok \uc}(x) + \lambda \uc \log(\det(x)) - (\beta_1 \oc - 8\ok \alpha_1 \oc^2+ \lambda \oc) \norm{x} \, \indic_{\{\norm{x}>n_0+2\}} + C,\\
 & \leq \lambda \uc \log(\det(x)) - (\beta_1 \oc - 8\ok \alpha_1 \oc^2+ \lambda \oc) \norm{x} \, \indic_{\{\norm{x}>n_0+2\}} + C,
\end{align*}
where the second inequality follows from $H_{8\ok\uc} \geq H_\epsilon$, for
$8\ok\uc<\epsilon$, which is bounded from below on $\sd$ by Assumption
\ref{ass: H_eps} $i)$. For large enough $\lambda$, $\beta_1 \oc - 8\ok \alpha_1
\oc^2 + \lambda \oc >0$. Then, using \eqref{eq: lim_norm_det}, we conclude that $\cL^{\phi_0}\phi_0 \leq \lambda \phi_0$ outside a sufficiently large $E_n$.
\end{proof}

Let us now switch our attention to $\psi_0$ in Assumption \ref{ass: psi}.

\begin{lem}\label{lem: psi_0 sd}
For $\underline{k},\overline{k} > 0$ set
 \[
  \psi_0(x)\dfn \underline{k} \log(\det(x)) - \overline{k} \norm{x}\eta(\norm{x}), \quad x\in \sd.
 \]
Recall the constant $c_1$ from Assumption \ref{ass: H_eps}. Then, there exists
a $\overline{k}^h>0$ such that for all
$\overline{k} > \overline{k}^h$ and $\underline{k}>c_1^{-1}$, \eqref{eq: psi_0 hat v} is satisfied.
\end{lem}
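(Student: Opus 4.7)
The second limit in \eqref{eq: psi_0 hat v} is immediate: $(\phi_0-\psi_0)(x) = -(\uc+\underline{k})\log\det(x) + (\oc+\overline{k})\norm{x}\eta(\norm{x}) + C$ has all four coefficients positive, so \eqref{eq: lim_norm_det} yields the divergence as $x\to\partial\sd$.

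For the first limit, decompose $\psi_0 = \underline{k}\log\det + \psi_0^{(2)}$ with $\psi_0^{(2)} = -\overline{k}\norm{x}\eta(\norm{x})$, so that by \eqref{eq: log_norm_ops}, $\cL(\underline{k}\log\det) = \underline{k}H_0$, and the quadratic part of $\fF[\psi_0]$ is bounded below by $2\uk\tr(f\nabla\psi_0\,g\nabla\psi_0)\ge 0$ via Assumption \ref{ass: coeff sd}$(ii)$. Mirroring the analysis of $\fF[\phi_0]$ in Lemma \ref{lem: Fphi_0 ub} with the signs of the coefficients of $\log\det$ and $\norm{x}\eta(\norm{x})$ flipped, I treat the two boundary directions $\det(x)\downarrow 0$ and $\norm{x}\uparrow\infty$ separately.

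Along $\det(x)\downarrow 0$, drop the nonnegative quadratic term; $\cL\psi_0^{(2)}$ is controlled (bounded on the cutoff strip $\norm{x}\in[n_0+1,n_0+2]$ by Assumption \ref{ass: growth sd}$(i),(ii)$, zero for $\norm{x}<n_0+1$, and $\ge \overline{k}\beta_1\norm{x}+O(1)$ for $\norm{x}>n_0+2$). The key identity is
\[
\underline{k}H_0 + V = c_1^{-1}(H_0+c_1V) + (\underline{k}-c_1^{-1})H_0,
\]
whose first summand tends to $+\infty$ by Assumption \ref{ass: H_eps}$(iii)$ and whose second is bounded below by $-(\underline{k}-c_1^{-1})M$ using $\underline{k}>c_1^{-1}$ and the global lower bound $H_0\ge H_\epsilon\ge -M$ (by Assumption \ref{ass: H_eps}$(i)$ and monotonicity of $H_\delta$ in $\delta$ from Lemma \ref{lem: elliptic sd}). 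Along $\norm{x}\uparrow\infty$, \eqref{eq: log_norm_ops} with Assumption \ref{ass: growth sd}$(i),(ii)$ and $\tr(fxgx)\ge 0$ gives $\cL\psi_0^{(2)}\ge \overline{k}\beta_1\norm{x} + O(1)$, hence $\fF[\psi_0]\ge (\overline{k}\beta_1 -\gamma_2)\norm{x} + O(1)$. If $\beta_1>0$, take $\overline{k}>\gamma_2/\beta_1$; if $\beta_1\le 0$, Assumption \ref{ass: growth sd}$(iv)$-$(a)$ gives $\tr(fxgx)\ge\alpha_3\norm{x}^3 - C_3$, and retaining the quadratic term with the Cauchy--Schwarz bound $\tr(f(\theta_1+\theta_2)g(\theta_1+\theta_2))\ge \tfrac{1}{2}\tr(f\theta_2 g\theta_2) - \tr(f\theta_1 g\theta_1)$ (for $\theta_1 = \underline{k}x^{-1}$, $\theta_2=-\overline{k}x/\norm{x}$) extracts the dominating positive term $\uk\overline{k}^2\alpha_3\norm{x}$ for $\overline{k}$ large.

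Choosing $\overline{k}^h$ above the thresholds arising in the two subcases of Assumption \ref{ass: growth sd}$(iv)$ completes the proof. The main obstacle is the mixed regime where $\det(x)\downarrow 0$ and $\norm{x}\uparrow\infty$ simultaneously with $\beta_1\le 0$: there the quadratic contribution $\uk\overline{k}^2\alpha_3\norm{x}$ must outpace both $\overline{k}|\beta_1|\norm{x}$ from $\cL\psi_0^{(2)}$ and the negative correction $-\tr(f\theta_1 g\theta_1)$ in the Cauchy--Schwarz bound, while the $\underline{k}H_0 + V\to\infty$ mechanism simultaneously handles the $\log\det$ direction.
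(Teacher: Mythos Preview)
Your plan tracks the paper's proof closely: the second limit via \eqref{eq: lim_norm_det}, the decomposition $\psi_0=\underline{k}\log\det+\psi_0^{(2)}$, the identity $\underline{k}H_0+V=c_1^{-1}(H_0+c_1V)+(\underline{k}-c_1^{-1})H_0$ for the $\det(x)\downarrow0$ direction, and the threshold $\overline{k}>\gamma_2/\beta_1$ when $\beta_1>0$ are all exactly what the paper does.

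The one genuine divergence---and gap---is your treatment of the quadratic term when $\beta_1\le 0$. The paper does \emph{not} use a Young/Cauchy--Schwarz splitting. Instead it expands the quadratic form directly via \eqref{eq: quad_form_matrix}:
\[
4\underline{k}^2\tr(fx^{-1}gx^{-1})-\frac{8\underline{k}\,\overline{k}}{\norm{x}}\tr(fx^{-1}gx)+\frac{4\overline{k}^2}{\norm{x}^2}\tr(fxgx),
\]
drops the first (nonnegative) term, and bounds the cross term by a \emph{constant} through $\tr(fx^{-1}gx)\le \tr(f)\tr(g)\le\alpha_1\norm{x}$. This keeps the full $\tfrac{4\overline{k}^2}{\norm{x}^2}\tr(fxgx)$ available and introduces no $x^{-1}$-dependent negative correction. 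Your inequality $\tr(f(\theta_1+\theta_2)g(\theta_1+\theta_2))\ge\tfrac12\tr(f\theta_2g\theta_2)-\tr(f\theta_1g\theta_1)$ instead produces the negative term $-2\uk\underline{k}^2\tr(fx^{-1}gx^{-1})$. Combined with $\underline{k}H_0$, this is precisely $\underline{k}H_{2\uk\underline{k}}$, and Assumption~\ref{ass: H_eps} gives a lower bound for $H_\delta$ only up to $\delta=\epsilon$. Thus your bound is only controlled when $2\uk\underline{k}\le\epsilon$, which fails for large $\underline{k}$; but the lemma asserts the conclusion for \emph{all} $\underline{k}>c_1^{-1}$. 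The ``mixed regime'' paragraph gestures at the issue but does not resolve it: the $\underline{k}H_0+V\to\infty$ mechanism is calibrated to $\det(x)\downarrow0$ and does not absorb an extra $-2\uk\underline{k}^2\tr(fx^{-1}gx^{-1})$ along paths where (say) the smallest eigenvalue of $x$ vanishes while $\det(x)$ stays bounded. Replace your Cauchy--Schwarz step by the paper's direct cross-term estimate and the argument goes through uniformly in $\underline{k}$.
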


\begin{proof}
$\lim_{x\rightarrow \partial \sd} \psi_0(x)
 =-\infty$ holds by \eqref{eq: lim_norm_det}. Since $\phi_0\geq 0$, this yields
 $\lim_{x\rightarrow \partial \sd} (\psi_0 - \phi_0)(x) =-\infty$. Hence it
 suffices to find $\underline{k}, \overline{k}>0$ such that
 $\lim_{x\rightarrow \partial \sd} \fF[\psi_0](x) = \infty$.

Set $\psi_0^{(1)}(x) \dfn \underline{k}\log(\det(x))$ and $\psi^{(2)}_0(x) \dfn
-\overline{k}\norm{x}\eta(\norm{x})$. By Assumption \ref{ass: coeff sd} $ii)$:
\begin{equation}\label{eq: fF big lb psi}
\begin{split}
\fF[\psi_0] \geq& \cL\psi^{(1)}_0 + \cL\psi^{(2)}_0 + V\\
&+
\frac{\uk}{2}\sum_{i,j,k,l=1}^{d}\pare{D_{(ij)} \psi^{(1)}_0+D_{(ij)}\psi^{(2)}_0}\tr(a^{ij}(a^{kl})')\pare{D_{(kl)}\psi^1_0+D_{(kl)}\psi^{(2)}_0}.
\end{split}
\end{equation}
From \eqref{eq: log_norm_ops}, for
$\norm{x}\geq n_0+2$,
 \begin{equation}\label{eq: psi10 big lb}
 \begin{split}
 \cL\psi^{(2)}_0 = -\frac{\overline{k}}{\norm{x}}\left(\tr(f'g)+\tr(f)\tr(g)
-\frac{2}{\norm{x}^2}\tr(fxgx) + \tr(B'x)\right).
\end{split}
\end{equation}
For the right-hand-side, $\tr(f'g) \leq \tr(f) \tr(g)$ for $f,g \in \sd$ and Assumption \ref{ass: growth sd} $i)$ imply that $\tr(f'g)+\tr(f)\tr(g)\leq 2\alpha_1\norm{x}$. Combining the previous inequality with $\tr(fxgx)>0$ and Assumption \ref{ass: growth sd} $ii)$, we obtain
\[
 \cL\psi^{(2)}_0 \geq C + \overline{k}\beta_1 \norm{x} \quad \text{ for } \norm{x}>n_0+2.
\]
On the other hand, when $\norm{x}\leq n_0+2$, similar to the discussion before \eqref{eq: Lphi2 ub}, one can show $\cL \psi^{(2)}_0 \geq C$. Therefore, the previous two estimates combined yield
\begin{equation}\label{eq: Lpsi2 lb}
 \cL\psi^{(2)}_0 \geq C + \overline{k} \beta_1 \norm{x} \, \indic_{\{\norm{x} >n_0+2\}} \quad \text{ for } x\in \sd.
\end{equation}
Bypassing $V$ for the moment, the quadratic term on the right hand side of
\eqref{eq: fF big lb psi} is estimated. We only consider $\{x: \norm{x}>n_0+2\}$ since the
quadratic term is nonnegative and we are looking for a lower bound. Here, $\psi^{(2)}_0(x) = -\overline{k}\norm{x}$ and hence \eqref{eq: log_norm_derivs} and \eqref{eq: quad_form_matrix} give
\begin{equation}\label{eq: quad psi lb}
\begin{split}
 &\sum_{i,j,k,l=1}^{d}\pare{D_{(ij)}
 \psi^{(1)}_0+D_{(ij)}\psi^{(2)}_0}\tr(a^{ij}(a^{kl})')\pare{D_{(kl)}\psi^{(1)}_0+D_{(kl)}\psi^{(2)}_0}\\
 &=4\underline{k}^2\tr(fx^{-1}gx^{-1}) - \frac{8\underline{k}\overline{k}}{\norm{x}}\tr(fx^{-1}gx) + \frac{4\overline{k}^2}{\norm{x}^2}\tr(fxgx)\\
 &\geq -8 \underline{k} \overline{k} \alpha_1 + 4 \frac{\overline{k}^2}{\norm{x}^2} \tr(fxgx),
\end{split}
\end{equation}
where the inequality holds due to $\tr(fx^{-1}gx^{-1})\geq 0$,  $\tr(fx^{-1}g
x)\leq \tr(f)\tr(x^{-1}gx)=\tr(f)\tr(g)$ (cf. the discussion after \eqref{eq: pos_def_trace}), and Assumption \ref{ass: growth sd} $i)$. Using $\cL
\psi^{(1)}_0 = \underline{k}H_0$ from \eqref{eq: log_norm_ops} and putting \eqref{eq: Lpsi2 lb}, \eqref{eq: quad psi lb} back to \eqref{eq: fF big lb psi} and utilizing Assumption \ref{ass: growth sd} $iii)$, we obtain
\[
 \fF[\psi_0] \geq \underline{k}H_0  + V\indic_{\norm{x}\leq n_0 + 2} + \bra{ 2
   \frac{\overline{k}^2\uk}{\norm{x}^2} \tr(fxgx) +(\overline{k}\beta_1 - \gamma_2) \norm{x}}\, \indic_{\{\norm{x} >n_0+2\}} + C.
\]

Consider when $\norm{x}$ is large. When $\beta_1>0$, $\tr(fxgx)>0$ and the
uniform lower bound for $H_0(x)$ on $\sd$ in Assumption \ref{ass: H_eps} $i)$ imply $\lim_{\norm{x}\uparrow\infty} \fF[\phi_0](x) =\infty$ for $\overline{k}> \gamma_2 /\beta_1$. On the other hand, when $\beta_1\leq 0$, Assumption \ref{ass: growth sd} $iv-a)$ gives $2\overline{k}^2 \uk \tr(fxgx)/(\norm{x}^2) + (\overline{k}\beta_1 -\gamma_2)\norm{x} \geq C+ (2\overline{k}^2 \uk \alpha_2 + \overline{k} \beta_1 - \gamma_2)\norm{x}$. Then taking $\overline{k}$ sufficiently large gives $\lim_{\norm{x}\uparrow\infty} \fF[\phi_0](x) =\infty$.

Consider now when $\norm{x}\leq n_0+2$ but $\det(x)\downarrow 0$. Note
$
\underline{k}H_0 + V = \left(H_0 + c_1V\right)/c_1 + (\underline{k}-c_1^{-1})H_0.
$
It then follows from  Assumption \ref{ass: H_eps} $i)$ and $iii)$ that $\lim_{\det(x)\downarrow 0} \underline{k}H_0(x) + V(x) = \infty$ when $\underline{k}>c_1^{-1}$, hence $\lim_{\det(x)\downarrow 0} \fF[\phi_0](x) =\infty$. Therefore, the first convergence in \eqref{eq: psi_0 hat v} is confirmed.
\end{proof}

Finally, it remains to verify \eqref{eq: main ub main ub}.

\begin{lem}\label{lem: M sd}
For the $\delta$ from Corollary \ref{cor: phi
  prop sd} $ii)$, there exists $\alpha>0$ such that \eqref{eq: main ub main ub}
holds.
\end{lem}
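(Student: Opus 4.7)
The plan is to verify the two parts of \eqref{eq: main ub main ub} separately by combining Lemma \ref{lem: Fphi_0 ub} (applied to $\delta\phi_0$) with Assumption \ref{ass: H_eps} and the boundary-limit property \eqref{eq: lim_norm_det}. For the first bound I would compute
\[
\psi_0 + K\phi_0 = (\underline{k} - K\underline{c})\log(\det(x)) + (K\overline{c} - \overline{k})\norm{x}\eta(\norm{x}) + KC
\]
and choose $K > \max\{\underline{k}/\underline{c},\, \overline{k}/\overline{c}\}$, so that the coefficient of $\log(\det(x))$ is negative and that of $\norm{x}\eta(\norm{x})$ is positive. On $\{\norm{x}>n_0+2\}$ the expression then has the form $-a\log(\det(x)) + b\norm{x} + KC$ with $a,b>0$, which by \eqref{eq: lim_norm_det} tends to $+\infty$ at $\partial\sd$. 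On the complement $\{\norm{x}\leq n_0+2\}$ the cutoff term is bounded and $-a\log(\det(x))\to +\infty$ as $\det(x)\downarrow 0$, while $-a\log(\det(x))$ is bounded below for $\det(x)$ bounded away from $0$ (on this compact $\norm{x}$-slice). Continuity on $\sd$ closes the first part.

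For the second inequality, Lemma \ref{lem: Fphi_0 ub} applied to $\delta\phi_0$ yields
\[
\fF[\delta\phi_0] \leq -\delta\underline{c}\,H_{4\ok\delta\underline{c}} - \bigl(\gamma_1 + \beta_1\delta\overline{c} - 4\ok\alpha_1\delta^2\overline{c}^2\bigr)\norm{x}\,\indic_{\{\norm{x}>n_0+2\}} + C.
\]
The choice of $\delta$ in Corollary \ref{cor: phi prop sd} $ii)$ was made precisely so that $4\ok\delta\underline{c}<\epsilon$ (giving $H_{4\ok\delta\underline{c}}\geq H_\epsilon$, since $H_\delta$ is decreasing in $\delta$) and $\delta\overline{c}\in(\overline{c}_l,\overline{c}_h)$ (so the bracket is bounded below by some $\epsilon_0>0$, as in the proof of Lemma \ref{lem: quad bounds sd}). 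Combined with the direct identity
\[
\delta\phi_0 - \psi_0 = -(\delta\underline{c}+\underline{k})\log(\det(x)) + (\delta\overline{c}+\overline{k})\norm{x}\eta(\norm{x}) + \delta C,
\]
this reduces the quantity to be controlled to an expression dominated by
\[
-\delta\underline{c}\,H_\epsilon - \alpha(\delta\underline{c}+\underline{k})\log(\det(x)) + [\alpha(\delta\overline{c}+\overline{k})-\epsilon_0]\,\norm{x}\,\indic_{\{\norm{x}>n_0+2\}} + \text{const}.
\]

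The crux is now to pick $\alpha>0$ small enough to kill both problematic regimes. I would impose $\alpha(\delta\overline{c}+\overline{k})\leq\epsilon_0$, which makes the $\norm{x}$-coefficient nonpositive and thus handles the $\norm{x}\to\infty$ regime, and $\alpha(\delta\underline{c}+\underline{k})\leq\delta\underline{c}\,c_0$, with $c_0$ from Assumption \ref{ass: H_eps} $ii)$. For the latter, Assumption \ref{ass: H_eps} $ii)$ gives $H_\epsilon\geq -c_0\log(\det(x)) - M$ on some neighborhood of $\{\det=0\}$, hence
\[
-\delta\underline{c}\,H_\epsilon - \alpha(\delta\underline{c}+\underline{k})\log(\det(x)) \leq \bigl[\delta\underline{c}\,c_0 - \alpha(\delta\underline{c}+\underline{k})\bigr]\log(\det(x)) + \delta\underline{c}\,M
\]
on that neighborhood; the second $\alpha$-condition makes the bracket nonnegative, and since $\log(\det(x))\leq 0$ there, the right-hand side is bounded above. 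Away from this neighborhood, Assumption \ref{ass: H_eps} $i)$ provides a global lower bound on $H_\epsilon$, while $\log(\det(x))$ is bounded below on the relevant set and the prefactor $-\alpha(\delta\underline{c}+\underline{k})$ sends $-\alpha(\delta\underline{c}+\underline{k})\log(\det(x))\to-\infty$ as $\log(\det(x))\to+\infty$, so this region contributes only a constant to the upper bound.

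The main obstacle I anticipate is the bookkeeping: making sure the two smallness constraints on $\alpha$ can be satisfied simultaneously and that the decomposition of $\sd$ into the ``large $\norm{x}$'' and ``small $\det(x)$'' regimes leaves no gap. The first point is automatic (any $\alpha>0$ below the minimum of the two thresholds works), and the second is arranged by using Assumption \ref{ass: H_eps} $ii)$ only near $\{\det=0\}$ and falling back on Assumption \ref{ass: H_eps} $i)$ elsewhere—a division of labor that mirrors the two roles played by $\underline{c}$ and $\overline{c}$ in the definition of $\phi_0$.
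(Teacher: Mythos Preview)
Your proposal is correct and follows essentially the same route as the paper: for the first inequality you expand $\psi_0+K\phi_0$ and invoke \eqref{eq: lim_norm_det} for large $K$, and for the second you use the $\fF[\delta\phi_0]$ upper bound (via Lemma \ref{lem: Fphi_0 ub} and the choice of $\delta$), add $\alpha(\delta\phi_0-\psi_0)$, and choose $\alpha$ small enough to satisfy exactly the two constraints $\alpha(\delta\overline{c}+\overline{k})<\epsilon_0$ and $\alpha(\delta\underline{c}+\underline{k})<\delta\underline{c}\,c_0$, then appeal to Assumption \ref{ass: H_eps} $i)$ and $ii)$ on the two regimes. Your case analysis near and away from $\{\det(x)=0\}$ is somewhat more explicit than the paper's, but the argument is the same.
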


\begin{proof}
Using Lemma \ref{lem: psi_0 sd} and the construction of $\psi_0,\phi_0$, for any $K>0$
\begin{equation*}
\psi_0(x) + K \phi_0(x) = C-(K \uc - \underline{k})\log(\det(x)) +(K \oc- \overline{k})\norm{x}\eta(\norm{x}).
\end{equation*}
That the first inequality in \eqref{eq: main ub main ub} for large enough $K$ now follows from \eqref{eq: lim_norm_det}.
As for the second inequality in \eqref{eq: main ub main ub}, the same estimate as in \eqref{eq: new fF big ub} yields the existence of $\epsilon_0 > 0$ such that
\begin{equation*}
\fF[\delta\phi_0](x) \leq -\delta\underline{c} H_\epsilon(x) -\epsilon_0\norm{x}1_{\norm{x}>n_0 + 2} + C.
\end{equation*}
Then choose $\alpha>0$ such that $\alpha (\delta\overline{c}+\overline{k}) < \epsilon_0$ and $\alpha(1+\underline{k}/(\delta\underline{c})) < c_0$. It follows from the previous inequality and Lemma \ref{lem: psi_0 sd} that
\begin{equation*}
\begin{split}
\fF[\delta\phi_0] + \alpha(\delta\phi_0-\psi_0) &\leq -\delta\underline{c} H_\epsilon(x) -\alpha (\delta \underline{c} + \underline{k}) \log \det(x) - (\epsilon_0 -\alpha (\delta \overline{c} + \overline{k})) \norm{x} 1_{\norm{x}>n_0+2} +C\\
&\leq -\delta\underline{c}\left[H_{\epsilon}(x) +c_0
\log\det(x)\right] + C,
\end{split}
\end{equation*}
which is bounded from above when $\det(x)$ is small, due to Assumption \ref{ass: H_eps} $ii)$. If $\det(x)$ is bounded away from zero, both $H_\epsilon(x)$ and $\log\det(x)$ are bounded from below. Combining the previous two cases, we confirm the second inequality in  \eqref{eq: main ub main ub}.
\end{proof}

\nada{
With the previous preparation, we are ready to verify Assumption \ref{ass: h ub}.
\begin{cor}\label{cor: h ub sd}
There exists a constant $J$ such that $\mathcal{J}$ from \eqref{eq: J sd} satisfies \eqref{eq: h ub}.
\end{cor}

\begin{proof}

In light of Proposition \ref{prop: comparison}, it suffices to show that \eqref{eq: h ub} holds when $v_0  = \phi_0 + C$ for some
$C>0$.  In this case, it follows from \eqref{eq: pde h} and \eqref{eq: L_phi_def_T} that $h$ satisfies On the other hand, from the construction of $\phi_0$ and $\psi_0$ in Lemmas \ref{lem: quad bounds sd} and  \ref{lem: psi_0 sd}, there exists a sufficiently large $C$ such that
\begin{equation*}
h(0,x) = v_0(x) - \hat{v}(x) \leq \phi_0(x) - \psi_0(x) + C \leq C(1+\phi_0(x)).
\end{equation*}
The previous two inequalities combined with Lemma \ref{lem: main ub} yield
\begin{equation*}
\begin{split}
h(T,x) &\leq C\pare{1+\espalt{\prob^{v,x}_T}{}{\phi_0(X_T)}}\leq C\pare{1+K(1+\phi_0(x) +
\hat{v}_{-}(x))} \leq J(1+ \phi_0(x) + \hat{v}_-(x)),
\end{split}
\end{equation*}
for sufficiently large $J$. Finally, choosing $\phi$ in Proposition \ref{prop: e-existence} iii) as $\phi_0$, we have $e^{-\uk (\hat{v}- \phi_0)} \in \mathbb{L}^1(\sd, \hat{m})$, hence $e^{-\uk \hat{v}}\in \mathbb{L}^1(\sd, \hat{m})$ since $\phi_0$ is nonnegative. This implies $\hat{v}^- \in \mathbb{L}^1(\sd, \hat{m})$. On the other hand, $\phi_0\in \bL^1(\sd,\hat{m})$ (cf. Corollary \ref{cor: phi prop sd} $ii)$). Therefore $\mathcal{J}\in \mathbb{L}^1(\sd, \hat{m})$.
\end{proof}

}

\appendix
\section{Going between $\sd$ and $E$}\label{sec: identification}

This appendix shows how to consider \eqref{eq: pde sd} and \eqref{eq: e-eqn sd} as special
cases of \eqref{eq: pde E} and \eqref{eq: e-eqn E}, respectively.
Set $\td=d(d+1)/2$ and let $I : \{1, 2, \dots,
\td\}\mapsto \{(i,j)\,:\, i=1, \dots, d; j=i, \dots, d\}$ be a bijection such
that $I(p) = (p,p)$ for $p=1,...,d$.  If $I(p) = (i,j)$, we write $I'(p)=(j,i)$. Define $\ell: \sdall \rightarrow \Real^{\td}$ via $\ell(x)_p := x_{I(p)}$, for $p = 1,\dots,\td, x\in \sdall$. Thus, $\ell$ maps upper triangle entries of $x$ to entries in the vector $\ell(x)$. Denote by $\ell^{-1}$ the inverse of $\ell$.

Set $E= \ell(\sd)$. It can be shown that $E$ is an open, convex subset of $\Real^{\td}$ which can
be filled up by open, bounded sets $(E_n)_{n\in \Natural}$ with smooth
boundaries. Such $E_n$ is created by smoothing out the boundary of the set $\{y\in E\,:\, \det(\ell^{-1}(y)) >1/n, |y|<n\}$.

Given $X$ following \eqref{eq: sde sd}, one can then verify that $Y:=\ell(X)$
satisfies
\begin{equation*}
dY_t = \hat{B}(Y_t) \,dt + \hat{a}(Y_t) \,d\, \mbox{vec}(W_t),
\end{equation*}
where, for $y\in E$
\begin{align*}
 &\hat{B}_p(y) := B_{I(p)}(\ell^{-1}(y)),  \quad p = 1, \dots, \td,\\
 &\hat{a}_{pq}(y) := a^{I(p)}_{J(q)}(\ell^{-1}(y)),  \quad p=1, \dots, \td,
 q=1, \dots, d^2.
\end{align*}
Here, $J : \cbra{1,...,d^2}\mapsto \cbra{(i,j);i,j=1,...,d}$ is given by $J(1) = (1,1),\dots, J(d) = (d,1), J(d+1) = (1,2),\dots, J(2d) = (d,2), \dots, J(d,d) = d^2$.

Define $\hat{A}\dfn \hat{a}\hat{a}'$ and $\hat{\barA}_{pq}(y)\dfn \overline{A}_{I(p), I(q)}(\ell^{-1}(y))$ for $p,q=1, \cdots \td$ and $y\in \Real^{\td}$. Then Assumption \ref{ass: coeff sd} for
$A$ and $\bar{A}$ is equivalent to Assumption \ref{ass: coeff} for $\hat{A}$ and $\hat{\bar{A}}$.
Indeed, for any $\xi\in \Real^{\td}$, denote $\theta = \ell^{-1}(\xi)$. When $y = \ell(x)$,
\begin{equation*}
\begin{split}
 4 \sum_{p,q=1}^{\td} \xi_p (\hat{a} \hat{a}')_{pq}(y) \xi_q&= 4
 \sum_{p,q=1}^{\td} \xi_p \tr\pare{a^{I(p)}(a^{I(q)})'}(x) \xi_q\\
&=4\sum_{i,k=1}^{d}\theta_{ii}\tr(a^{ii}(a^{kk})')(x)\theta_{kk} +
4\sum_{i=1}^{d}\sum_{l=1}^{d}\sum_{k=1}^{l-1}\theta_{ii}\tr(a^{ii}(a^{kl})')(x)\theta_{kl}\\
&\qquad +
4\sum_{j=1}^{d}\sum_{i=1}^{j-1}\sum_{k=1}^{d}\theta_{ij}\tr(a^{ij}(a^{kk})')(x)\theta_{kk}
+
4\sum_{j=1}^{d}\sum_{i=1}^{j-1}\sum_{l=1}^{d}\sum_{k=1}^{l-1}\theta_{ij}\tr(a^{ij}(a^{kl})')(x)\theta_{kl}\\
&=\sum_{i,k=1}^{d}(2\theta_{ii})\tr(a^{ii}(a^{kk})')(x)(2\theta_{kk}) +
\sum_{i=1}^{d}\sum_{\stackrel{l,k=1}{l\neq k}}^{d}(2\theta_{ii})\tr(a^{ii}(a^{kl})')(x)\theta_{kl}\\
&\qquad +
\sum_{\stackrel{i,j=1}{i\neq j}}^{d}\sum_{k=1}^{d}\theta_{ij}\tr(a^{ij}(a^{kk})')(x)(2\theta_{kk})
+
\sum_{\stackrel{i,j=1}{i\neq j}, \stackrel{k,l=1}{k\neq l}}^{d}\theta_{ij}\tr(a^{ij}(a^{kl})')(x)\theta_{kl}\\
&= \sum_{i,j,k,l=1}^d(^{D}\theta)_{ij} \tr\pare{a^{ij} (a^{kl})'}(x)(^{D}\theta)_{kl},
\end{split}
\end{equation*}
where the third identity follows from $a^{ij}=a^{ji}$, and $^{D}\theta\in \sdall$ is obtained by doubling all diagonal entries of $\theta$. Note that $\norm{\theta}^2 \leq \norm{^{D}\theta}^2 \leq 2\norm{\theta}^2$.  Therefore Assumption \ref{ass: coeff sd} $i)$ for
$A$ is equivalent to Assumption \ref{ass: coeff} $i)$ for $\hat{A}$. The equivalence between Assumption \ref{ass: coeff sd} $ii)$ and Assumption \ref{ass: coeff} $ii)$ can be proved similarly.

Now let us connect operators $\fF$ in \eqref{eq: F E} and \eqref{eq: F sd}.
Let $g$ be a smooth function on $\sd$ and define $\tilde{g}:E\rightarrow \Real$ by
$\tilde{g}(y) := g(x)$ where $x=\ell^{-1}(y)$.  Calculations show that $\partial_p\tilde{g}(y)= D_{I(p)}g(\ell^{-1}(y))$ when $I(p)$ is diagonal, or $(D_{I(p)} +  D_{I'(p)})g(\ell^{-1}(y))$ when $I(p)$ is off-diagonal.
It then follows that
\begin{equation*}
\begin{split}
\sum_{p=1}^{\td}\hat{B}_p(y)\partial_p\tilde{g}(y) &=\sum_{i=1}^{d}B_{ii}(x)D_{(ii)}g(x) +
\sum_{j=1}^{d}\sum_{i=1}^{j-1}B_{ij}(x)D_{(ij)}g(x) + \sum_{j=1}^{d}\sum_{i=1}^{j-1} B_{ij}(x)D_{(ji)}g(x)\\
&=\sum_{i,j=1}^{d}B_{ij}(x)D_{(ij)}g(x),
\end{split}
\end{equation*}
where the second identity above follows from $B_{ij} = B_{ji}$.  A similar (but longer) calculation using $a^{ij} = a^{ji}$ and
$\bar{A}_{(ij),(kl)} = \bar{A}_{(ji),(kl)} = \bar{A}_{(ij),(lk)}$ (cf. \eqref{eq: barA cond}) shows
\begin{equation*}
\begin{split}
\sum_{p,q=1}^{\td}
\left(\hat{a}\hat{a}'\right)_{pq}(y)\partial^2_{pq}\tilde{g}(y)& =
\sum_{i,j,k,l=1}^{d}\tr(a^{ij}(a^{kl})')(x)D^2_{(ij),(kl)}g(x),\\
\sum_{p,q=1}^{\td}\partial_p\tilde{g}(y)\hat{\bar{A}}_{pq}(y)\partial_{q}\tilde{g}(y)
&= \sum_{i,j,k,l=1}^{d}D_{(ij)}g(x)\bar{A}_{(ij),(kl)}(x)D_{(kl)}g(x).
\end{split}
\end{equation*}
Write $\hat{V}(y) = V(x)$ where $x = \ell^{-1}(y)$. The previous three identities combined yield $\fF[g](x) =
\fF[\tilde{g}](\ell(x))$.  Therefore, \eqref{eq: pde sd} and \eqref{eq: e-eqn sd} can be considered as special cases of \eqref{eq: pde E} and \eqref{eq: e-eqn E}, respectively.

\bibliographystyle{siam}
\bibliography{biblio}
\end{document}